\DeclareMathOperator\erf{erf}
\newcounter{descriptcount}
\newlist{enumdescript}{description}{1}
\setlist[enumdescript,1]{
  before={\setcounter{descriptcount}{0}
          \renewcommand*\thedescriptcount{\arabic{descriptcount}}},
        font={\bfseries\stepcounter{descriptcount}Case \thedescriptcount~}
}
\newtheorem{alg}{Algorithm}
\newtheorem{lemma}{Lemma}[section]
\newtheorem{theorem}[lemma]{Theorem}
\theoremstyle{definition}
\newtheorem{Remark}{Remark}[section]
\theoremstyle{plain}
\newtheorem{proposition}[lemma]{Proposition}
\numberwithin{equation}{section}
\title[Anticipating bifurcations through tails of stationary densities]{Anticipating bifurcations of random dynamical systems through tails of stationary densities}
\begin{document}

\author{Wei Hao Tey}

\address{Department of Mathematics, Imperial College London, 180 Queen's Gate, London, SW7 2AZ, United Kingdom.}

\address{International Research Center for Neurointelligence (IRCN), The University of Tokyo, 7-3-1 Hongo Bunkyo-ku, Tokyo, 
113-0033 Japan.}
\email{w.tey18@imperial.ac.uk}

\author{Guillermo Olic\'{o}n-M\'{e}ndez}
\address{Universit\"{a}t Potsdam, Institut f\"{u}r Mathematik, Karl-Liebknecht-Straße 24, 14476 Potsdam.}
\email{guillermo.olicon@uni-potsdam.de}

\author{Jeroen S.W. Lamb}

\address{Department of Mathematics, Imperial College London, 180 Queen's Gate, London, SW7 2AZ, United Kingdom.}

\address{International Research Center for Neurointelligence (IRCN), The University of Tokyo, 7-3-1 Hongo Bunkyo-ku, Tokyo, 
113-0033 Japan.}

\email{jsw.lamb@imperial.ac.uk}

\author{Kazuyuki Aihara}
\address{International Research Center for Neurointelligence (IRCN), The University of Tokyo, 7-3-1 Hongo Bunkyo-ku, Tokyo, 
113-0033 Japan.}
\email{kaihara@g.ecc.u-tokyo.ac.jp}

\keywords{
Random dynamical systems, bounded noise, bifurcation theory, critical transition, early-warning signal
}

\subjclass[2020]{
37M20, 37H20, 37B25
}

\begin{abstract}
    We develop an early-warning signal for bifurcations of one-dimensional random difference equations with additive bounded noise, based on the asymptotic behaviour of the stationary density near a boundary of its support. We demonstrate the practical use in numerical examples. 
\end{abstract}

\maketitle

\section{Introduction}

In recent years, there has been a growing interest in developing early warning signals for critical transitions in complex systems \cite{chen2012detecting,scheffer2009early,feng2024early}, referring to abrupt shifts from one dynamical regime to another at some critical threshold. Such transitions are also known as \textit{tipping points}~\cite{scheffer2001catastrophic,scheffer2009early} and are often locally irreversible. Early warning signals serve as precursors, offering advance notice allowing for the implementation of mitigation or adaptation strategies. Examples include the early treatment of pre-symptomatic diseases in medicine \cite{aihara2022dynamical}.

Much of the research in the theory of critical transitions and their early warning signals relies on simple low-dimensional stochastic differential equations with small noise \cite{Ashwinetal12,kuehn13}. Well-known early warning signals obtained in this context include increases in variance \cite{biggs2009turning,carpenter2006rising}, autocorrelation \cite{dakos2008slowing,held2004detection}, crosscorrelation\cite{aihara2022dynamical,chen2012detecting}, and skewness \cite{guttal2008changing}
of the observed data.

It is well-known that in practice, these early warning signals do not always apply \cite{scheffer2009early}. 
Indeed, different mechanisms for critical transitions (other than those in the prototypical SDEs) may arise, with different characteristics. This leads us to address the question which other types of early warning signals can be found in different modelling contexts.

In this paper, we consider random dynamical systems with bounded noise. These are
quite naturally motivated from the viewpoint of applications \cite{d2013bounded}, but less popular in the literature, due to a lack of readily available tools and methodologies.   
Nevertheless, in recent years more attention has been given to systems with bounded noise \cite{homburg2013bifurcations,olicon2021critical,tey2022minimal} and they are known to feature bounded attractors that trap orbits and support stationary distributions, see eg \cite{zmarrou2007bifurcations,lambhenon}. 

When system parameters are varied, attractors may undergo \textit{topological bifurcations}, going hand-in-hand with discontinuous changes of the support (and possibly numbers) of stationary distributions \cite{lamb2015topological}.
Such changes of invariant regions (and their associated statistics) enlarge or restrict the reach of long-term behaviours of orbits. In particular, they allow (or prevent) noise-induced excursions from certain regions of the state space to others.
As such, it is natural to view topological bifurcations of attractors in systems with bounded noise as fundamental to tipping point behaviour. 

The (topological) persistence and bifurcations of bounded noise attractors can be reformulated as a bifurcation problem of an associated set-valued dynamical system. The latter are notoriously hard to analyse, although some recent breakthrough was made by reformulating this problem in terms of a so-called finite-dimensional \textit{boundary map} that allows for the use of conventional bifurcation theoretical tools to approach this problem \cite{kourliouros2023persistence,lambhenon}.  

\begin{figure}[!b]

    \centering

    \begin{overpic}[width=.54\textwidth]{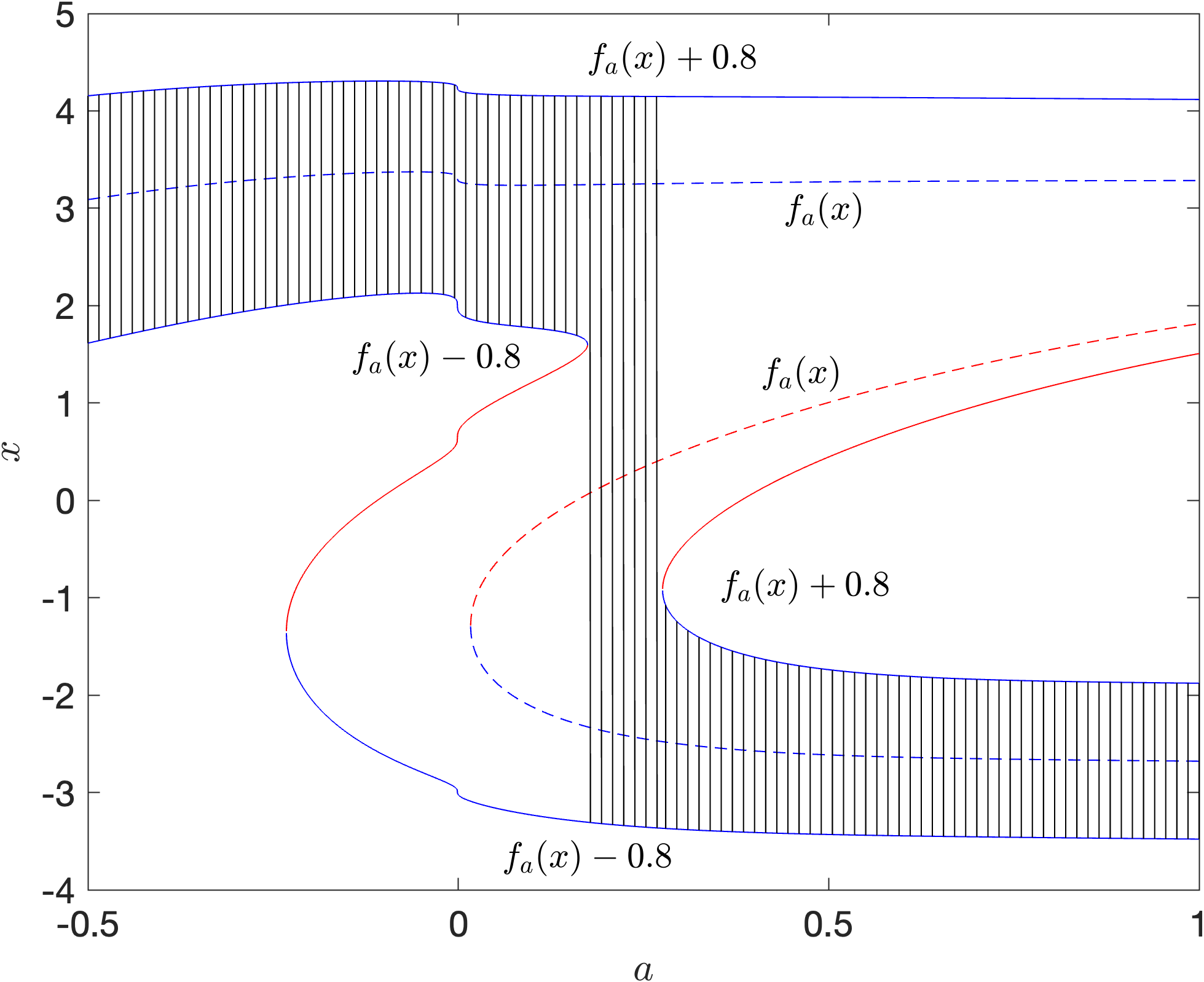}
    \put(0,0){
    (a)
    }\end{overpic}
    \begin{overpic}[width=.45\textwidth]{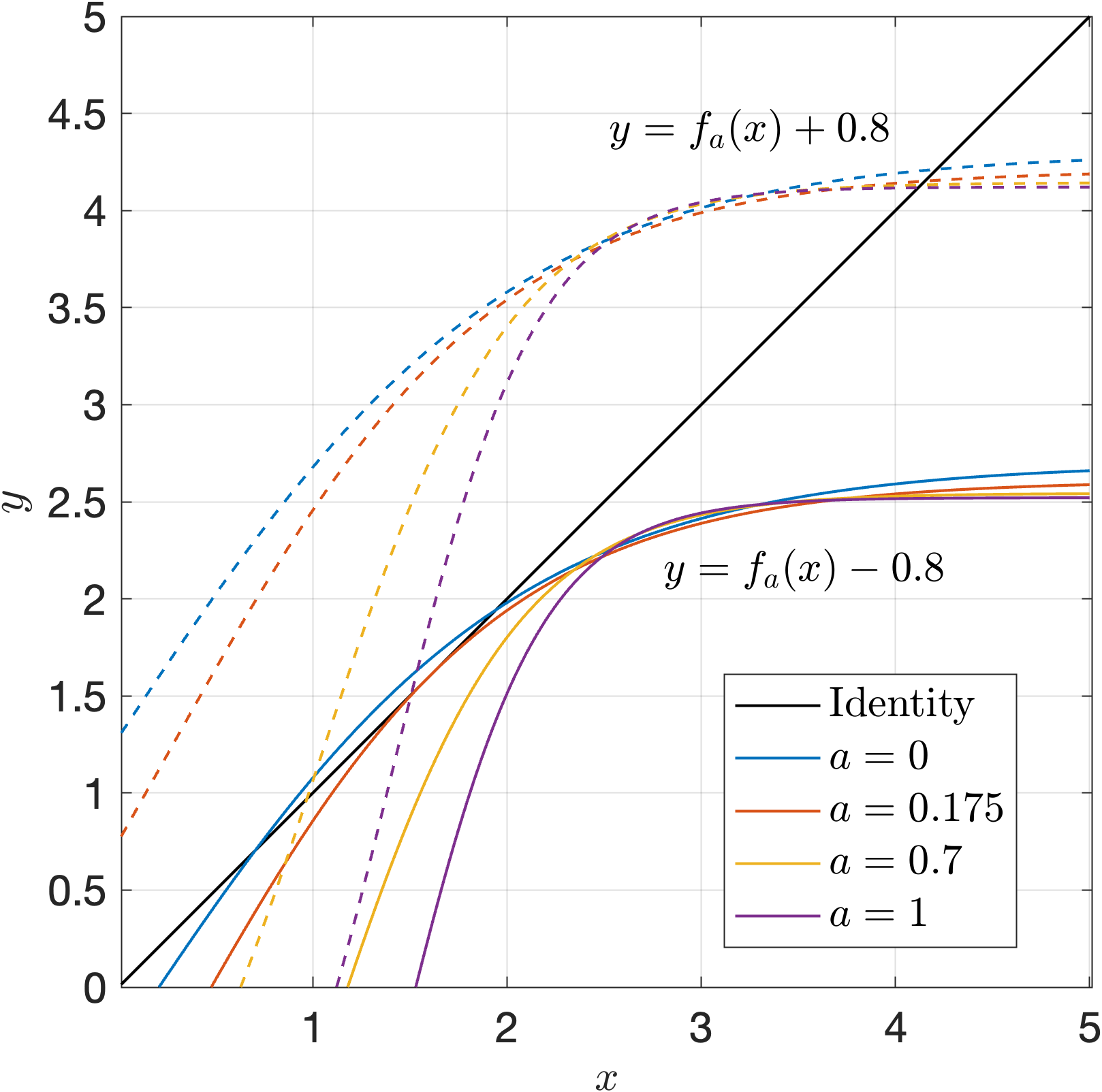}
    \put(0,0){
    (b)
    }
    \end{overpic}
    \captionsetup{width=\linewidth}\caption{Illustration of the one-parameter family of maps $f_a$, cf.~\eqref{eq:variance} in Appendix~\ref{appen:variance_decrease}, together with the upper and lower extremal maps $f_a(x) \pm 0.8$. In (a), the bifurcation diagram of $f_a(x)$ for $-0.5<a<1$ is shown as a dotted line, while those of the extremal maps are plotted as solid lines. The blue lines signify stable fixed points, the red lines represent unstable fixed points, and the black vertical lines are minimal invariant intervals for selected values of $a$.
    In (b), the extremal maps for parameter values $a = 0, \;0.175, \;0.7, \;1$ are portrayed in dotted and solid lines, respectively. There is a topological bifurcation of the minimal invariant interval around $a\approx 0.175$, represented by a fold bifurcation of the lower extremal map, that is when $f_a(x) - 0.8$ is tangential to the diagonal, portrayed in black in (b). The bifurcation can be observed in (a), where the minimal invariant interval changes discontinuously to a larger interval. After the bifurcation, random trajectories can alternate between the two metastable states inside the enlarged minimal invariant interval, in a so-called flickering phenomenon.}
    \label{fig:modified_tanh}
    \end{figure}

In this paper, we consider the simplest setting of one-dimensional diffeomorphisms with bounded additive noise, where the analysis of topological bifurcations is relatively straightforward via the evaluation of so-called \textit{extremal maps}. 
In such systems, the tail behaviour of the stationary distribution near the boundary of an attractor has recently been characterised \cite{olicon2021critical,olicon24tail} and in this paper, we employ these results to formulate new early warning signals for topological bifurcations.

    \begin{figure}[!b]
    \centering
    \includegraphics[width=.5\textwidth,trim=.8 .8 .8 .8,clip]{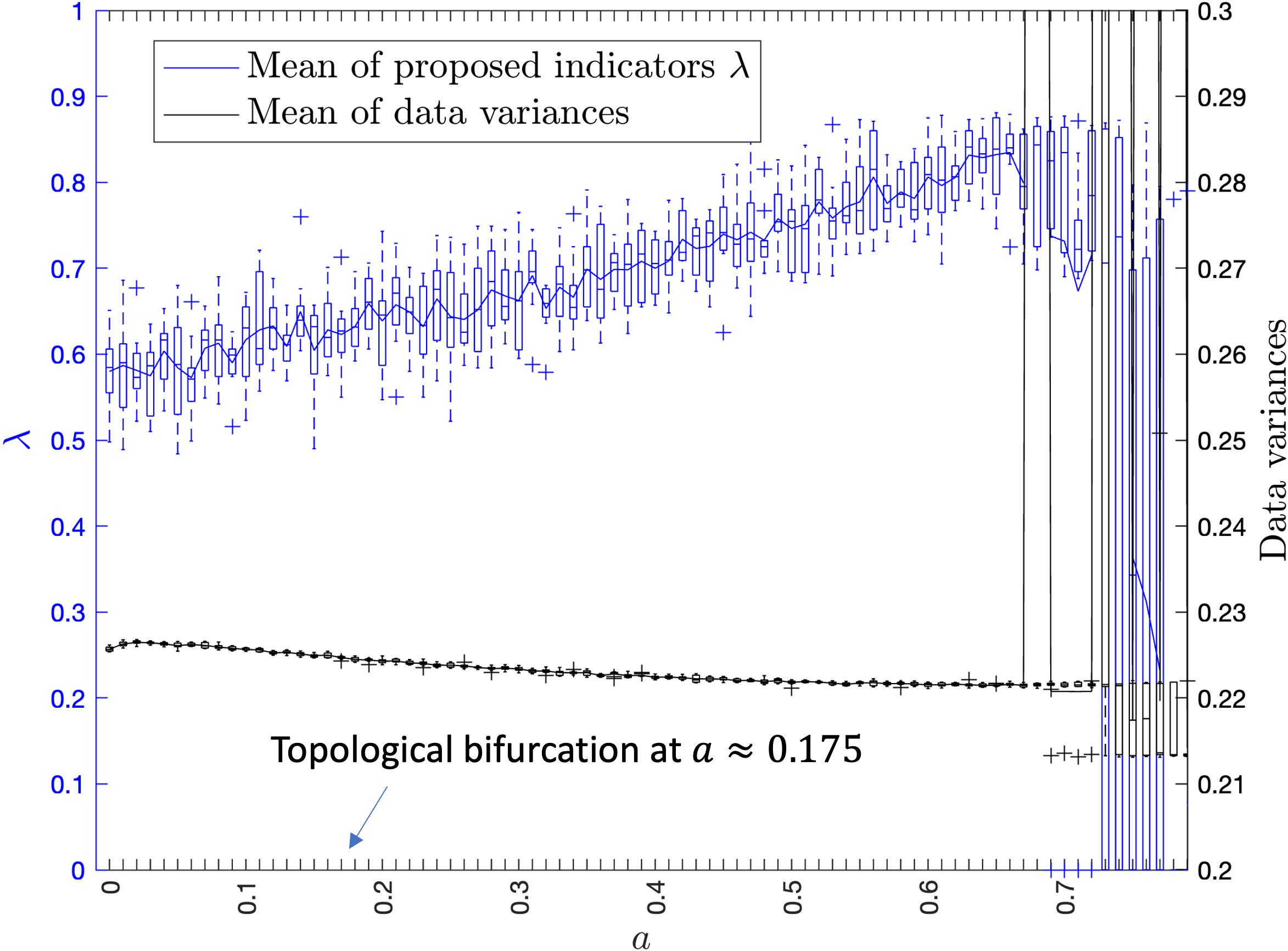}
    \captionsetup{width=\linewidth}\caption{ 
    Results of numerical simulations of the sample variance and our proposed new early warning signal $\hat{\lambda}$ (where $\hat{\lambda}$ is an estimator of the bifurcation parameter $\lambda$, which in turn approaches $1$ as $a\rightarrow a_*$).
    We simulate the random map $y_{t+1} = f_a(y_t) + \xi_t$, given in \eqref{eq:variance} in Appendix~\ref{appen:variance_decrease}, with noise amplitude $|\xi_t|\leq 0.8$ (as in Figure~\ref{fig:modified_tanh}) while varying $a$. This system has a topological bifurcation at $a_*\approx 0.175$, where an attractor explodes when $a$ passes through $a_*$ from below. 
    Starting from $a=0$, a time series of length $10^6$ is generated inside the attractor, after which $a$ is increased by $0.01$, and the time series is continued for another $10^6$ iterates, after which $a$ is increased again, and so on, until $a=0.8$. 
    Ten such time series are generated and we present the measured variances and their mean at each considered fixed value of $a$, above, in black, showing a slowly decreasing trend until the system tips (escapes from the remnant of the attractor). 
    In the same graph, we plot our new indicator $\lambda$, in blue. 
    Boxes represent the interquartile range (25$^{\text{th}}$ to 75$^{\text{th}}$ percentile), with whiskers extending to all data points, except for outliers marked with crosses.} 
    \label{fig:variance}
\end{figure}

We illustrate our result in the context of a concrete example.
Consider the one-parameter family of one-dimensional mappings with additive bounded noise, $y_{t+1} = f_a(y_t) + \xi_t, \; \vert\xi_t\vert \leq 0.8$ for $t \in \mathbb{N}$ and $\{\xi_t\}_{t \in \mathbb{N}}$ are random samples from a uniform distribution in $[-0.8,0.8]$,
with $f_a$ as given in Appendix~\ref{appen:variance_decrease}. The extremal maps $f_a(x) \pm 0.8$ and the bifurcation diagrams are depicted in Figure~\ref{fig:modified_tanh}. When $a \approx 0.175$, the lower extremal map $f_a(x) - 0.8$ develops a tangency with the diagonal (graph of the identity map), corresponding to a fold-bifurcation of fixed points of this lower extremal map, see Figure~\ref{fig:modified_tanh}(b).
This causes the random system to undergo a topological bifurcation where an attractor (the invariant interval bounded from above by the $x$-value where the graph of the upper extremal map intersects the diagonal and from below by the $x$-value where the lower extremal map does so) explodes to a larger region, see Figure~\ref{fig:modified_tanh}(a).

This example is chosen such that the variance of random variables distributed according to the stationary distribution 
does not increase when the parameter $a$ approaches the topological bifurcation point $a_* \approx 0.175$. Indeed, in Figure~\ref{fig:variance}, 
the sample variance of a time series of length $n = 10^6$ is measured for each distinct value of $a$, where the variance is shown to be almost constant (and in fact slightly declining) as $a$ approaches $a_*$ from below. For larger values $a$ arbitrarily close to $a_*$, the attractor explodes and with it also the associated stationary distribution. However, the escape time from the remnant of the attractor is still very long as it is metastable, so one does not typically see the collapse of the attractor from a time series until for much higher values of $a$ (around $0.68$ in this example). We note the similarity of this scenario with the recent suggestion \cite{drijfhout2025shutdown} that the Atlantic Meridional Overturning Circulation (AMOC) is expected to collapse well after passing a tipping point. More slowly changing $a$ would bring the observed tipping point closer to the topological bifurcation point. 
We note that asymptotics of the mean escape time from the remnant of the attractor, close to the bifurcation point, have been obtained in \cite[Chapter 5]{olicon2021critical}. 
This example thus illustrates that variance is not always a reliable early warning signal for tipping.

In this paper, we propose a new early warning signal, estimating the derivative of the extremal map at the fixed point, denoted as $\lambda\in(0,1)$. This is based on recent results on the asymptotic statistical behaviour of orbits near the boundary of an attractor of one-dimensional random diffeomorphisms \cite{olicon2021critical,olicon24tail}.
The derivative $\lambda$ approaches $1$ from below as the system approaches to the topological bifurcation.

To estimate $\lambda$, we fit a $\lambda$-dependent theoretical prediction of the associated asymptotic behaviour of the tail of the stationary density to the tail learned from data, according to \cite{olicon24tail} and Theorem~\ref{THM:main} below. 
In Figure~\ref{fig:variance}, we show that this early warning signal can be successful where the variance fails. In this example, the numerical estimation of $\lambda$ tends to be lower than the theoretical value, close to the bifurcation point. Rather than the precise value of $\lambda$, its increase is an early warning signal for the topological bifurcation. See also Figure~\ref{fig:variance_decrease_ex_result}(b) in Appendix~\ref{APPENDIX:error_discussion}, which shows the indicators obtained from the stationary density approximations using Ulam's method. Further details and discussions of the numerical algorithm are provided in Section~\ref{sec:tail}.

\subsection{Set-up and main results}
Let $f:\mathbb R \to \mathbb R$ be an order-preserving $C^2$-diffeomorphism.
We consider a one-dimensional random dynamical system defined by the recurrence relation:
\begin{equation}\label{eq:rand_diff}
    y_{t+1} = f(y_t) + \xi_t,\; t \in \mathbb{N}
\end{equation}

\noindent where $(\xi_t)_{t \in \mathbb N}$ are i.i.d. random variables 
with a continuously differentiable density $p:[-\varepsilon,\varepsilon]\rightarrow\mathbb{R}_+$ such that $p(\pm\varepsilon)>0$. In a bounded noise model, trajectories of random system \eqref{eq:rand_diff} may be attracted and confined to a 
\textit{minimal invariant interval}\footnote{An interval $M$ is invariant for \eqref{eq:rand_diff}, if it is compact and $M=\overline{B_{\varepsilon}(f(M))}:=\{ f(x)+\xi : x\in M, \ \xi\in [-\varepsilon,\varepsilon] \}$. A compact interval is a minimal invariant set, if it is invariant for \eqref{eq:rand_diff}, and does not contain a proper invariant subset.} $M=[x_-,x_+]$, from which escape is not possible. Such an interval constitutes the support of a stationary distribution $\mu$. 
We refer the reader to \cite{Kifer86} for a general discussion of stationary distributions of random dynamical systems in discrete-time, and \cite{araujo2000attractors, olicon2021critical,zmarrou2007bifurcations} for our specific setting. Moreover, $\mu$ is absolutely continuous with respect to the Lebesgue measure $\textup{Leb}$, and we denote $\phi(x)$ as its density.

A topological bifurcation of a minimal invariant interval is characterised by discontinuous changes of its boundary points, when either the magnitude $\varepsilon$ or the parameters in $f$ are altered \cite{lamb2015topological}. We denote by $f_{\pm}(x):=f(x)\pm \varepsilon$ the extremal maps, representing one iterate of the random system (\ref{eq:rand_diff}) under extremal noise. The boundary points $x_{\pm}$ of a minimal invariant interval are fixed points of the extremal maps, i.e. $f_{\pm}(x_\pm) = x_\pm$, since $f$ is an order-preserving diffeomorphism.
Hence, for a topological bifurcation of a minimal invariant interval to occur, a fold bifurcation of the extremal map must take place at the corresponding boundary point~\cite{kuehn2018early}.
Without loss of generality, we assume this occurs at the left boundary $x_-$. At this bifurcation point, the derivative of the extremal map at the boundary equals one.
In this case, the parameter $\lambda=f_-'(x_-)$ serves as an indicator of proximity to a topological bifurcation: the system approaches the bifurcation as $\lambda$ tends to 1.

Our main objective is to infer $\lambda$ from a time series produced by \eqref{eq:rand_diff}. Its proximity to $1$ or its increasing values serve as a warning signal of the topological bifurcation. 
Assuming $x_-$ is a hyperbolic fixed point, i.e. $\lambda < 1$, our method utilises an asymptotic expansion of the tail of the stationary density 
as $x\rightarrow x_-$, given by
\begin{equation}
\label{eq:higher_order}
\log\phi(x)=\frac{1}{2\log\lambda}\log^2(x-x_-)-\frac{2}{2\log\lambda} \log(x-x_-)\cdot \log\log\left( \frac{1}{x-x_-}\right) + \mathcal{O}(\log(x-x_-)),
\end{equation}
as derived in Theorem~\ref{THM:main} below. The leading order term of the asymptotic expansion was first obtained in \cite{olicon24tail}, and is given by

\begin{equation}
\label{eq:first_order}
    \log\phi(x)= \frac{1}{2\log\lambda}\log^2(x-x_-) + o\left(\log^2(x-x_-)\right).
\end{equation}
Therefore, given the boundary point $x_-$ and $\phi(x)$, for several values of $x$ close to $x_-$, one can aim to infer $\lambda$.

The paper is structured as follows: In Section~\ref{sec:prelim}, we derive the
asymptotic expansion near the tail of the stationary density, as given in \eqref{eq:higher_order}. Section~\ref{sec:tail} introduces fitting methods motivated by the asymptotic expansions for estimating $\lambda$ from time series of the random difference equation (\ref{eq:rand_diff}). In Section~\ref{sec:results}, we apply these methods to data generated from both linear and nonlinear maps with additive bounded noise. The results are presented in two settings: i) when the true boundary $x_-$ is assumed to be known (Section~\ref{sec:known}), and ii) a more realistic scenario where the boundary $x_-$ is unknown and must be estimated from the time series (Section~\ref{sec:unknown}).

\section{Asymptotic expansion near tail of stationary density}\label{sec:prelim}

In this section, we derive the asymptotic expansion of the stationary density near the boundary of its support. The construction is based on the transfer operator associated with the random system (\ref{eq:rand_diff}). For further background on transfer operators and their role in random dynamical systems, we refer the reader to \cite{zmarrou2007bifurcations,olicon2021critical}.

Consider a stationary distribution $\mu$ of \eqref{eq:rand_diff} supported on a compact interval $M=[x_-,x_+]$, and let $\phi$ be its corresponding density, so that $\phi$ is a fixed point of the so-called \textit{transfer operator} $L:L^1(M) \to L^1(M)$, where $L^1(M)=\{ g:M\rightarrow \mathbf{R}_+ : \int_M\vert g(x)\vert dx<\infty\}$. This operator defines the evolution of densities induced by the dynamics of \eqref{eq:rand_diff}. 
Let $k:M\times M \to \mathbb{R}_+$ be the density of the transition probability of \eqref{eq:rand_diff} in the interval $M$, then the transfer operator is given by
\cite{zmarrou2007bifurcations}:
\[
    Lg(x):=\int_M{k(y,x)g(y)dy}.
\]
Recall that $p:[-\varepsilon,\varepsilon] \to \mathbb{R}_+$ denotes the probability density of the random noise. The density $k$ reads $k(y,x) = \tfrac{1}{\varepsilon}p(x-f(y))$. For a fixed $x_0$ sufficiently close to the boundary point $x_-$ and any $x\in[x_-,x_0]$, the stationary density satisfies \cite[Chapter 2.4]{olicon2021critical}
\begin{equation}
    \label{eq:phi_near_0}
    \phi(x)=\frac{1}{\varepsilon}\int_{x_-}^{f_-^{-1}(x)}{p(x-f(y))\phi(y)dy}.
\end{equation}
Since the stationary density appears again in the integrand, one can iterate this process $n$ times, as long as $f_-^{-n+1}(x)\in (x_-,x_0)$. Equivalently, we can repeat this process $n_{x_0}^x- 1$ times, where $n_{x_0}^x$ is the first time $f^{n_{x_0}^x}_-(x_0)$ crosses $x$, i.e. 
\begin{equation}
    \label{eq:hitting_time_det}
    n_{x_0}^x:=\min\{n\geq 0 : f_-^n(x_0)<x\}.
\end{equation}
By means of this iterative process, one obtains the following bounds of $\phi$.
\begin{lemma}
\label{LEMMA:main}
    Let $M=[x_-,x_+]$ be the support of a stationary distribution of \eqref{eq:rand_diff}, and assume that $f'_-$ is monotone near $x_-$, $\lambda = f'_-(x_-)<1$ and $p(-\varepsilon)>0$.
    Then, there exists $x_0$ sufficiently close to $x_-$ and constants $C_1,C_2>0$ such that 
    \begin{itemize}
        \item the stationary density $\phi:M\rightarrow\mathbb{R}_+$ is increasing in $[x_-,x_0]$, and
    \item for each $x\in(0,x_0)$, the stationary density satisfies
    \begin{equation}
        \label{eq:main_ineqs}
        \begin{split}
        \frac{C_1^n}{(n-1)!}\left[ \prod_{m=2}^n \prod_{j=2}^m \frac{1}{f_-'(f_-^{-j}(x))} \right]\int_{x_-}^{f_-^{-1}(x)}(f_-^{-1}(x)-y)^{n-1}\phi\left( f_-^{-n+1}(y)\right)dy \leq \\
        \phi(x) \leq 
        \frac{C_2^n}{n!}\left( 
 \frac{1}{\lambda}\right)^{\frac{n(n-1)}{2}} \vert f_-^{-1}(x)\vert^n \phi(x_0),
        \end{split}
    \end{equation}
where $n\equiv n_{x_0}^x$.
    \end{itemize}
\end{lemma}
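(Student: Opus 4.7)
My plan is to prove the two claims of Lemma~\ref{LEMMA:main} separately: the monotonicity of $\phi$ on $[x_-,x_0]$, and the two-sided bound \eqref{eq:main_ineqs}. Both rest on iterating the integral representation \eqref{eq:phi_near_0}.

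I would begin with monotonicity. Note that $\phi(x_-)=0$ follows directly from \eqref{eq:phi_near_0}, since $f_-^{-1}(x_-)=x_-$ makes the integration range degenerate. Differentiating \eqref{eq:phi_near_0} with respect to $x$ gives
\begin{equation*}
\phi'(x)=\frac{p(-\varepsilon)}{\varepsilon}(f_-^{-1})'(x)\phi(f_-^{-1}(x))+\frac{1}{\varepsilon}\int_{x_-}^{f_-^{-1}(x)}p'(x-f(y))\phi(y)\,dy,
\end{equation*}
because $x-f(f_-^{-1}(x))=-\varepsilon$. The first term is non-negative; to argue it dominates, I would use continuity of $p$ near $-\varepsilon$ to conclude $p(x-f(y))\geq p(-\varepsilon)/2$ on the relevant domain for $x_0$ small enough, so that \eqref{eq:phi_near_0} yields $\int_{x_-}^{f_-^{-1}(x)}\phi(y)\,dy\leq (2\varepsilon/p(-\varepsilon))\phi(x)$, bounding the second term by a constant multiple of $\phi(x)$. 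Bootstrapping the inequality $\phi(f_-^{-1}(x))\gtrsim \phi(x)$ (obtained by one further application of \eqref{eq:phi_near_0}, using that the integration range $[x_-,f_-^{-1}(f_-^{-1}(x))]$ strictly contains $[x_-,f_-^{-1}(x)]$) shows that the first term dominates the second once $x_0$ is small, so that $\phi'(x)>0$.

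For the two-sided inequality I would iterate \eqref{eq:phi_near_0}. Substituting $\phi$ into itself $n$ times yields the nested representation
\begin{equation*}
\phi(x)=\frac{1}{\varepsilon^n}\int_{R_n}\prod_{k=1}^n p(y_{k-1}-f(y_k))\,\phi(y_n)\,dy_n\cdots dy_1,\qquad y_0=x,
\end{equation*}
with $R_n=\{(y_1,\ldots,y_n):x_-\leq y_k\leq f_-^{-1}(y_{k-1})\}$. For the upper bound, I would choose $n=n_{x_0}^x$ so that $y_{n-1}\leq f_-^{-(n-1)}(x)\leq x_0$, which together with the just-proved monotonicity gives $\phi(y_n)\leq \phi(x_0)$ at the innermost step. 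Bounding each $p$ by $P_{\max}$ and using the mean-value estimate $f_-^{-1}(y)-x_-\leq \lambda^{-1}(y-x_-)$ for $y$ near $x_-$ (justified because $f_-'$ is monotone with $f_-'(x_-)=\lambda$) turns the volume of $R_n$ into nested integrals of the form $\int_0^h\int_0^{u_1/\lambda}\cdots$ that evaluate to $h^{n-1}/\bigl((n-1)!\,\lambda^{n(n-1)/2}\bigr)$. Absorbing the additional $1/\lambda$ factors into $|f_-^{-1}(x)|^n$ gives the stated upper bound. For the lower bound I would apply $p\geq P_{\min}>0$ in a neighbourhood of $-\varepsilon$ and, after each iteration of \eqref{eq:phi_near_0}, perform the change of variables $z=f_-(y)$ in the outer integral, producing a Jacobian of $1/f_-'(f_-^{-j}(x))$ at the $j$-th level and successively sweeping out the polynomial factor $(f_-^{-1}(x)-y)^{n-1}/(n-1)!$. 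Aggregating the Jacobians over the $n$ iterations yields precisely the double product $\prod_{m=2}^n\prod_{j=2}^m 1/f_-'(f_-^{-j}(x))$ that appears in the statement.

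The main obstacle I anticipate is the careful bookkeeping of the iteration: extracting the exact factorials, the geometric-sequence factor $\lambda^{-n(n-1)/2}$, and the nested product of $f_-'$-values in the lower bound requires controlling the nonlinearity of $f_-$ via the hypothesis that $f_-'$ is monotone, so that deviations from its linearization at $x_-$ contribute only bounded multiplicative corrections that can be absorbed into the constants $C_1,C_2$. Establishing monotonicity of $\phi$ without invoking the asymptotic expansion being derived later in the paper is the other delicate step, requiring the bootstrap sketched above.
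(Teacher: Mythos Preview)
Your overall strategy matches the paper's: differentiate \eqref{eq:phi_near_0} for monotonicity, then iterate it for the two-sided bound, using a change of variables to extract the double product of derivatives in the lower bound and a volume estimate for the upper bound. The bookkeeping you anticipate is indeed the main work, and your description of the lower-bound iteration (change of variables $z=f_-(y)$, Jacobian $1/f_-'(f_-^{-j}(x))$, polynomial factor building up to $(f_-^{-1}(x)-y)^{n-1}/(n-1)!$) is exactly what the paper does.

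There is, however, a genuine gap in your monotonicity argument. After writing $\phi'(x)=B_1(x)\phi(f_-^{-1}(x))-\text{(term bounded by }B_2(x)\phi(x))$, you need $B_1(x)\phi(f_-^{-1}(x))>B_2(x)\phi(x)$. Your proposed bootstrap---comparing $\phi(f_-^{-1}(x))$ to $\phi(x)$ via containment of integration ranges---only yields $\phi(f_-^{-1}(x))\geq (p_{\min}/p_{\max})\,\phi(x)$, a fixed constant $\leq 1$, which does not force domination. What is actually needed is that the ratio $\phi(f_-^{-1}(x))/\phi(x)$ \emph{diverges} as $x\to x_-$. The paper obtains this by iterating \eqref{eq:phi_near_0} twice to reach $\phi(x)\leq C_2^2\int_{x_-}^{f_-^{-2}(x)}(f_-^{-1}(x)-f_-(z))\phi(z)\,dz$, then bounding the factor $(f_-^{-1}(x)-f_-(z))\leq f_-^{-1}(x)-x_-$ and, crucially, recognising the remaining integral $\int_{x_-}^{f_-^{-2}(x)}p(f_-^{-1}(x)-f(y))\phi(y)\,dy$ as $\varepsilon\,\phi(f_-^{-1}(x))$ via \eqref{eq:phi_near_0} itself. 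This gives $\phi(x)\leq C\,(f_-^{-1}(x)-x_-)\,\phi(f_-^{-1}(x))$, so the ratio blows up like $1/(f_-^{-1}(x)-x_-)$, and the argument closes without circularity.

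Two minor points: in your upper bound, monotonicity on $[x_-,x_0]$ controls $\phi(y_{n-1})$, not $\phi(y_n)$, since $y_n$ may range up to $f_-^{-n}(x)>x_0$; either iterate one fewer time or bound $\phi$ on the fixed interval $[x_0,f_-^{-1}(x_0)]$. Also, your estimate $f_-^{-1}(y)-x_-\leq \lambda^{-1}(y-x_-)$ requires $f_-'$ increasing near $x_-$; the paper makes the same WLOG assumption, so this is fine, but should be stated.
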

Lemma~\ref{LEMMA:main}   is a slight improvement of \cite[Lemma 3.1(a)]{olicon24tail}. 
We refer to the proof in Appendix~\ref{APPENDIX:proof}.
It establishes the asymptotic expansion of the stationary density in (\ref{eq:higher_order}), which we restate here formally:
\begin{theorem}
    \label{THM:main}
    Let $M=[x_-,x_+]$ be the support of a stationary distribution of the random difference equation with additive bounded noise in \eqref{eq:rand_diff}, and assume that $f_-'$ is monotone near $x_-$, $\lambda = f'_-(x_-)<1$ and the noise density $p$ is such that $p(-\varepsilon) > 0$. Then, the asymptotic expansion of the stationary density $\phi$ satisfies (\ref{eq:higher_order}) as $x\rightarrow x^+_-$.
    
\end{theorem}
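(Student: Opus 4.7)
My plan is to extract the asymptotic expansion directly from the two-sided bounds of Lemma~\ref{LEMMA:main} by taking logarithms, substituting a precise asymptotic for the hitting time $n=n_{x_0}^x$, and applying Stirling's formula. Since $\lambda=f_-'(x_-)\in(0,1)$ and $f$ is $C^2$, the contraction at $x_-$ gives $f_-^n(x_0)-x_-=\kappa(x_0)\,\lambda^n(1+o(1))$ for some $\kappa(x_0)>0$, so solving $f_-^n(x_0)\approx x$ yields $n_{x_0}^x=\log(x-x_-)/\log\lambda+\mathcal{O}(1)$, together with $\log|f_-^{-1}(x)-x_-|=\log(x-x_-)-\log\lambda+o(1)$ as $x\to x_-^+$.

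Next, I would take the logarithm of the upper bound in~\eqref{eq:main_ineqs}: using $\log n!=n\log n-n+\mathcal{O}(\log n)$,
$$\log\phi(x)\le \tfrac{n(n-1)}{2}\log(1/\lambda)+n\log|f_-^{-1}(x)-x_-|-\log n!+\mathcal{O}(n).$$
Substituting $n=\log(x-x_-)/\log\lambda+\mathcal{O}(1)$ into the two quadratic contributions gives
$$-\tfrac{n^2}{2}\log\lambda+n\log(x-x_-)=\frac{1}{2\log\lambda}\log^2(x-x_-)+\mathcal{O}(\log(x-x_-)),$$
while the dominant Stirling piece yields
$$-n\log n=-\frac{1}{\log\lambda}\log(x-x_-)\cdot\log\log\tfrac{1}{x-x_-}+\mathcal{O}(\log(x-x_-)),$$
reproducing exactly the two leading terms of~\eqref{eq:higher_order}.

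For the matching lower bound, Taylor expansion $f_-'(y)=\lambda+\mathcal{O}(y-x_-)$ combined with the geometric closeness of $f_-^{-j}(x)$ to $x_-$ gives $\prod_{m=2}^{n}\prod_{j=2}^{m} f_-'(f_-^{-j}(x))^{-1}=\lambda^{-n(n-1)/2}\,e^{\mathcal{O}(n)}$. I would bound the integral in~\eqref{eq:main_ineqs} from below by restricting to $[f_-^{-1}(x)-\delta,\,f_-^{-1}(x)]$ with $\delta$ proportional to $f_-^{-1}(x)-x_-$: on this subinterval $f_-^{-n+1}$ maps into a compact set bounded away from $x_-$ (since $f_-^{-n}(x)-x_-\asymp\kappa(x_0)$ by the hitting-time calibration), so $\phi\circ f_-^{-n+1}\ge c>0$ there by continuity and interior positivity, giving an integral $\gtrsim(f_-^{-1}(x)-x_-)^n/n$. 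Taking logarithms reproduces the same two leading terms (the discrepancies $(n-1)!$ versus $n!$ and the extra $-\log n$ only shift the $\mathcal{O}(\log(x-x_-))$ remainder), so squeezing the two estimates yields~\eqref{eq:higher_order}.

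The most delicate point will be checking that every multiplicative error --- in particular the $e^{\mathcal{O}(n)}$ factor from the double product, the choice of $\delta$ in the lower-bound integral, and the $\mathcal{O}(1)$ slack in the hitting-time linearization --- contributes only $\mathcal{O}(\log(x-x_-))$ to $\log\phi(x)$; this is essentially careful bookkeeping that exploits $C^2$-regularity of $f$ and the monotonicity of $\phi$ near $x_-$ from Lemma~\ref{LEMMA:main}. Extracting the $\log\cdot\log\log$ correction with the exact coefficient $-1/\log\lambda$ from Stirling's formula is what upgrades the first-order bound~\eqref{eq:first_order} to the genuine second-order expansion~\eqref{eq:higher_order}.
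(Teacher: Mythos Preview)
Your proposal is correct and mirrors the paper's proof: take logarithms of the two bounds in Lemma~\ref{LEMMA:main}, insert the hitting-time asymptotic $n=\log(x-x_-)/\log\lambda+\mathcal{O}(1)$ together with Stirling's formula to extract both the $\log^2$ and the $\log\cdot\log\log$ terms, and for the lower bound restrict the integral to a subinterval of length comparable to $f_-^{-1}(x)-x_-$ (the paper uses $[x,f_-^{-1}(x)]$ and the monotonicity of $\phi$ from Lemma~\ref{LEMMA:main} rather than interior positivity, but this is cosmetic). One caveat on the double-product estimate you flagged as delicate: the phrase ``geometric closeness of $f_-^{-j}(x)$ to $x_-$'' is slightly misleading since for $j$ near $n$ the iterate $f_-^{-j}(x)$ sits near $x_0$, not near $x_-$; what actually makes the bound work is that such $j$ carry small multiplicity $n-j+1$ in the double sum, so that $\sum_{j=2}^{n}(n-j+1)\bigl(f_-^{-j}(x)-x_-\bigr)\asymp\sum_{k\ge 0}(k+1)\lambda^{k}=\mathcal{O}(1)$, and hence your claimed $e^{\mathcal{O}(n)}$ is (more than) valid --- the paper reaches the same $\mathcal{O}(\log x)$ conclusion via a Riemann-sum comparison.
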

The proof of Theorem~\ref{THM:main} is deferred to Appendix~\ref{APPENDIX:proof}.

\section{Tail fitting and bifurcation parameter estimation}\label{sec:tail}

We propose a numerical algorithm to approximate $\lambda=f_-'(x_-)$ from time series of the random system (\ref{eq:rand_diff}), by fitting the data to the asymptotic expansion of the stationary density \eqref{eq:higher_order}.

\subsection*{Least squares curve fitting}

Consider a time series dataset $(y_t)_{1 \leq t \leq n}$ obtained from the random difference equation (\ref{eq:rand_diff}).
We first approximate the stationary density $\phi$ with a piecewise constant function $\hat{\phi}$ using the normalised histogram of the time series with equally sized bins. Let $b\in\mathbb{N}$ be a hyperparameter denoting the number of bins, with edges $\{\min_{1\leq t \leq n}(y_t) = z_0<z_1<\cdots<z_b = \max_{1\leq t \leq n}(y_t)\}$, and $\Delta z:=\frac{z_b-z_0}{b}$ 
the bin width, where $z_{i+1}=z_i+\Delta z$ for $i\leq b-1$. We consider the midpoints\footnote{One may also take $x_i$ to be either the left or right edge of the bin, i.e. $x_i = z_i$ or $x_i = z_{i+1}$. The choice can affect the estimation of $\lambda$, particularly when the bin width $\Delta z$ is relatively large.} given by $x_i=\frac{z_{i-1}+z_{i}}{2}$ for $1\leq i\leq b$ and let $(h_i)_{1\leq i \leq b}$ denote the heights of the histogram.

The asymptotic expansion of the stationary density $\phi(x)$ in (\ref{eq:higher_order}) depends on both the bifurcation parameter $\lambda$ and the boundary $x_-$. When the boundaries $x_-,x_+$ are unknown, we adopt boundary estimates $\hat{x}_-,\hat{x}_+$, such that
\begin{equation}\label{eq:boundary_estimates}
    \hat{x}_- = x_1 - \Delta z, \; \hat{x}_+ = x_b + \Delta z. 
\end{equation}
More refined methods for boundary estimation, which are crucial for determining the bifurcation parameter $\lambda$, are beyond the scope of this paper.

Formally, the histogram approximation of the stationary density can be expressed as a step function $\hat{\phi}$, defined as
\begin{equation}
    \label{eq:approx_density}
    \begin{split}
    \hat{\phi}: [\hat{x}_-,\hat{x}_+]\rightarrow\mathbb{R}_+, & \quad \hat{\phi}(x)= \sum_{i=1}^{b} h_i\mathds{1}_{[z_{i-1},z_i]}(x), \\
    h_i  := & \frac{1}{\Delta z\cdot n}\sum_{j=1}^{n}\mathds{1}_{[z_{i-1},z_i]}(y_j).
    \end{split}
\end{equation}

Let $q\in (0,1)$ be the quantile level (considered in the following to be a hyperparameter), and let $b_l$ be the q-quantile of $\hat{\phi}$, i.e
\begin{equation}
    \label{eq:b_l}
    b_l:= \max \left\{ 1\leq j\leq b : \sum_{i=1}^j h_i < q \right\}.
\end{equation}
The left tail of the stationary density is thus approximated by $\hat{\phi}(x)$ for $\hat{x}_- \leq x\leq x_{b_l}$.

The asymptotic expansion of $\phi(x)$ as $x \to x_-$, in (\ref{eq:first_order}) with its leading order term and (\ref{eq:higher_order}) with an additional higher order term, shows that the bifurcation parameter $\lambda$ can, in principle, be recovered from sample points of $x$ and $\phi(x)$ near the boundary $x_-$. 

In particular, the leading order expansion in (\ref{eq:first_order}) indicates that $\log\phi(x_i)\approx\log\hat{\phi}(x_i) = \log(h_i)$ admits quadratic behaviour
as a function of $\log(x_i-x_-)$.
Our first approach for estimating $\lambda$ is to fit a quadratic curve, consisting only of the linear and quadratic terms\footnote{One can include a constant term $a_0$, or set $a_1 = 0$ in \eqref{eq:quad_method}, but these can cause the $\hat{\lambda}$ approximation to be unstable. Hence, we restrict ourselves to $a_0=0$.}, by minimising the least squares error:
\begin{equation}\label{eq:quad_method}
    \min_{a_1 \in \mathbb{R}, a_2<0}\left(\sum_{i=1}^{b_l}(a_2l_i^2 + a_1l_i- \log(h_i))^2\right),\; l_i = \log(x_i-\hat{x}_-)
\end{equation}
where $\lambda$ can be estimated by 
\begin{equation}
\label{eq:estimator}
    \hat{\lambda} = \exp(1/(2a_2)).
\end{equation}
There are other fitting methods, e.g. least absolute residuals and bisquare, but we will focus on least squares fitting in this paper. From the higher order asymptotic relationship in (\ref{eq:higher_order}), an analogous optimisation problem can be formulated as
\begin{equation}\label{eq:quad_method_strict}
    \min_{a_1 \in \mathbb{R}, a_2<0}\left(\sum_{i=1}^{b_l}(a_2(l_i^2-2l_i\log(-l_i)) + a_1l_i- \log(h_i))^2\right),\; l_i = \log(x_i-\hat{x}_-)
\end{equation}
where again the estimation of $\lambda$ is obtained as in \eqref{eq:estimator}. We refer to (\ref{eq:quad_method}) as the \textit{leading order method} and (\ref{eq:quad_method_strict}) as the \textit{higher order method}.

We summarise the methodology presented above in the following algorithm.

\vspace{.1em}
\begin{alg}\label{alg:tailfit}
\text{ }
\begin{enumerate}
    \item Record $n$ iterations $(y_t)_{1 \leq t \leq n}$ of the random difference equation (\ref{eq:rand_diff}) with some chosen initial condition $y_0$ and construct the normalised histogram $\hat{\phi}$ \eqref{eq:approx_density}, with a chosen number of bins $b$ and boundary estimate $\hat{x}_{-}$.
    \item Choose $0<q<1$, and extract the information of the tail by obtaining $b_l$ as given in \eqref{eq:b_l}. 
    \item The closest values of $x_i$ to $\hat{x}_-$ typically take extremely small values $h_i$ and do not represent the tail distribution well. Hence, we only consider the histogram data where $h_j > h_{\text{min}}$ for a small threshold hyperparameter\footnote{In this paper, we always set $h_{\text{min}} = \max_{1 \leq i \leq b}(h_i)/100$.} $h_{\text{min}} > 0$.
    \item Solve the optimisation problem either (\ref{eq:quad_method}) or (\ref{eq:quad_method_strict}) with the information at the tail obtained in the previous step and obtain the estimation $\hat{\lambda}$ from (\ref{eq:estimator}).
\end{enumerate}
\end{alg}

Algorithm~\ref{alg:tailfit} is implemented in MATLAB, and our results can be reproduced using the source codes provided in \cite{Tey2024}. Figure~\ref{fig:log_hist}(a) illustrates the fitting of the histogram tail in the logarithm scale described by the leading-order method (\ref{eq:quad_method}), for the example where $f(x)=\lambda x$, on which we elaborate in the following.

\subsection*{Linear example}

Our first example consists of the linear map $f(x) = \lambda x$ with $\lambda \in (0,1)$. The support of the unique stationary distribution of the random system (\ref{eq:rand_diff}) is the interval $[-\varepsilon/(1-\lambda),\varepsilon/(1-\lambda)]$. For simplicity, we scale the maximum absolute value of $\xi_t$ to $(1-\lambda) \varepsilon$, ensuring that the interval $[-\varepsilon,\varepsilon]$ is the support for all $\lambda \in (0,1)$, see Figure~\ref{fig:log_hist}(b). The system (\ref{eq:rand_diff}) in this case reads as
\begin{equation}\label{eq:linear_noise}
    y_{t+1} = \lambda y_t + \xi_t,
\end{equation}

\begin{figure}[h]
    \centering
    \begin{overpic}[width=.49\textwidth]{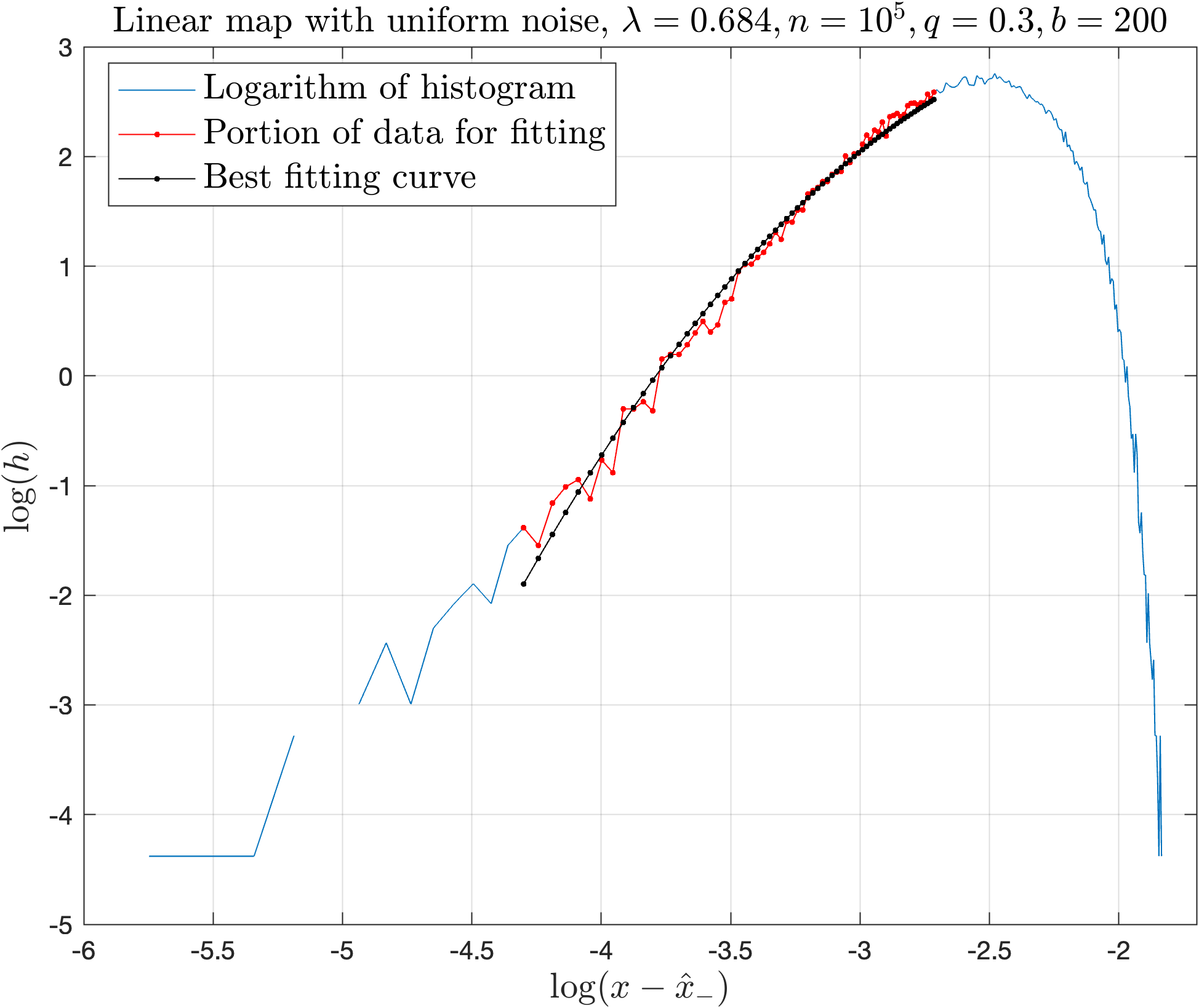}
    \put(0,0){
    (a)
    }
    \end{overpic}
    \begin{overpic}[width=.49\textwidth]{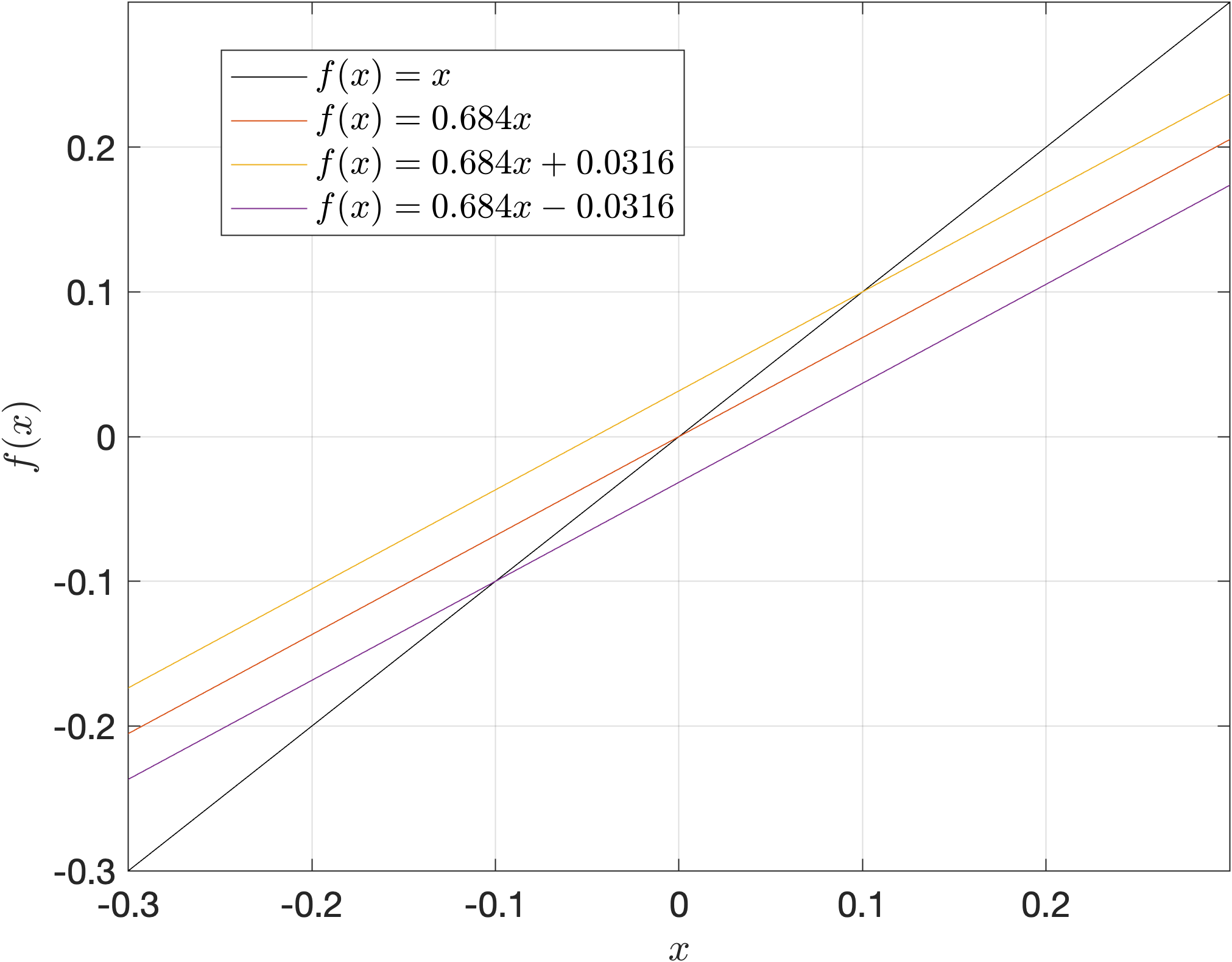}
    \put(0,0){
    (b)
    }
    \end{overpic}
    \captionsetup{width=\linewidth}\caption{Illustration of the optimisation scheme (\ref{eq:quad_method}) following Algorithm~\ref{alg:tailfit} in Figure (a), for the linear map $f(x) = \lambda x$ with uniform noise in (\ref{eq:linear_noise}), and the graphs of the extremal maps $f_{\pm}(x)=\lambda x \pm (1-\lambda)\varepsilon$ for $\varepsilon = 0.1$ is shown in Figure (b). Here we take $n = 10^5$ iterations with $\lambda = 0.684$ and construct the normalised histogram using $b = 200$ bins, shown in blue in Figure (a) on a logarithmic scale. The least squares quadratic curve for $q = 0.3$ quantile of the histogram data is shown in black according to (\ref{eq:quad_method}), giving an estimate $\hat{\lambda} = 0.419$, using boundary estimate $\hat{x}_- = x_1 - \Delta z$. From the figure, it becomes apparent the necessity of discarding the nearest data points to $\hat{x}_-$.}
    \label{fig:log_hist}
\end{figure}

\noindent with $\xi_t \sim U(-(1-\lambda)\varepsilon,(1-\lambda)\varepsilon)$ and we choose $\varepsilon = 0.1$. The noise $\xi_t$ has the probability density function
\begin{displaymath}
    p(z) = \begin{cases}
        \frac{1}{2(1-\lambda)\varepsilon}, \; \text{if }-(1-\lambda)\varepsilon\leq z\leq (1-\lambda)\varepsilon,\\
        0, \qquad \quad  \text{otherwise}.
    \end{cases}
\end{displaymath}

Next, we explore a specific nonlinear example which exhibits a topological bifurcation of a minimal invariant set as its parameter varies.

\subsection*{Nonlinear example}

Consider the random system (\ref{eq:rand_diff}) with invertible nonlinear map $f_a: \mathbb{R} \to \mathbb{R}$
\begin{equation}\label{eq:non-linear_eq}
\begin{split}
    y_{t+1}  = f_a(y_t) + \xi_t, \\
    f_a(y_t)  = 3\tanh{(y_t/2)} - a & \equiv \frac{3(1-\exp(-y_t))}{1+\exp(-y_t)} - a,
\end{split}
\end{equation}
\noindent for some parameter $a \in \mathbb{R}$. The map $f_a$ is plotted in Figure~\ref{fig:deter:tanh}(a) for various values of the parameter $a$. The fixed points $x^*$ of the deterministic map $f_a$ undergo fold bifurcations at two critical values $a_1^*<a_2^*$, forming an S-shaped curve in the bifurcation diagram, see the dotted graph in Figure \ref{fig:deter:tanh}(b).

\begin{figure}[!b]
    \begin{overpic}[width=.49\textwidth]{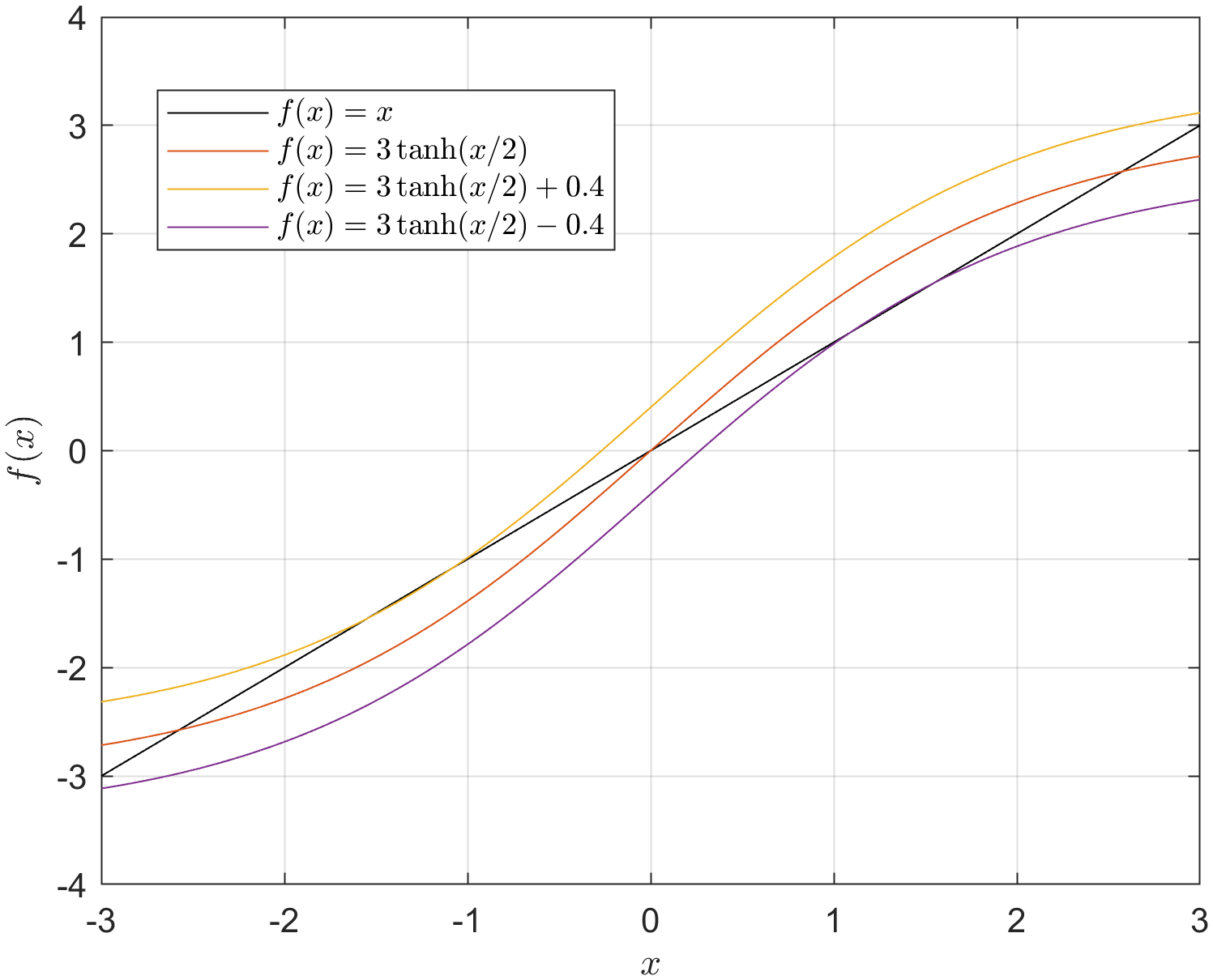}
    \put(0,0){
    (a)
    }
    \end{overpic}
    \begin{overpic}[width=.49\textwidth]{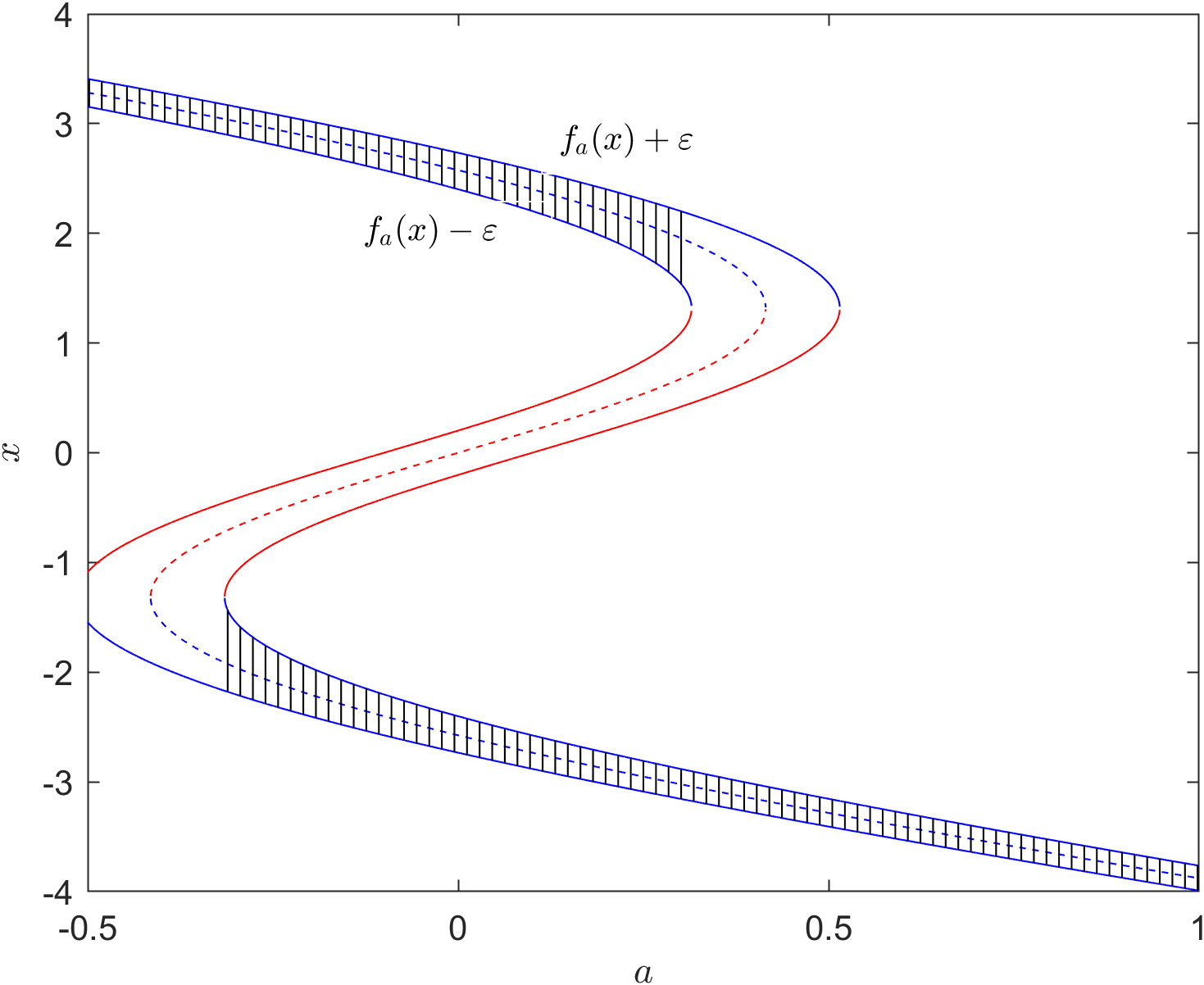}
    \put(0,0){
    (b)
    }
    \end{overpic}
\captionsetup{width=\linewidth}\caption{Illustration of the deterministic map $f_a(x) = 3 \tanh(x/2) - a$, the extremal maps $f_a(x) \pm \varepsilon$ for $\varepsilon = 0.1$ and their bifurcation diagrams. In (a), the graphs of the map $f$ for $a\in \{-0.4, 0, 0.4\}$ are plotted alongside the diagonal identity line. In (b), the bifurcation diagram for $f_a(x)$ across parameters $-0.5\leq a\leq 1$ is shown as the dotted line while those of the extremal maps are plotted as solid lines. The blue lines signify stable fixed points while red lines represent unstable fixed points. The black vertical lines are examples of minimal invariant intervals of (\ref{eq:non-linear_eq}) for different $a$ values.}
\label{fig:deter:tanh}
\end{figure}

The bifurcation points $(x^*,a^*)$ can be determined by solving the system of equations
\begin{equation}\label{eq:analytic_bif}
\begin{split}
    f_{a^*}(x^*)=x^*,\\
f_{a^*}'(x^*) = 1.
\end{split}
\end{equation}
In the deterministic setting, i.e. $\varepsilon=0$, this map mirrors a system with a critical transition from one stable state to another as the parameter of the system changes, akin to a fold catastrophe model~\cite{scheffer2009early}. This transition is evident in the bifurcation diagram where the stability of the upper fixed point weakens as the parameter $a$ increases and undergoes a fold bifurcation at a critical point, after which the orbits of the system converge to the fixed point on the lower branch. Following this transition, it becomes challenging for the system to return to the upper branch without significantly reducing parameter $a$.

The bifurcation diagrams for the extremal maps $f_a(x) + \varepsilon$ and $f_a(x) - \varepsilon$ are shifted right and left by $\varepsilon$ respectively compared to that one for the original map $f_a$, as plotted as solid lines in Figure~\ref{fig:deter:tanh}(b). Any minimal invariant interval $[x_-,x_+]$ of \eqref{eq:non-linear_eq} is such that $f(x_\pm)\pm \varepsilon=x_{\pm}$ \cite{olicon2021critical}, see Figure \ref{fig:deter:tanh}(b) where they are plotted as black lines.

The minimal invariant interval (attractor) in the positive region undergoes a topological bifurcation when $a$ increases near $a \approx 0.31$, where it explodes and disappears. Beyond this point, random trajectories are attracted to the other attractor in the negative region. This transition is locally non-invertible. Namely when $a$ is subsequently lowered, the negative attractor branch is followed until the attractor disappears and the trajectory jumps back to the erstwhile followed attractor branch. This behaviour is called \textit{hysteresis} and contrasts with the example considered in the introduction, where the attractor expands to a larger invariant interval and trajectories display flickering behaviour between two metastable states within it (see Figure~\ref{fig:modified_tanh}(a)).

In the numerical experiments, we consider uniformly distributed noise $\xi_t$ with probability density function
\begin{displaymath}
    p(z) = \begin{cases}
        \frac{1}{2\varepsilon}, \; \text{if }-\varepsilon\leq z\leq \varepsilon,\\
        0, \;\;\;  \text{otherwise}.
    \end{cases}
\end{displaymath}
We also consider noise with a truncated normal distribution, where the probability density function of the noise $\xi_t$ satisfies
\begin{equation}\label{eq:truncated_density}
    p(z;\mu,\sigma,-\varepsilon,\varepsilon) = \begin{cases}
        \frac{\psi(\frac{z-\mu}{\sigma})}{\sigma(\Psi(\frac{\varepsilon-\mu}{\sigma})-\Psi(\frac{-\varepsilon-\mu}{\sigma}))} \;\;, \text{if }-\varepsilon\leq z\leq \varepsilon,\\
        0 \qquad \qquad \qquad \quad \;\;\;, \text{otherwise},
    \end{cases}
\end{equation}

\noindent where we fix $\mu = 0$ and $\sigma = \varepsilon/2$. The functions $\psi(z) = e^{-z^2/2}/\sqrt{2\pi}$ and $\Psi(z) = (1+\erf(z/\sqrt{2}))/2$ are the probability density function and cumulative distribution function of the standard normal distribution respectively, with error function denoted by $\erf(z) = \frac{2}{\sqrt{\pi}}\int_0^z \exp(-t^2)dt$.

In the following, we investigate the efficacy of both the leading-order method (\ref{eq:quad_method}) and the higher-order method (\ref{eq:quad_method_strict}) as implemented through Algorithm~\ref{alg:tailfit}. We explore both cases where the deterministic map $f$ is either a linear map as in (\ref{eq:linear_noise}), or a nonlinear map as given in (\ref{eq:non-linear_eq}). In both cases, we test our method assuming that the noise is uniformly distributed. In addition, for the nonlinear example, we compare the results using a truncated normal distribution for the noise.

\section{Results}\label{sec:results}
Recall that the optimisation schemes in both the leading-order method (\ref{eq:quad_method}) and the higher-order method (\ref{eq:quad_method_strict}) depend on the boundary point $x_-$. 
When implementing Algorithm~\ref{alg:tailfit}, we consider two scenarios: i) the true boundary is assumed to be known, i.e. $\hat{x}_-=x_-$; ii) the boundary is approximated by $\hat{x}_- = x_1 - \Delta z$, cf. \eqref{eq:boundary_estimates}, which corresponds to the midpoint of an empty bin at the left end of the constructed histogram.

\subsection{Prior knowledge of the boundary point}\label{sec:known}

In this subsection, we assume the true boundary is known, i.e. $\hat{x}_- =x_-$. Numerically, the boundary point $x_-$ can be obtained by solving the fixed point equation $f_-(y)=y$,  using Newton's method, for instance. For a broad range of system parameters, we implement Algorithm~\ref{alg:tailfit} by taking $n=10^5$ iterations, and fixing the hyperparameters $b=200$ and $q=0.3$. We refer the reader to Appendix~\ref{sec:optimum} for a discussion of the choice of $b$ and $q$, and Appendix~\ref{sec:low_number} for results in smaller sample sizes $n$, showing that satisfactory early warning signals can already be obtained for $n=10^3$. The initial condition $y_0$ is chosen inside the support of the stationary distribution. This procedure is repeated 100 times, with different noise realisations. Let $\hat{\lambda}_{x_-}$ denote the estimation of $\lambda$ when the true boundary $x_-$ is used.

\begin{figure}[!b]
\begin{overpic}[width=.49\textwidth]{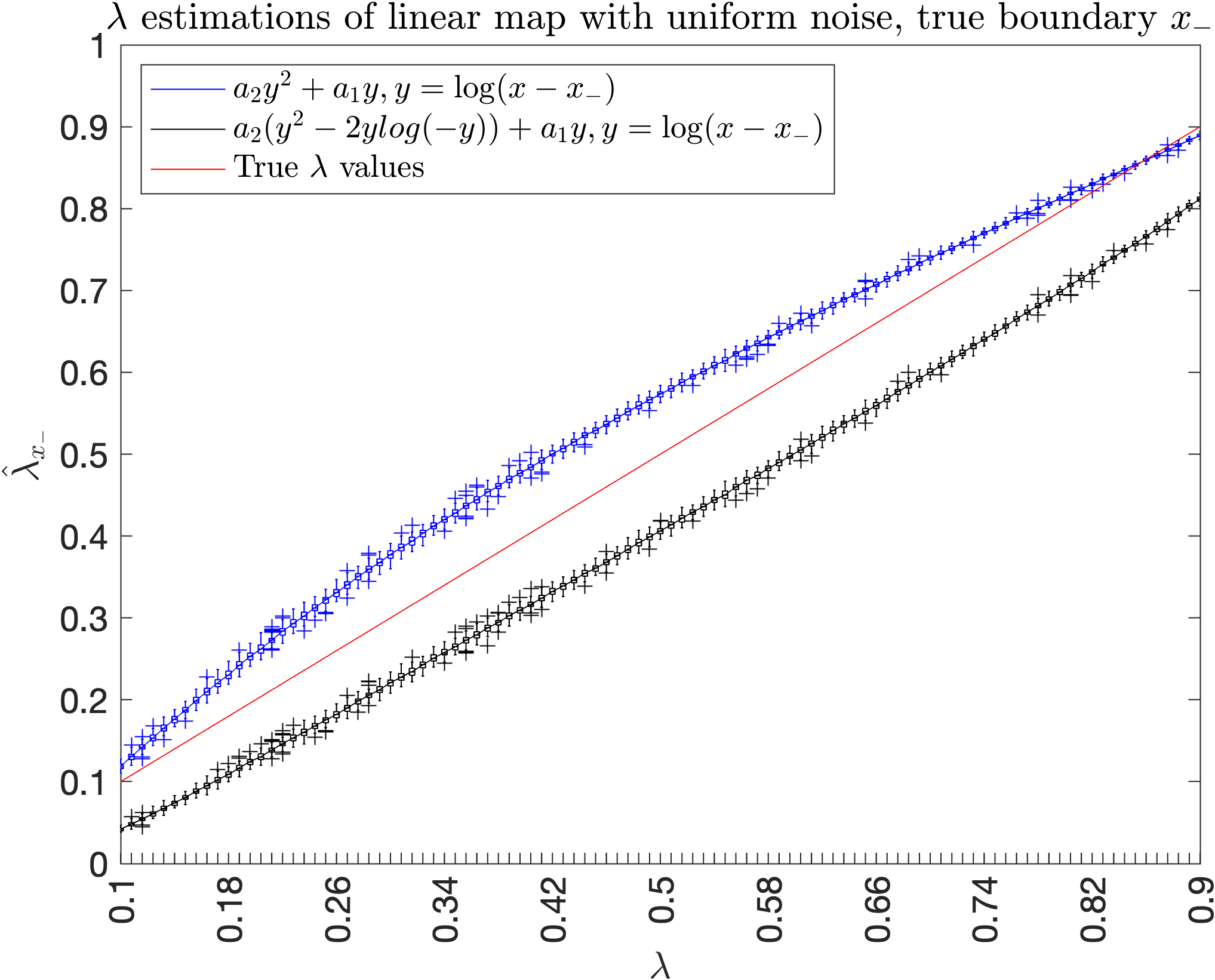}
    \put(0,0){
    (a)
    }
    \end{overpic}
    \begin{overpic}[width=.49\textwidth]{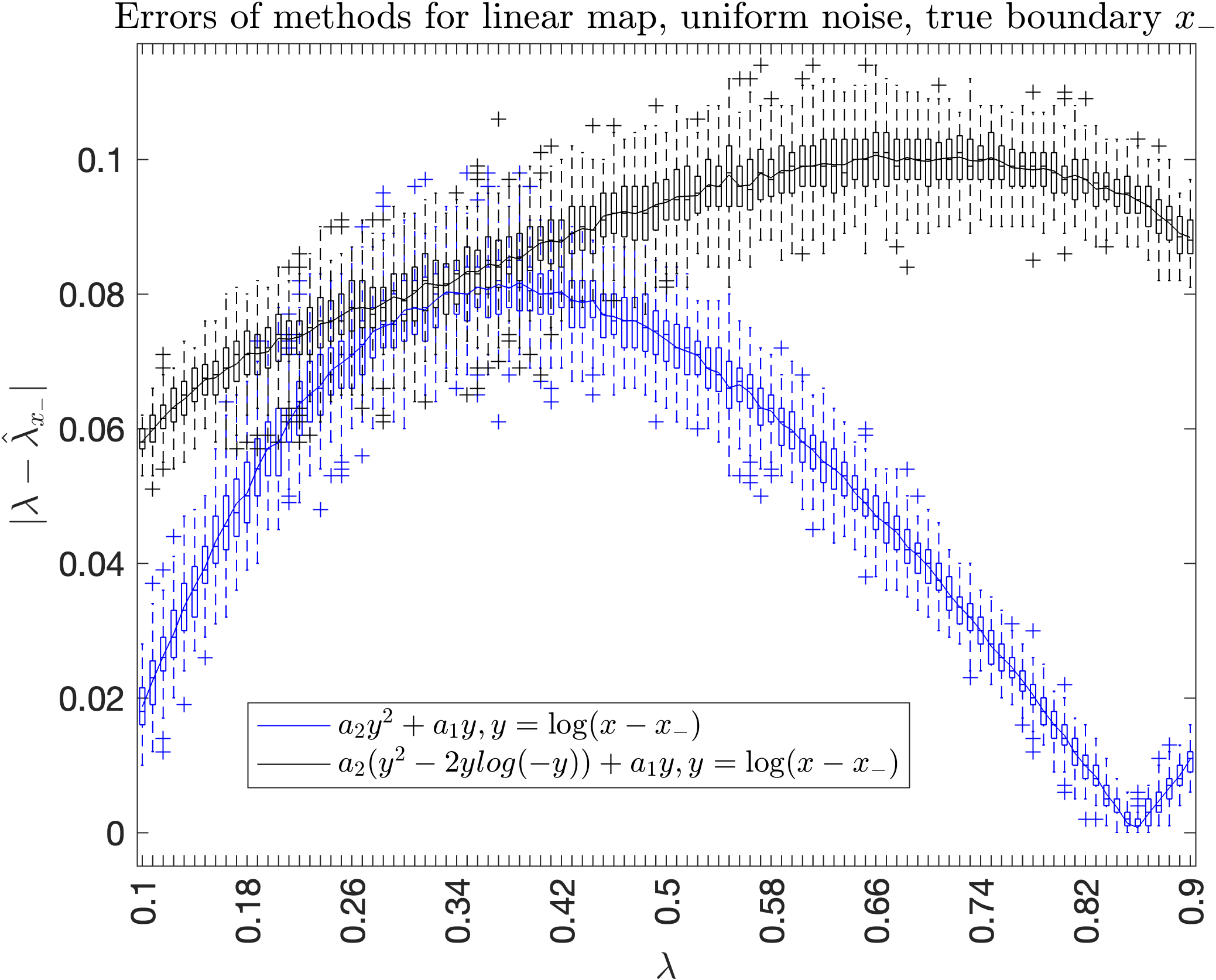}
    \put(0,0){
    (b)
    }
    \end{overpic}
    \captionsetup{width=\linewidth}\caption{Numerical approximations of $\lambda$ from the time series of the linear maps with uniform noise (\ref{eq:linear_noise}) for $0.1 \leq \lambda \leq 0.9$, using methods (\ref{eq:quad_method}) in blue and (\ref{eq:quad_method_strict}) in black, respectively, assuming the left boundary $x_-$ is known. The approximations are obtained following Algorithm~\ref{alg:tailfit} with $n= 10^5, b=200$, and $q = 0.3$. For each parameter value $\lambda$, the estimator $\hat{\lambda}_{x_-}$ (cf.~\eqref{eq:estimator}) is computed $100$ times, using independent time series generated with different noise realisations. Figure~(a) shows the box plots of the estimators $\hat{\lambda}_{x_-}$, while the corresponding errors are plotted in (b). The boxes represent the interquartile range (25$^{\text{th}}$ to 75$^{\text{th}}$ percentile), while the whiskers extend to all non-outlier data points, and the crosses mark the outliers. Sample means are indicated by solid lines.}
    \label{fig:lambda_actual_linear}
\end{figure}

\subsubsection*{Linear map with uniform noise}\label{sec:linear_actual}

We consider the linear map with uniform noise (\ref{eq:linear_noise}).
The resulting estimates $\hat{\lambda}_{x_-}$ are shown as box plots in Figure~\ref{fig:lambda_actual_linear}(a), while the corresponding errors $|\lambda - \hat{\lambda}_{x_-}|$ are shown in Figure~\ref{fig:lambda_actual_linear}(b). All errors remain small, in the order of $\mathcal{O}(10^{-1})$, for all $101$ parameter values $a$ uniformly sampled from the interval $[0.1,0.9]$. Overall, the leading-order method (\ref{eq:quad_method}) outperforms the higher-order method (\ref{eq:quad_method_strict}).

\begin{figure}[!b]
\begin{overpic}[width=.49\textwidth]{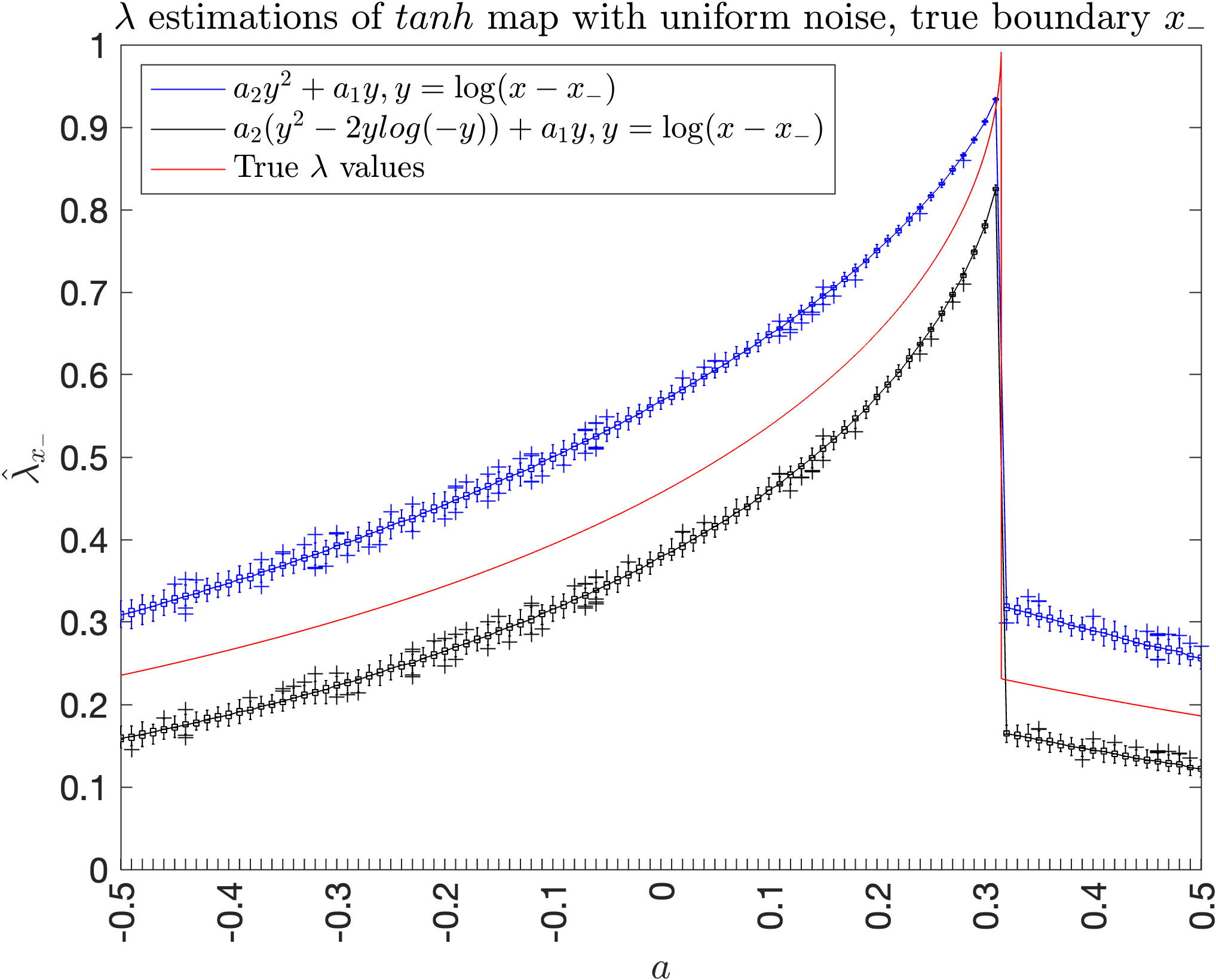}
    \put(0,0){
    (a)
    }
    \end{overpic}
    \begin{overpic}[width=.49\textwidth]{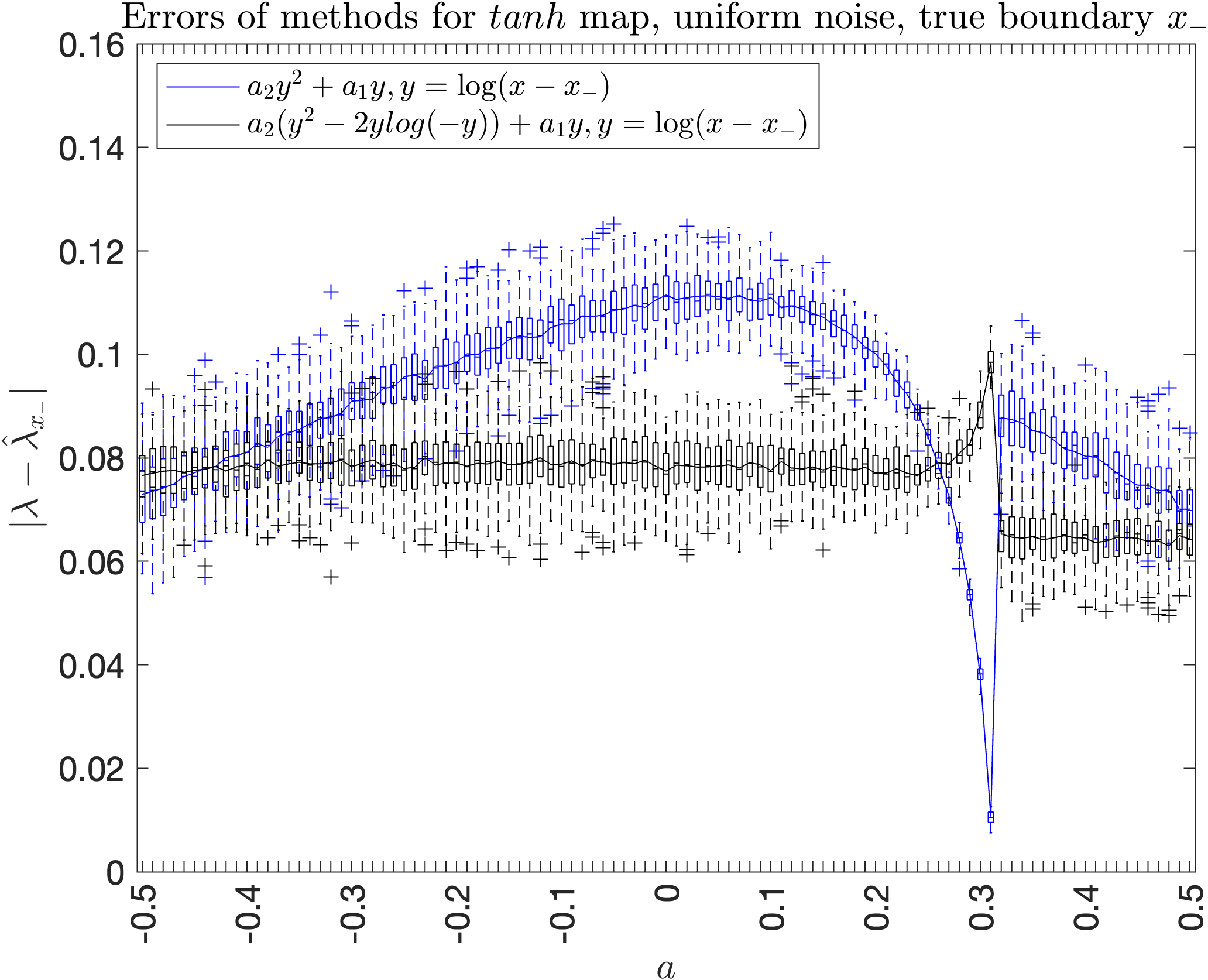}
    \put(0,0){
    (b)
    }
    \end{overpic}
    
    \captionsetup{width=\linewidth}\caption{Numerical approximation of $\lambda$ for the nonlinear map with uniform noise (\ref{eq:non-linear_eq}), with $-0.5 \leq a \leq 0.5$. We use both methods (\ref{eq:quad_method}) and (\ref{eq:quad_method_strict}), shown in blue and black, respectively, assuming the boundary $x_-$ is known. In (a), the estimator $\hat{\lambda}_{x_-}$ is plotted against the true value $\lambda$, while (b) shows the corresponding errors. The results are obtained using Algorithm~\ref{alg:tailfit} with $n= 10^5, b=200, q = 0.3$ and the initial value $y_0$ is chosen from the support of the stationary distribution.}
    \label{fig:lambda_actual_non_linear}
\end{figure}

\subsubsection*{Nonlinear map with bounded noise}

Similarly, we generate trajectories of (\ref{eq:non-linear_eq}) with uniform noise for $101$ parameter values $a$ uniformly sampled from $[-0.5,0.5]$,
and approximate $\lambda$ using both methods (\ref{eq:quad_method}) and (\ref{eq:quad_method_strict}). The numerical results
are shown in Figure~\ref{fig:lambda_actual_non_linear}. All errors remain small (less than $0.13$). We observe that the approximations using the higher-order asymptotics (\ref{eq:quad_method_strict}) generally outperform those by the leading-order method (\ref{eq:quad_method}), except for cases in which $\lambda$ is close to 1 or 0, that is $-0.5\lesssim a \lesssim -0.43$ and $0.25 \lesssim a \lesssim 0.3$ ($0.23 \lesssim \lambda(a)\lesssim 0.25$ and $0.83\lesssim \lambda(a)\lesssim 1$).

Additionally, we test the same methodology by replacing the uniform noise with a truncated normal distribution (\ref{eq:truncated_density}). The results for this experiment are shown in Figure~\ref{fig:quad_actual_non_linear_truncated}. As in the previous case, the approximations using the higher-order method (\ref{eq:quad_method_strict}) outperform the method in (\ref{eq:quad_method}), but in this case for all values of $-0.5\lesssim a \lesssim 0.5$ (meaning $0.19\lesssim \lambda(a) \lesssim 1$)
and the differences in approximation errors between the methods are more significant.

\begin{figure}[!t]
    \begin{overpic}[width=.49\textwidth]{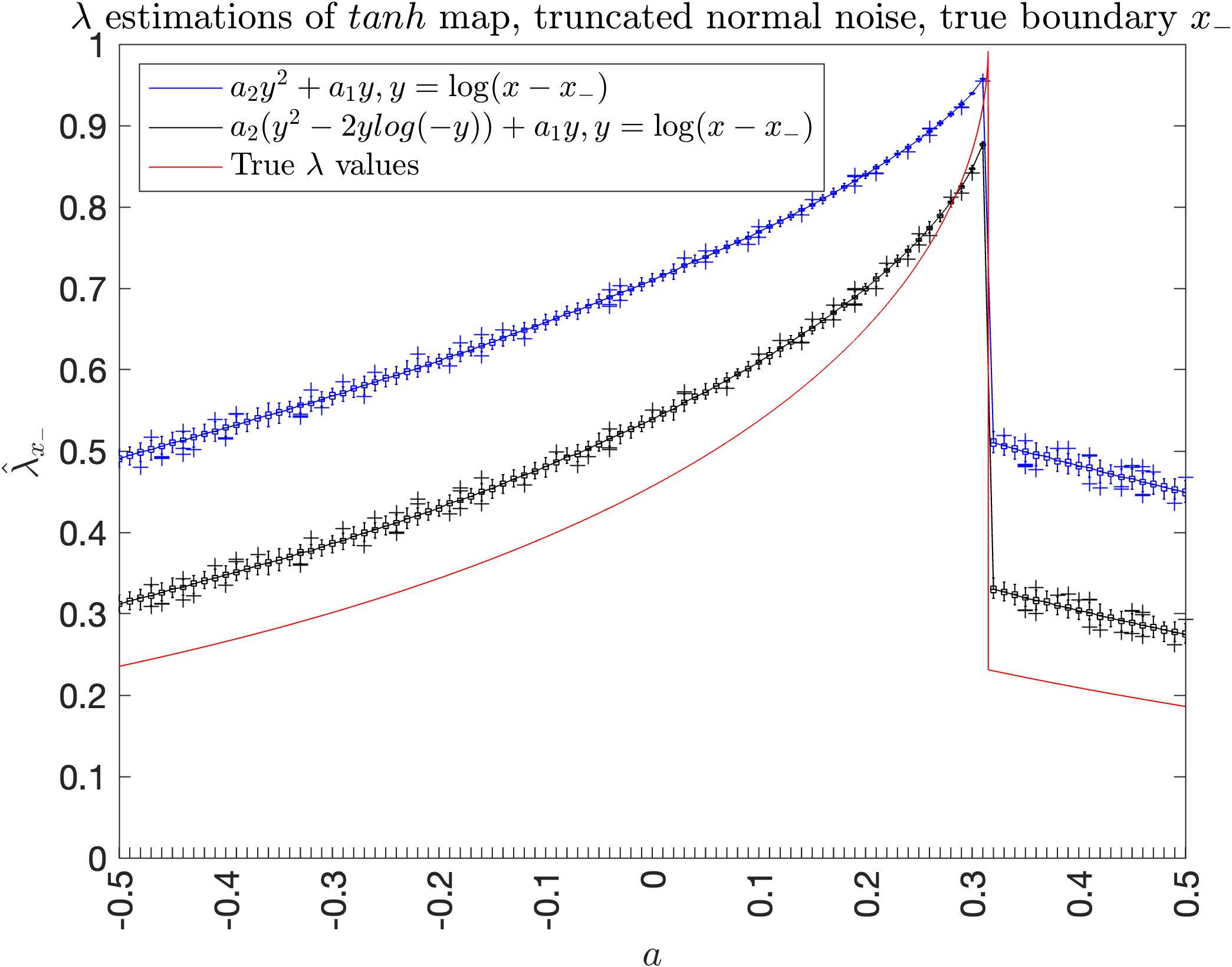}
    \put(0,0){
    (a)
    }
    \end{overpic}
    \begin{overpic}[width=.49\textwidth]{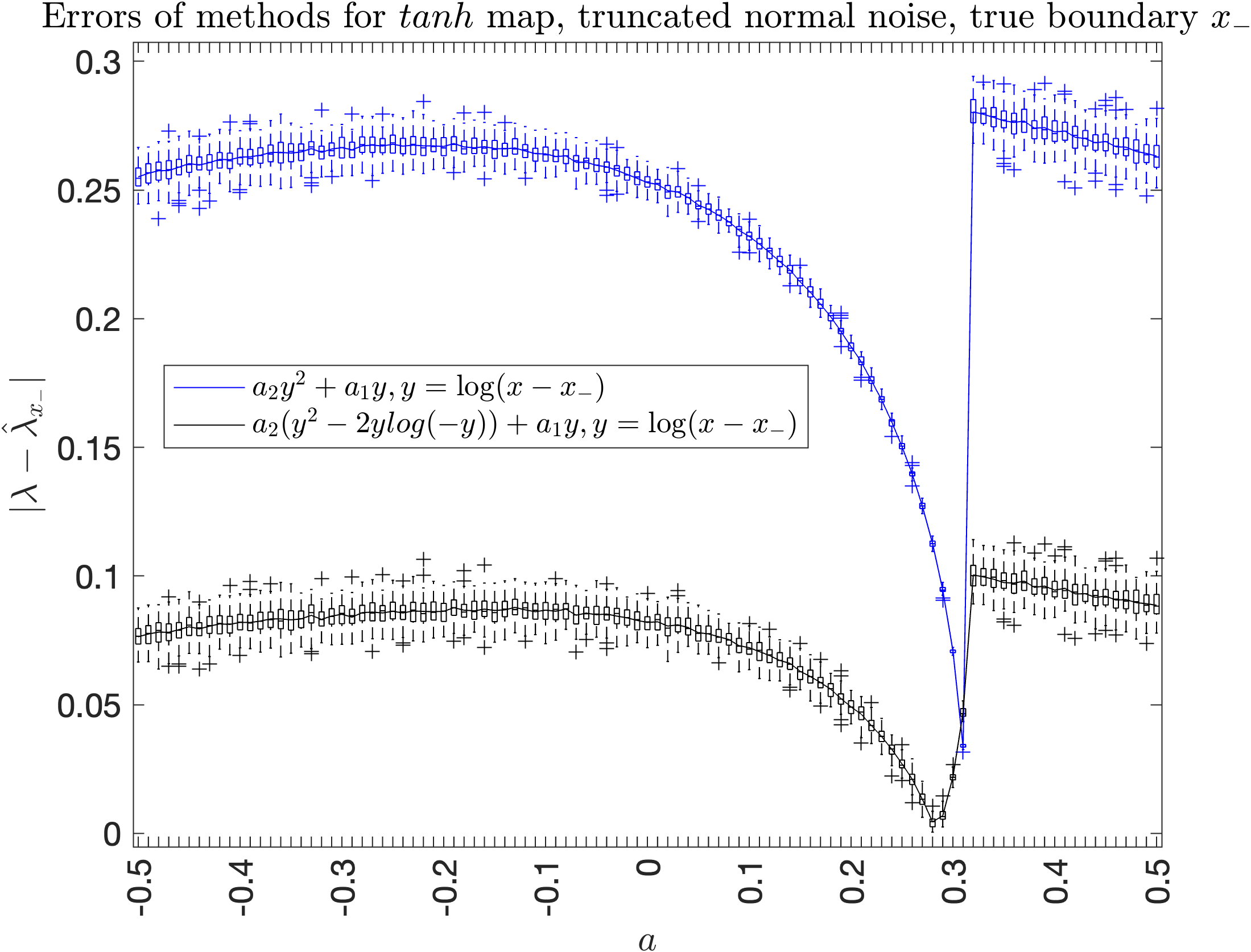}
    \put(0,0){
    (b)
    }
    \end{overpic}
    \captionsetup{width=\linewidth}\caption{Numerical approximation of $\lambda$ from the nonlinear map  (\ref{eq:non-linear_eq}) where the distribution of the noise $\xi_t$ is truncated normal (\ref{eq:truncated_density}) with $n= 10^5, b=200,$ and $q = 0.3$ using both (\ref{eq:quad_method}) and (\ref{eq:quad_method_strict}), where the boundary $x_-$ is the true boundary point, plotted in (a) while the errors in (b).}
    \label{fig:quad_actual_non_linear_truncated}
\end{figure}

\subsection{Estimated boundary from time series}\label{sec:unknown}

We now consider the case where the left boundary point $x_-$ is unknown and must be approximated from the time series. To this end, we incorporate an empty bin of zero height in the stationary density approximation $\hat{\phi}$, cf.~(\ref{eq:approx_density}). More precisely, we estimate the boundary as $\hat{x}_- = x_1 - \Delta z$ (see (\ref{eq:boundary_estimates})) where we recall that $\Delta z$ denotes the width of the histogram bins and $x_1$ is the midpoint of the left most bin. The corresponding estimator of $\lambda$ is denoted by $\hat{\lambda}_{\hat{x}_-}$. As in the previous subsection, we fix the hyperparameters to be $n=10^5$, $b=200$, and $q=0.3$. 

For comparison, we also include the \textit{interval method} proposed in \cite[Algorithm~11]{kuehn2018early}. In this approach, the derivative $\lambda = f'(x_-)$ is approximated by tracking one iteration of points $(y_{\gamma_i})_{1\leq i\leq k_1} \subset I_1$ and $(y_{\eta_i})_{1\leq i\leq k_1} \subset I_2$ in two disjoint intervals $I_1 = [a_1,b_1]$ and $I_2 = [a_2,b_2]$ near the boundary, with $b_1<a_2$. The approximation is then given by
\begin{equation}\label{eq:interval_method}
    \hat{\lambda} = \frac{\frac{1}{k_1}\sum_{i=1}^{k_1}y_{(\gamma_i+1)}- \frac{1}{k_2}\sum_{i=1}^{k_2}y_{(\eta_i+1)}}{b_2-a_1}.
\end{equation}

\subsubsection*{Linear map with uniform noise}

For the linear map with uniform noise, the numerical results for the approximation of $\lambda$ using the leading-order method (\ref{eq:quad_method}), the higher-order method (\ref{eq:quad_method_strict}) and the interval method are shown in Figure~\ref{fig:lambda_estimate_linear},  
It is evident that the higher-order asymptotics (\ref{eq:quad_method_strict}) perform significantly worse in this case. In contrast, the leading-order method (\ref{eq:quad_method}) generally outperforms the higher-order method (\ref{eq:quad_method_strict}) for $0.1\lesssim \lambda \lesssim 0.9$, and the interval method for $\lambda\lesssim 0.73$.

Comparing the estimations $\hat{\lambda}$ obtained with the true boundary $x_-$ in Figure~\ref{fig:lambda_actual_linear} to those with the estimated boundary in Figure~\ref{fig:lambda_estimate_linear}, we find that the latter is generally smaller, i.e. $\hat{\lambda}_{\hat{x}_-} < \hat{\lambda}_{x_-}$. This downward shift of the $\lambda$ estimates using the leading-order method (\ref{eq:first_order}), irrespective of the true value of $\lambda$, appears to result from the quadratic fitting procedure, see Appendix~\ref{APPENDIX:error_discussion} for further discussion.

\begin{figure}[!t]
    \centering
    \begin{overpic}[width=.49\textwidth]{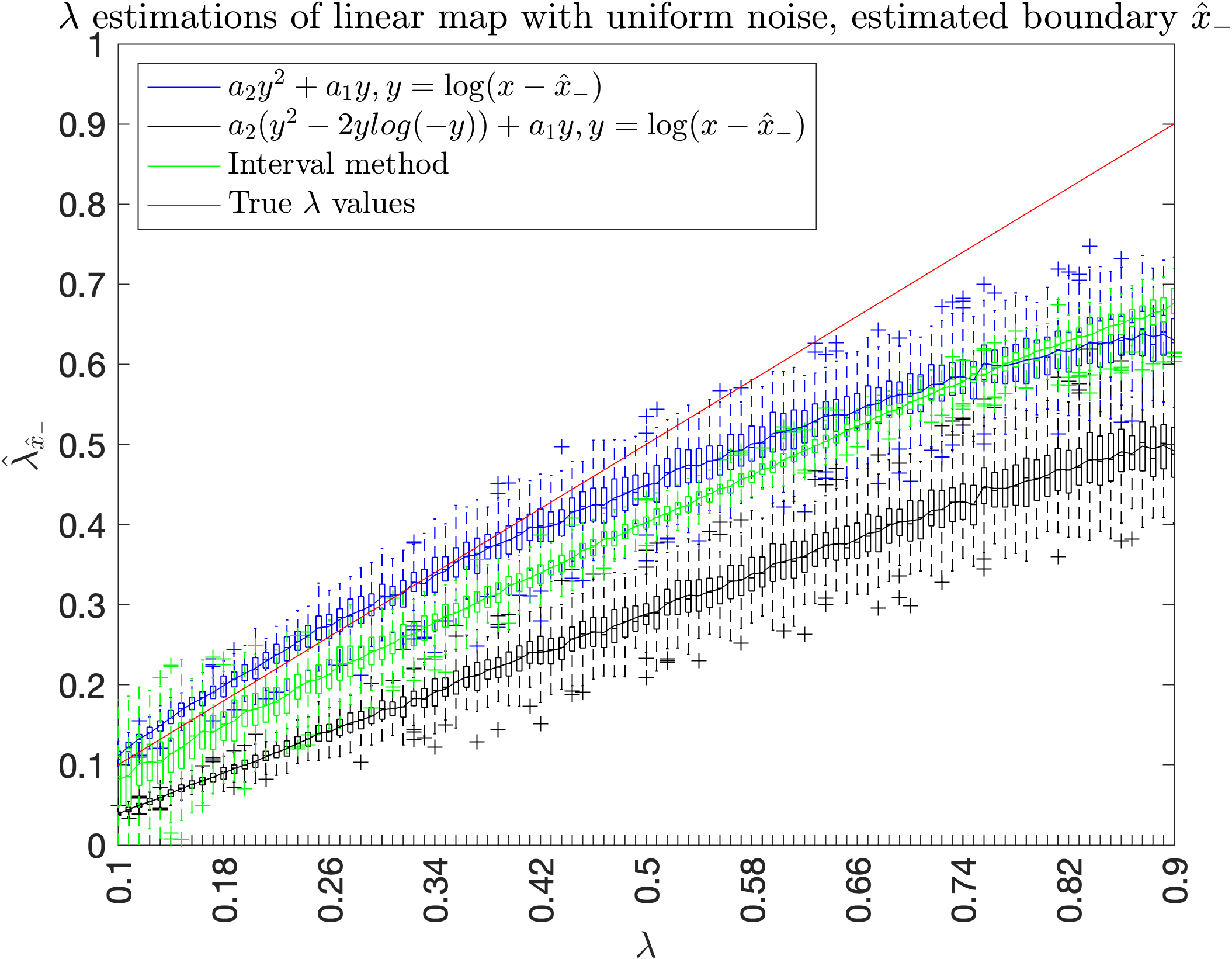}
    \put(0,0){
    (a)
    }
    \end{overpic}
    \begin{overpic}[width=.49\textwidth]{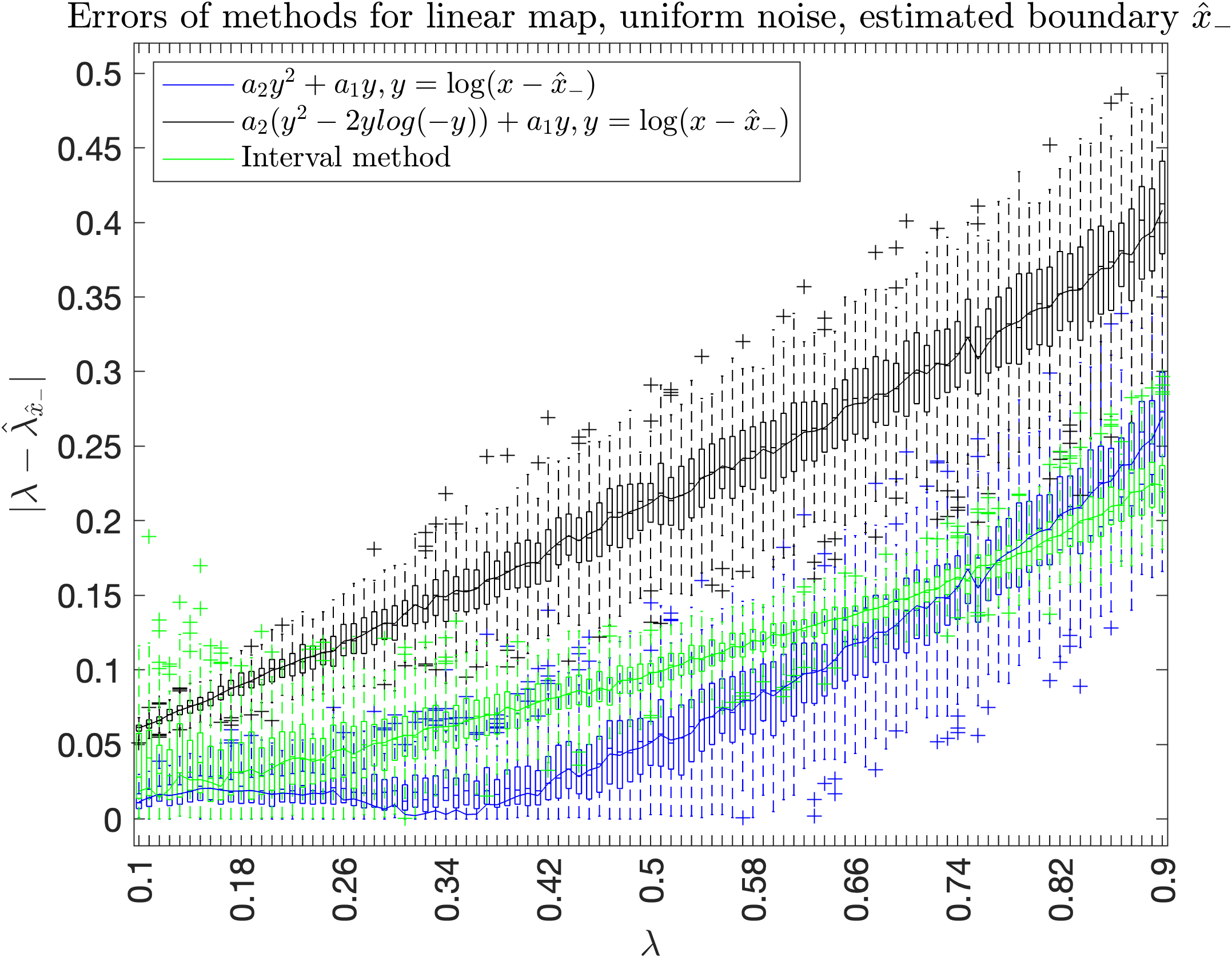}
    \put(0,0){
    (b)
    }
    \end{overpic}
    \captionsetup{width=\linewidth}\caption{Numerical approximation of $\lambda$ from the linear map with uniform noise (\ref{eq:linear_noise}) for $0.1 \leq \lambda \leq 0.9$, using (\ref{eq:quad_method}) depicted in blue and (\ref{eq:quad_method_strict}) in black, following Algorithm~\ref{alg:tailfit} with $n= 10^5, b=200,$ and $q = 0.3$. The boundary $x_-$ is estimated by adding an empty bin to the left of the histogram, as indicated in (\ref{eq:boundary_estimates}). The approximation using the interval method in \cite{kuehn2018early} is also plotted for comparison as green plot in (a), and the corresponding errors in (b). The interquartile ranges of the estimations are represented by the boxes with whiskers extending to all estimates, with outliers marked with crosses.}
    \label{fig:lambda_estimate_linear}
\end{figure}

\subsubsection*{Nonlinear map with bounded noise}\label{sec:tanh_estimated}
Similarly to the linear case, we present numerical results for the estimation of $\lambda$ over a range of parameter values $a$. Starting with $a=-0.5$, we compute the first 100 iterations of (\ref{eq:rand_diff}), with an initial condition $y_0 = 3$, and use the 100$^{\text{th}}$ iterate as the initial condition for Algorithm~\ref{alg:tailfit}. For each subsequent value of $a$ in $[-0.5,0.5]$ ($101$ uniformly spaced samples), 
the final iterate $y_{n}$ from the previous run is taken as the new initial condition, and Algorithm~\ref{alg:tailfit} is repeated.

We compare Algorithm~\ref{alg:tailfit} with the interval method from \cite{kuehn2018early} in Figure~\ref{fig:lambda_estimate_non_linear}. In Figure~\ref{fig:lambda_estimate_non_linear}(a), the topological bifurcation becomes evident at $a^* = \log(2-\sqrt{3}) + \sqrt{3} \approx 0.3151$ obtained from solving (\ref{eq:analytic_bif}). 
At this critical value, the left boundary point $x_-$ vanishes in a fold bifurcation of $f_-$, leaving the extremal map with only a single fixed point for $a>a^*$. Nevertheless, in our simulations with $n=10^5$ iterations, the transition to other regions of the state space is observed in the time series only after $a\approx 0.4$. This 
delay reflects the \textit{quasi-stationary} behaviour of bifurcating minimal invariant intervals near the bifurcation threshold. We refer the reader to \cite{olicon2021critical} for a theoretical approach to quasi-stationary distributions on bifurcating minimal invariant sets.

\begin{figure}[!t]
    \begin{overpic}[width=.49\textwidth]{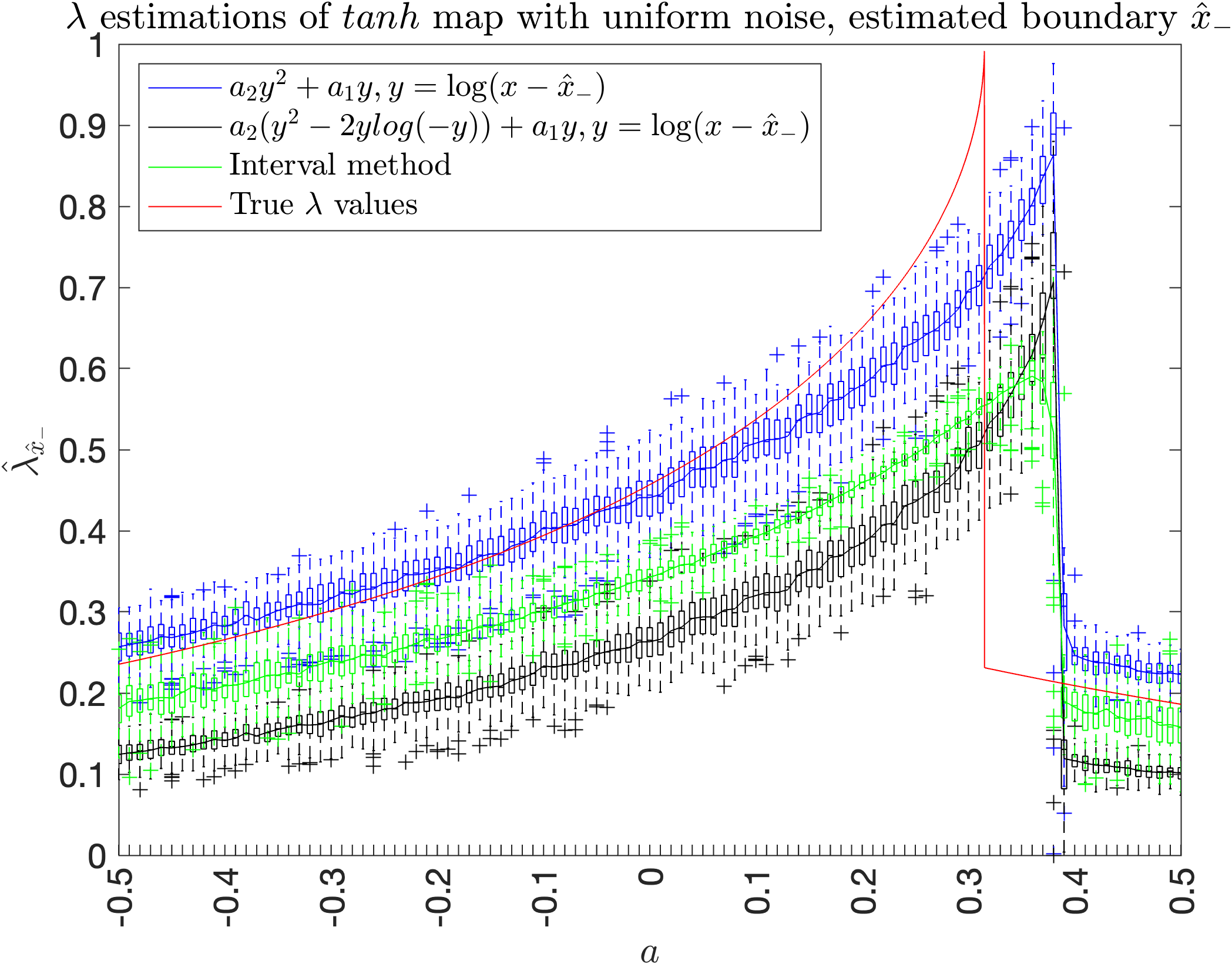}
    \put(0,0){
    (a)
    }
    \end{overpic}
    \begin{overpic}[width=.49\textwidth]{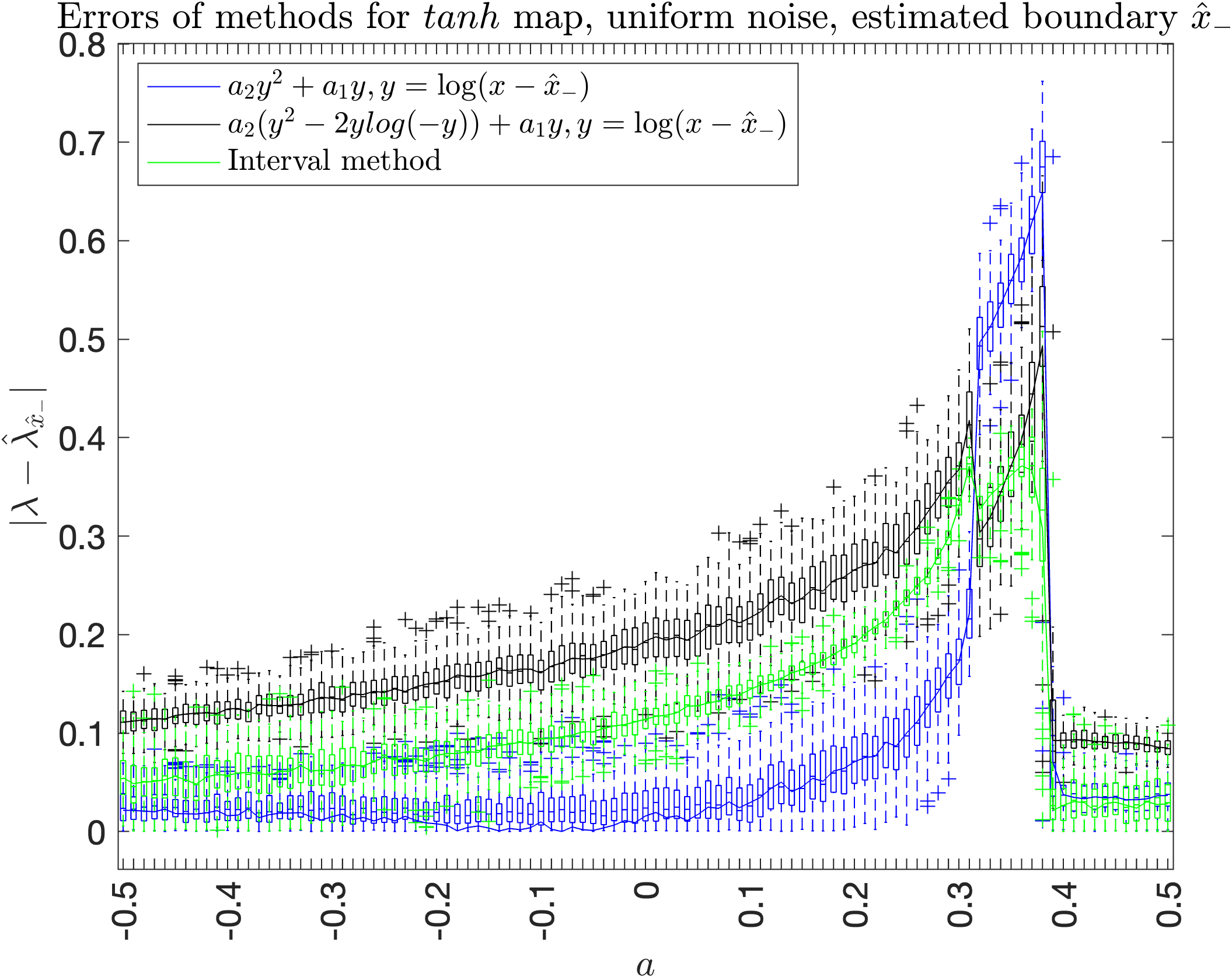}
    \put(0,0){
    (b)
    }
    \end{overpic}
    \captionsetup{width=\linewidth}\caption{Numerical approximation of $\lambda$ from the nonlinear map with uniform noise (\ref{eq:non-linear_eq}) for $-0.5 \leq a \leq 0.5$, using (\ref{eq:quad_method}) and (\ref{eq:quad_method_strict}) following Algorithm~\ref{alg:tailfit} with $n= 10^5, b=200,$ and $q = 0.3$. As parameter $a$ is increased, the initial value $y_0$ is the final iteration from the previous parameter. Together with the results using the interval method from \cite{kuehn2018early}, the approximations are plotted in (a) and their errors in (b).}
    \label{fig:lambda_estimate_non_linear}
\end{figure}

Similar to the linear case with uniform noise, the approximations obtained from the leading-order method (\ref{eq:quad_method}) generally outperform both the higher-order method (\ref{eq:quad_method_strict}) and the interval method\footnote{The interval method briefly outperforms the proposed method just after the topological bifurcation since the dynamics remain trapped in the quasi-stationary distribution.}. Errors in estimating the boundary point effectively shorten the tail of the stationary density, which in turn lowers the estimator $\hat{\lambda}$ (see Appendix~\ref{APPENDIX:error_discussion} for further discussion). Interestingly, this systematic underestimations align favourably with the leading-order method (\ref{eq:quad_method}) for small values of $\lambda$, as the method tends to overestimate $\lambda$ when the true boundary $x_-$ is known, as evidenced in Figure~\ref{fig:lambda_actual_non_linear}(a).

\begin{figure}[!t]
    \begin{overpic}[width=.49\textwidth]{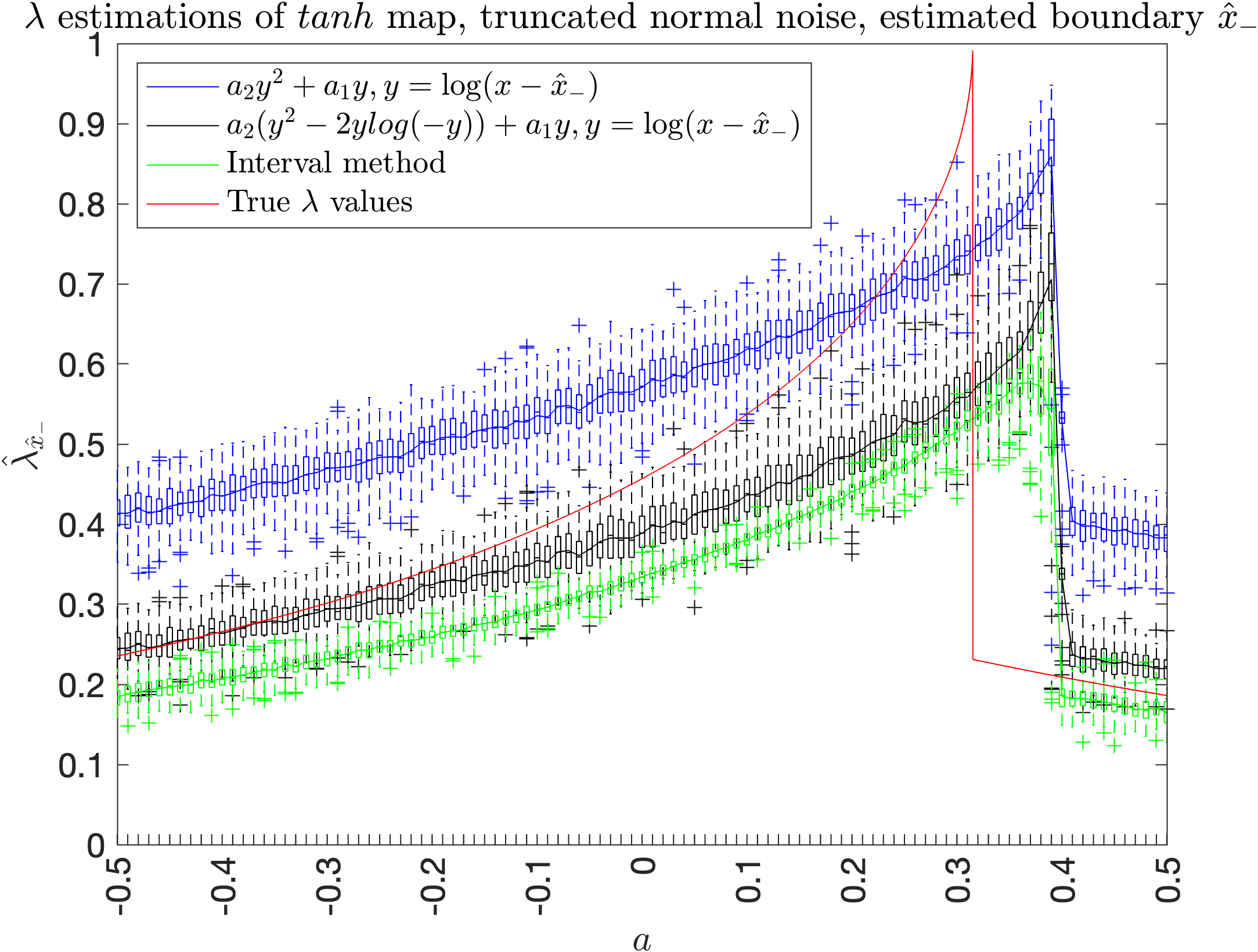}
    \put(0,0){
    (a)
    }
    \end{overpic}
    \begin{overpic}[width=.49\textwidth]{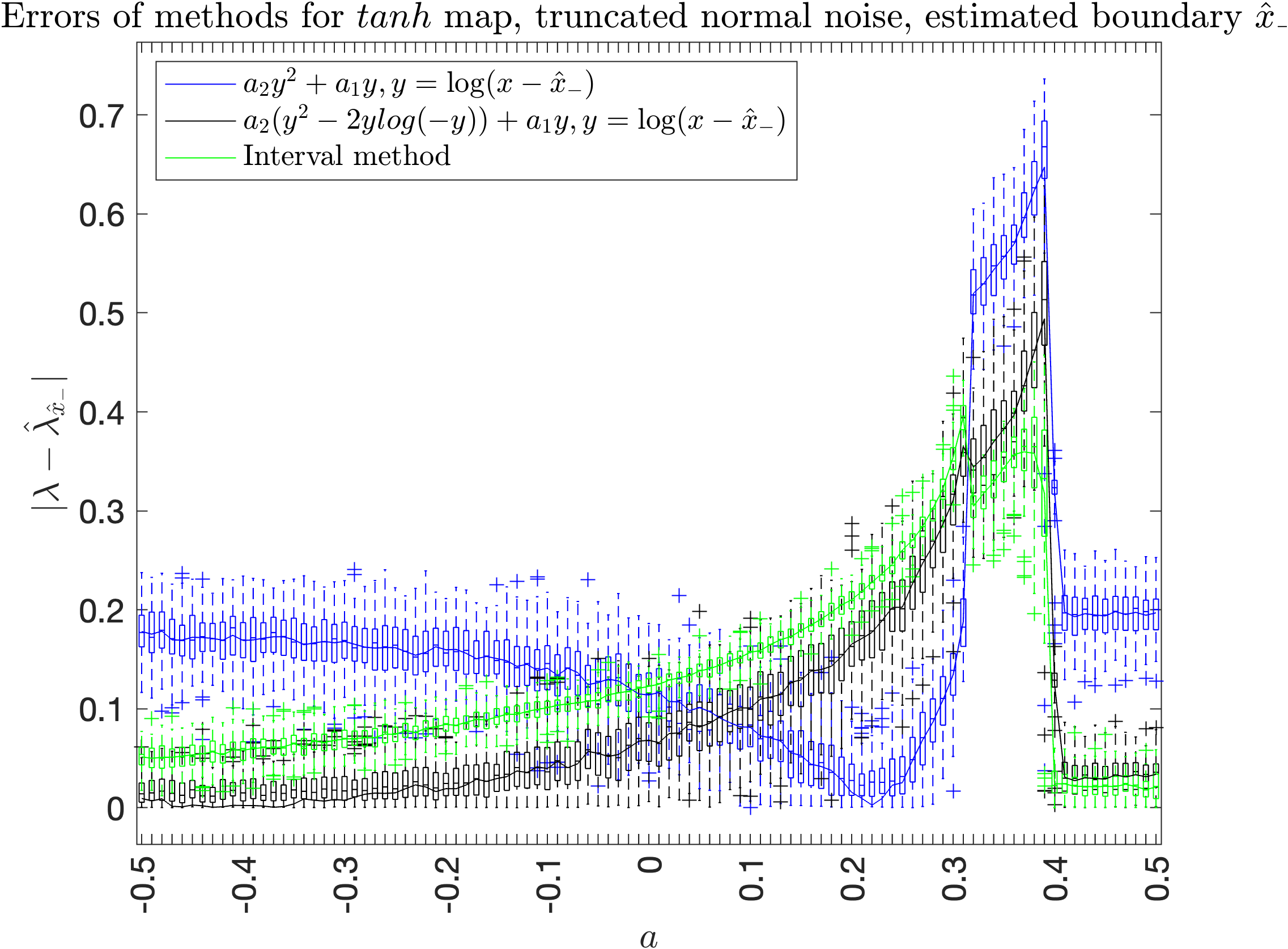}
    \put(0,0){
    (b)
    }
    \end{overpic}
    \captionsetup{width=\linewidth}\caption{Numerical approximation of $\lambda$ from the nonlinear map  (\ref{eq:non-linear_eq}) where the distribution of noise $\xi_t$ is truncated normal, using similar procedures to Figure~\ref{fig:lambda_estimate_non_linear} .} 
    \label{fig:quad_estimate_non_linear_truncated}

\end{figure}

In the case where the noise is sampled from a truncated normal distribution (\ref{eq:truncated_density}), the results of both methods (\ref{eq:quad_method}) and (\ref{eq:quad_method_strict}) are shown in Figure~\ref{fig:quad_estimate_non_linear_truncated}, together with those obtained from the interval method. In contrast to the uniform noise case, the higher-order method (\ref{eq:quad_method_strict}) outperforms both the leading-order method (\ref{eq:quad_method}) and the interval method for smaller values of $\lambda$, while the leading-order method performs best for $\lambda\approx 1$. 

As in the linear case, the estimator using the estimated boundary, $\hat{\lambda}_{x_-}$, is consistently less than the estimator using the true boundary, $\hat{\lambda}_{\hat{x}_-}$; that is $\hat{\lambda}_{\hat{x}_-} < \hat{\lambda}_{x_-}$ (cf. Figure~\ref{fig:lambda_actual_non_linear} and \ref{fig:quad_actual_non_linear_truncated} compared with Figure~\ref{fig:lambda_estimate_non_linear} and \ref{fig:quad_estimate_non_linear_truncated}). We refer the reader to Appendix~\ref{APPENDIX:error_discussion} for further discussion of how boundary estimation affects the approximation of $\lambda$.

For the truncated noise case, 
the leading-order method (\ref{eq:quad_method}) with the true boundary $x_-$ tends to substantially overestimate $\lambda$, particularly for small values of $\lambda$, as shown in Figure~\ref{fig:quad_actual_non_linear_truncated}. Consequently, even when the boundary is estimated from the histogram data, the corresponding estimator $\hat{\lambda}_{\hat{x}_-}$ still strongly overestimates $\lambda$ in this regime.
In contrast, for the higher-order method (\ref{eq:quad_method_strict}), the reduction in $\lambda$ estimates using estimated boundary $\hat{x}_-$ instead of $x_-$ actually improves accuracy, bringing the estimates closer to the true values of $\lambda$.

Overall, we conclude that when the boundary of the support of the stationary distribution is unknown, the leading-order method (\ref{eq:quad_method}) performs better for uniform noise for any $\lambda \in (0,1)$, while the higher-order method (\ref{eq:quad_method_strict}) generally performs better for truncated normal noise, provided that the approximation $\hat{\phi}$ in (\ref{eq:approx_density}) approximates well the stationary density rather than the quasi-stationary density shortly after the topological bifurcation.
In practical scenarios where only the time series is available, without prior knowledge of the underlying dynamical system or the distribution of the noise, both methods can be used to estimate $\lambda$, thereby providing an early warning signal to an onset of a bifurcation of the stochastic process.

\section{Conclusion}
In this paper, we introduced two numerical methods (\ref{eq:quad_method}) and (\ref{eq:quad_method_strict}), designed to estimate $\hat{\lambda}$ for the derivatives of extremal maps $\lambda = f_-'(x_-)$ or $ f_+'(x_+)$ at the boundary of the support  $[x_-,x_+]$ of a stationary distribution for random difference equations with bounded noise, as described in (\ref{eq:rand_diff}). These methods are based on asymptotic expansions of the stationary density tail, namely (\ref{eq:first_order}) (leading-order) and in (\ref{eq:higher_order}) (higher-order), derived from \cite{olicon24tail} and Theorem~\ref{THM:main}, respectively. Since $\lambda=1$ at a topological bifurcation of the support of the stationary density, the proximity of $\hat{\lambda}$ to $1$ provides an early warning signal of such a bifurcation. 

To validate these methods, we applied them to both linear (\ref{eq:linear_noise}) and nonlinear (\ref{eq:non-linear_eq}) maps with additive uniform noise, and additionally to the nonlinear map with truncated normal noise. 
When the boundary $x_-$ is known, both methods give good approximations of $\lambda$, with maximum errors $|\lambda-\hat{\lambda}| < 0.13$ in the uniform noise cases. For the nonlinear map with truncated normal noise, however, the higher-order method significantly outperforms the leading-order method. Further investigation is required to understand the role of the noise distribution in these approximations, which is left for future work. In particular, one should expect different results when the noise density $p:[-\varepsilon,\varepsilon]\rightarrow \mathbb{R}_+$ has a zero of order $r$ at $\pm\varepsilon$, since in this case, the asymptotic relation \eqref{eq:first_order} must be corrected by a factor depending on $r$, as shown in \cite{olicon24tail}.

In practical settings where both the boundary and the underlying system are unknown, the leading-order method (\ref{eq:quad_method}) outperforms both the higher-order method (\ref{eq:quad_method_strict}) and the interval method from \cite{kuehn2018early} for most values of $\lambda$. In contrast, in the case of truncated normal noise, the higher-order method (\ref{eq:quad_method_strict}) gives more accurate results for small $\lambda$, while the leading-order method performs better for large $\lambda$.

A key challenge highlighted throughout this work is the 
reliance of data points in the time series near 
the boundary of the support of the stationary distribution. 
This set, however, is by definition a \textit{rare event}, and those data points are scarce and seldom observed in the time series. The main advantage of the asymptotic expansions \eqref{eq:first_order} and \eqref{eq:higher_order} is that they allow 
the use of information farther from the boundary point. We conjecture that the $\mathcal{O}(\ln(x-x_-))$ terms in \eqref{eq:higher_order} play a crucial role in improving the accuracy of Algorithm~\ref{alg:tailfit}.

\section*{Acknowledgments}
 KA and WHT have been supported by the Project of Intelligent Mobility Society Design, Social Cooperation Program, UTokyo, JST Moonshot R \& D Grant Number JPMJMS2021. G.O.-M. has been supported by the Deutsche Forschungsgemeinschaft (DFG) through grant CRC 1114 ``Scaling Cascades in Complex Systems'', Project Number 235221301, Project A02 ``Multiscale data and asymptotic model assimilation for atmospheric flows '', and through Germany's Excellence Strategy -- The Berlin Mathematics Research Center MATH+ EXC-2046/1, project 390685689,  subproject AA1-8. JSWL have been supported by the EPSRC grants EP/W009455/1 and  EP/Y020669/1, and acknowledges support from the EPSRC Centre for Doctoral Training in Mathematics of Random Systems: Analysis, Modelling and Simulation (EP/S023925/1) and thanks IRCN (Tokyo) and GUST (Kuwait) for their research support. KA has been supported by Institute of AI and Beyond of UTokyo, the International Research Center for Neurointelligence (WPI-IRCN) at The University of Tokyo Institutes for Advanced Study (UTIAS), JSPS KAKENHI Grant Number JP20H05921, Cross-ministerial Strategic Innovation Promotion Program (SIP) and the 3rd period of SIP “ Smart energy management system ” Grant Number JPJ012207.

\addcontentsline{toc}{section}{References}
\bibliography{reference}

\appendix
\section{Non-increasing variance example near tipping point}\label{appen:variance_decrease}

We present an example of a random  difference equation, where the variance of the time series does not provide a reliable early indicator of topological bifurcations, as illustrated in Figure~\ref{fig:variance} of the main text. The system is defined by
\begin{equation}
\label{eq:variance}
\begin{aligned}
    &y_{t+1} = f_a(y_t) + \xi_t,\\ 
    &\text{where } \;f_a(x) = f(\exp{(a)}x + g(a)) + h(a) + 0.5,\\
    & f(x) = 3\tanh{(x/2)},\\
    &g(x) = -0.72(x+0.8)^3 + 0.36,\\
    &h(x) = -0.2 (x+0.0011)^{1/3} + 0.021,  
    \end{aligned}
\end{equation}

\noindent with bounded noise $\|\xi_t\| \leq \varepsilon = 0.8$. The map $f_a$ is a modification of the nonlinear map (\ref{eq:non-linear_eq}) in the main text, constructed so that the attraction of the deterministic fixed point strengthens as $a$ increases, while the derivative $f'_a(x_-)$ at the left boundary $x_-$ of the support of the stationary distribution increases and approaches one.

\begin{figure}[!b]
    \begin{overpic}[width=.49\textwidth]{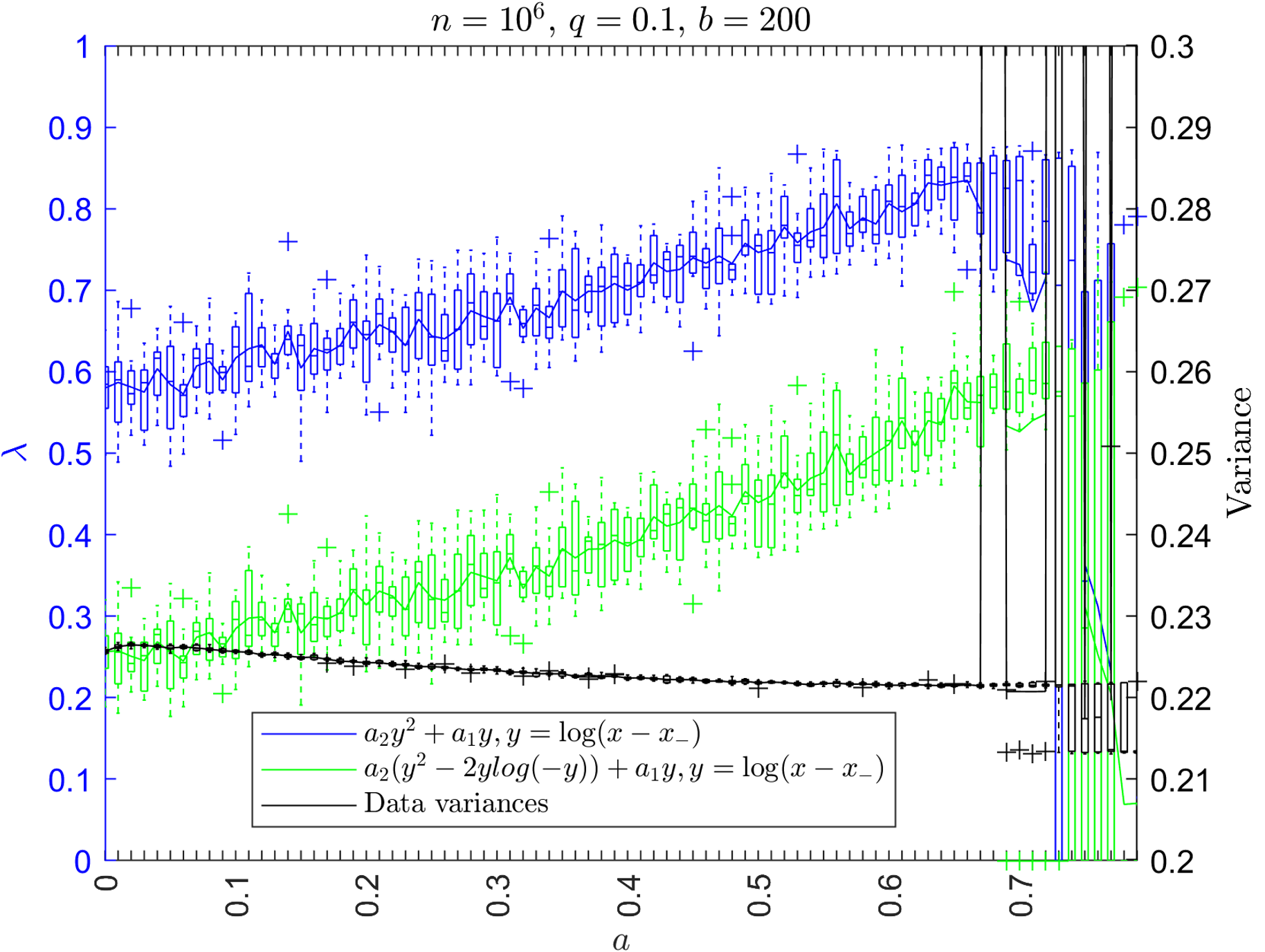}
    \put(0,0){
    (a)
    }
    \end{overpic}
    \begin{overpic}[width=.49\textwidth]{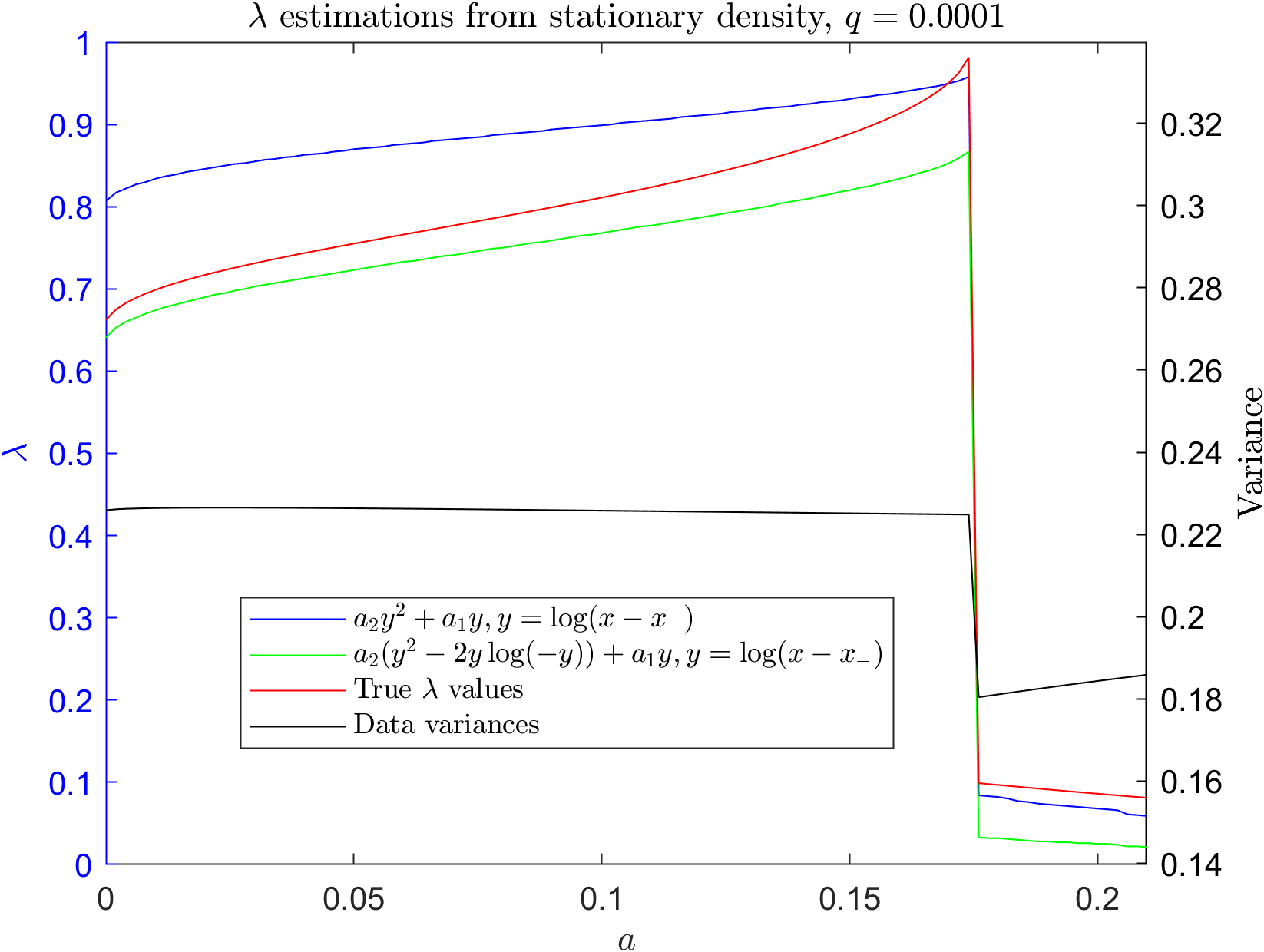}
    \put(0,0){
    (b)
    }
    \end{overpic}
\caption{Comparison between our proposed methods and the variance of data. Estimations of $\lambda$ using time series are shown in (a), while (b) estimates $\lambda$ from stationary density approximated using Ulam's method on the known support $[x_-,x_+]$ with $13$ subdivisions, thus $b = 2^{13}$, with $q = 0.0001$.}
\label{fig:variance_decrease_ex_result}
\end{figure}

We approximate the derivative of the lower extremal map $f_a(y)-\varepsilon$ at the left boundary $x_-$ using both the leading-order method (\ref{eq:quad_method}) and higher-order method (\ref{eq:quad_method_strict}). The results, computed with $n = 10^6, q = 0.1, n = 200$ are shown in Figure~\ref{fig:variance_decrease_ex_result}(a) below. They demonstrate that, although the variance of data decreases slightly (remaining almost constant), the estimators $\hat{\lambda}$ increase towards one as the system approaches the transition. As discussed in the main text, the actual transition of the data is observed only well after the topological bifurcation at $a\approx 0.175$, since trajectories remain in a meta-stable region. In Figure~\ref{fig:variance_decrease_ex_result}(b), the stationary density is approximated using Ulam's method under the assumption that the support boundaries are known. Consistent with the results presented in the main text, the higher-order method (\ref{eq:quad_method_strict}) provides more accurate estimates of $\lambda$ when the support boundaries are known. This also supports the expectation that, as $n \to \infty$, $b \to \infty$ and $q \to 0$, the estimator becomes increasingly accurate, i.e. $\hat{\lambda} \to \lambda$.

\section{Asymptotic expansion of the stationary density}
\label{APPENDIX:proof}
In this section we prove Theorem~\ref{THM:main}. We assume without loss of generality that $x_-=0$. Let $x_0$ be sufficiently close to $0$ such that \eqref{eq:main_ineqs} holds for all $x\in(0,x_0)$, where $n\equiv n_{x_0}^x$. We first prove the following auxiliary statement
\begin{proposition}
\label{PROP: expansion_factorial}
Let $n\equiv n_{x_0}^x$ as in \eqref{eq:hitting_time_det}. Then, as $x\rightarrow 0$,
\begin{equation}
    \label{eq:asymptotic_factorial}
\log n! = \left(\frac{1}{\log \lambda}\right) \log x \cdot \log\log \frac{1}{x} + \mathcal{O}(\log x).
\end{equation}
\end{proposition}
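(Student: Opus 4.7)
The plan is to first control the asymptotics of $n \equiv n_{x_0}^x$ itself as $x \to 0^+$ and then combine this with Stirling's formula to read off the claimed expansion of $\log n!$.

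\textbf{Step 1: Asymptotics of $n_{x_0}^x$.} Since $f_-$ is a $C^2$ diffeomorphism with $f_-(0)=0$ and $f_-'(0)=\lambda \in (0,1)$, the orbit $x_k := f_-^k(x_0)$ decreases monotonically to $0$ at a geometric rate. I would write $f_-(y) = \lambda y + g(y)$ with $g(y) = \mathcal{O}(y^2)$, and then show
\[
\log x_k = k\log\lambda + \log x_0 + \mathcal{O}(1),
\]
uniformly in $k$. Concretely, $\log(x_{k+1}/x_k) = \log(\lambda + g(x_k)/x_k) = \log\lambda + \mathcal{O}(x_k)$; summing this telescopically and using that $x_k$ is geometrically small controls the error by a convergent geometric series. (Alternatively, one can invoke Koenigs' linearization theorem for $C^2$ contractions to obtain $x_k/\lambda^k \to c \neq 0$.) By definition of $n = n_{x_0}^x$, we have $x_{n-1} \geq x > x_n$, so applying the above with $k=n-1$ and $k=n$ and solving for $n$ yields
\[
n = \frac{\log x}{\log \lambda} + \mathcal{O}(1) \quad \text{as } x \to 0^+.
\]

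\textbf{Step 2: Stirling's formula.} Since $n \to \infty$ as $x \to 0^+$, Stirling's approximation gives
\[
\log n! = n\log n - n + \mathcal{O}(\log n).
\]
I would then substitute the estimate from Step 1. Writing $\log(1/x) = -\log x$ and $\log(1/\lambda) = -\log\lambda$ (both positive for small $x$ and $\lambda \in (0,1)$), we get
\[
\log n = \log\left(\frac{\log(1/x)}{\log(1/\lambda)} + \mathcal{O}(1)\right) = \log\log\tfrac{1}{x} + \mathcal{O}(1),
\]
so that $\log n = \mathcal{O}(\log\log(1/x))$, which is in particular $\mathcal{O}(\log x)$ trivially but is sharper.

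\textbf{Step 3: Assembly.} Plugging Step 1 into Step 2,
\[
n\log n = \left(\frac{\log x}{\log\lambda} + \mathcal{O}(1)\right)\!\left(\log\log\tfrac{1}{x} + \mathcal{O}(1)\right) = \frac{1}{\log\lambda}\log x \cdot \log\log\tfrac{1}{x} + \mathcal{O}(\log x),
\]
where the cross terms contribute at most $\mathcal{O}(\log x)$ or $\mathcal{O}(\log\log(1/x))$. The linear term $-n = \mathcal{O}(\log x)$ and the Stirling remainder $\mathcal{O}(\log n) = \mathcal{O}(\log\log(1/x)) \subset \mathcal{O}(\log x)$ are likewise absorbed. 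This yields \eqref{eq:asymptotic_factorial}.

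\textbf{Main obstacle.} The only subtlety is Step 1: one needs the $\mathcal{O}(1)$ control of $\log x_k - k\log\lambda$, not just a $o(k)$ estimate, since a weaker bound would produce an $o(\log x \cdot \log\log(1/x))$ error rather than the claimed $\mathcal{O}(\log x)$. The $C^2$ regularity of $f$ makes this routine via the summability of $\sum x_k$, but it is the only place where the hypothesis on $f$ is used in a non-trivial way.
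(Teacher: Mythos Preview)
Your proposal is correct and follows essentially the same route as the paper: derive $n = \frac{\log x}{\log\lambda} + \mathcal{O}(1)$ and hence $\log n = \log\log\tfrac{1}{x} + \mathcal{O}(1)$, then feed these into the Stirling-type expansion $\log n! = n\log n - n + \text{lower order}$. The only cosmetic differences are that the paper cites the scaling law for $n$ from an external lemma rather than deriving it as you do in Step~1, and it obtains the $n\log n - n$ bounds by sandwiching $\sum_{j=1}^n \log j$ between $\int_1^n \log s\,ds$ and $\int_2^{n+1}\log s\,ds$ rather than invoking Stirling by name.
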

\begin{proof}
From \cite[Lemma 4.1]{olicon24tail}, the scaling law for $n$ as $x\rightarrow0$ is expressed as 
\begin{equation}
    \label{eq:scalinglaw_n}
    n=\log x\left[ \frac{1}{\log\lambda} + \mathcal{O}\left( \frac{1}{\log x}\right)\right] = \log\left(\frac{1}{x}\right)\left[  \frac{1}{\log \frac{1}{\lambda}} + \mathcal{O}\left( \frac{1}{\log x}\right)\right],
\end{equation}
so that by taking $\log$ on both sides one gets
\begin{equation}
    \label{eq:log_n_asymptotic}
    \log n = \log\log \frac{1}{x} + \mathcal{O}(1).
\end{equation}
We expand $\log n!$ as a sum, so that
\[
    \log n! =\sum_{j=1}^n \log j \leq \int_2^{n+1}\log s\;ds = (n+1)\log(n+1)-(n+1)-2\log 2+2.
\]
Using \eqref{eq:scalinglaw_n} and \eqref{eq:log_n_asymptotic}, it yields
\[
    \log n! \leq \frac{\log x \log\log\frac{1}{x}}{\log \lambda} + \mathcal{O}(\log x).
\]

Analogously, for the lower bound,
\[
    \log n! \geq \int_1^n{\log s\;ds} = n\log n-n+1 = \frac{\log x \log\log\frac{1}{x}}{\log \lambda} + \mathcal{O}(\log x),
\]
which implies that \eqref{eq:asymptotic_factorial} holds.
\end{proof}

As an auxiliary result, we show that the stationary density is nondecreasing near its left boundary $x_-=0$. We start by noticing that continuity of $p$ and $p(-\varepsilon)=p(x-f(y))\vert_{y=f_-^{-1}(x)}>0$ implies that there exist $C_1,C_2>0$ and $x_0$ sufficiently close to $0$, such that  the stationary density satisfies that
 \begin{equation}
 \label{eq:iterate1}
    C_1\int_0^{f_-^{-1}(x)} \phi(y)dy
 \leq \phi(x) \leq C_2\int_0^{f_-^{-1}(x)} \phi(y)dy,
  \end{equation}
for any $x\in[0,x_0]$. We use the estimates above in the following statements.
\begin{proposition}
    \label{PROP:density_decreasing}
The stationary density $\phi$ is nondecreasing near $0$.
 \end{proposition}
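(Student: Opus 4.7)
The strategy is to combine the two-sided bound \eqref{eq:iterate1} with the expanding nature of $f_-^{-1}$ at the stable fixed point $0$. Because $(f_-^{-1})'(0) = 1/\lambda > 1$ and $p(x-f(y))$ is close to the positive constant $p(-\varepsilon)$ for $(x,y)$ in a small neighbourhood of $(0,0)$, the right-hand side of the integral representation \eqref{eq:phi_near_0} should grow strictly with $x$, forcing $\phi$ itself to be nondecreasing.

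I would begin by establishing the boundary behaviour. Since $p \in C^1$ and the upper limit $f_-^{-1}(x)$ is $C^1$ in $x$, the identity \eqref{eq:phi_near_0} implies $\phi$ is continuous on $[0,x_0]$, and sending $x \to 0^+$ together with $f_-^{-1}(0) = 0$ yields $\phi(0) = 0$. Next, by uniform continuity of $p$ on the shrinking interval $x - f(y) \in [-\varepsilon, x_0-\varepsilon]$, the constants $C_1,C_2$ in \eqref{eq:iterate1} can be chosen arbitrarily close to $p(-\varepsilon)/\varepsilon$, and $(f_-^{-1})'(x)$ arbitrarily close to $1/\lambda$, uniformly for $x \in [0,x_0]$, provided $x_0$ is taken sufficiently small.

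The core step is to differentiate \eqref{eq:phi_near_0} using Leibniz's rule and the identity $x - f(f_-^{-1}(x)) = -\varepsilon$, giving
\begin{equation*}
\varepsilon \phi'(x) = p(-\varepsilon)(f_-^{-1})'(x)\,\phi(f_-^{-1}(x)) + \int_0^{f_-^{-1}(x)} p'(x-f(y))\,\phi(y)\,dy.
\end{equation*}
The first (boundary) term is strictly positive. To control the integral term, I would integrate by parts using $p'(x-f(y)) = -\frac{1}{f'(y)}\frac{d}{dy} p(x-f(y))$. The boundary contribution at $y = f_-^{-1}(x)$ produces $-p(-\varepsilon)\phi(f_-^{-1}(x))/f'(f_-^{-1}(x))$, which cancels exactly the Leibniz boundary term because $(f_-^{-1})'(x) = 1/f'(f_-^{-1}(x))$; the contribution at $y=0$ vanishes because $\phi(0)=0$. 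One is left with the reduced identity
\begin{equation*}
\varepsilon \phi'(x) = \int_0^{f_-^{-1}(x)} p(x-f(y))\left(\frac{\phi(y)}{f'(y)}\right)' dy.
\end{equation*}

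The final step transfers the monotonicity question from $\phi$ to $\phi/f'$ and closes by a bootstrap. Since $\phi(0)=0$, $\phi \geq 0$, and $f'$ is positive and monotone near $0$ by hypothesis, $\Psi := \phi/f'$ satisfies $\Psi(0)=0$, $\Psi \geq 0$, so $\Psi'(0) \geq 0$. Suppose for contradiction that $\phi$ is not nondecreasing on $[0,x_0]$; then the reduced identity shows that $\Psi'$ must change sign on some subinterval, and using $(\phi/f')' = \phi'/f' - \phi f''/(f')^2$ together with the smallness of $\phi$ near $0$ (quantified by $\int_0^{f_-^{-1}(x)}\phi \leq \phi(x)/C_1$) and the definite sign of $f''$, one derives a contradiction with \eqref{eq:iterate1} for $x_0$ small enough. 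The main obstacle is making this bootstrap quantitatively sharp: one must ensure that the factor $1/\lambda > 1$ supplied by the expansion dominates the correction $\phi f''/(f')^2$ uniformly on $[0,x_0]$, which may require iterating the reduced identity once more and exploiting that $\phi$ decays at the super-polynomial rate already implicit in \eqref{eq:iterate1}.
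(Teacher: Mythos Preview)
Your Leibniz computation and the integration by parts are both correct, and the reduced identity
\[
\varepsilon\,\phi'(x)=\int_0^{f_-^{-1}(x)} p\bigl(x-f(y)\bigr)\,\bigl(\phi(y)/f'(y)\bigr)'\,dy
\]
is valid. The genuine gap is the bootstrap that follows. The identity expresses $\phi'(x)$ through $(\phi/f')'$ on $[0,f_-^{-1}(x)]$, which is \emph{strictly larger} than $[0,x]$ because $f_-^{-1}$ is expanding at $0$. So any inductive scheme runs in the wrong direction: knowing $\phi'\ge 0$ near $0$ does not propagate outward, and knowing it on a larger interval is precisely what you are trying to prove. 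Your contradiction sketch does not escape this: if $\phi'(x)<0$ somewhere, the identity only says $(\phi/f')'$ is negative somewhere on $[0,f_-^{-1}(x)]$; but $(\phi/f')'=\phi'/f'-\phi f''/(f')^2$ can be negative with $\phi'\ge 0$ when $f''>0$, and the integrated bound $\int_0^{f_-^{-1}(x)}\phi\le \phi(x)/C_1$ controls the $\phi f''$ piece only in $L^1$, saying nothing about the sign of $\int p\,\phi'/f'$. In fact, integrating that last term by parts simply returns you to the original Leibniz formula, so the reduced identity is a reformulation rather than a simplification. Finally, appealing to the ``super-polynomial decay implicit in \eqref{eq:iterate1}'' is dangerous: the iterated lower bound of Lemma~\ref{LEMMA:main} and the lower estimate in Theorem~\ref{THM:main} already \emph{use} the monotonicity of $\phi$ near $0$, so invoking them here would be circular.

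The paper avoids the integration by parts altogether. From the same Leibniz formula it crudely bounds the $p'$-integral to obtain
\[
\phi'(x)\ \ge\ B_1(x)\,\phi\bigl(f_-^{-1}(x)\bigr)\ -\ B_2(x)\,\phi(x),
\]
with $B_1(x)\to p(-\varepsilon)/(\varepsilon\lambda)$ and $B_2(x)\to \Vert p'\Vert_\infty/p(-\varepsilon)$ as $x\to 0$. Positivity of $\phi'$ then follows once one shows $\phi(f_-^{-1}(x))/\phi(x)\to\infty$. That is obtained by iterating \eqref{eq:iterate1} one more time and exchanging the order of integration, yielding $\phi(x)\le C\,f_-^{-1}(x)\,\phi(f_-^{-1}(x))$. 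This single quantitative comparison between $\phi$ at $x$ and at $f_-^{-1}(x)$ is the missing ingredient in your plan; with it no bootstrap is needed.
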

\begin{proof}

We show that for all $x$ sufficiently close to $0$, $\phi'(x)>0$. By Leibniz integral rule, 
\begin{equation}
    \label{eq:derivative_phi}
\phi'(x)= \frac{p(-\varepsilon) \phi(f_-^{-1}(x))}{\varepsilon f_-'(f_-^{-1}(x))} + \frac{1}{\varepsilon}\int_0^{f_-^{-1}(x)}p'(x-f(y))\phi(y)dy.
\end{equation}

In order to bound $\phi'(x)$ from below, let
\begin{equation}
    B_1(x):= \frac{p(-\varepsilon)}{\varepsilon f_-'(f_-^{-1}(x))}.
\end{equation}
We multiply and divide by $p(x-f(y))$ in the integrand in \eqref{eq:derivative_phi}, so that
\begin{align*}
    p'(x-f(y)) & \geq -\Vert p'\Vert_\infty  = -\frac{p(x-f(y))\cdot \Vert p' \Vert_\infty}{p(x-f(y))} \geq \\
    & \geq -\frac{ \Vert p'\Vert_\infty }{\inf_{y\in [0,f_-^{-1}(x)]}p(x-f(y))} \; p(x-f(y)) \equiv -B_2(x) p(x-f(y)).
\end{align*}
Using the above inequality in \eqref{eq:derivative_phi} yields
\[   
\phi'(x) \geq B_1(x) \phi(f_-^{-1}(x)) - \frac{B_2(x)}{\varepsilon} \int_0^{f_-^{-1}(x)}p(x-f(y))\phi(y) dy = B_1(x) \phi(f_-^{-1}(x)) - B_2(x)\phi(x),
\]
where the last equality, follows from \eqref{eq:phi_near_0}.
%where $B_1(x) = p(-\varepsilon)/(\varepsilon f'_-(f^{-1}_-(x)))\rightarrow p(-\varepsilon)/(\varepsilon \lambda)$ and $B_2(x) = \Vert p'\Vert_\infty/\inf_{y\in[0,f_-^{-1}(x)]}
%(
%p(x-f(y)))$ $\rightarrow\Vert p'\Vert_\infty/p(-\varepsilon)$ as $x\rightarrow 0$. 
Hence, a sufficient condition for $\phi'(x)$ to be positive is that
\begin{equation}
      \label{eq:cond_positive_derivative}
\frac{\phi(f_-^{-1}(x))}{\phi(x)}  > \frac{B_2(x)}{B_1(x)}.
\end{equation}

We show this is the case for $x$ sufficiently close to $0$, by showing that $\phi(f_-^{-1}(x))/\phi(x) \rightarrow \infty$ as $x\rightarrow 0$, since 
\[
\lim_{x\to 0}\frac{B_2(x)}{B_1(x)}= \frac{\varepsilon \lambda\Vert p'\Vert_\infty}{p(-\varepsilon)^2}<\infty.\] Indeed, assuming that \eqref{eq:iterate1} holds for any $x\in[0,x_0]$, if $f_-^{-1}(x)< x_0$, then we can iterate \eqref{eq:iterate1} to obtain, as an upper bound,
\[
    \phi(x)\leq C_2^2 \int_0^{f_-^{-1}(x)}\int_0^{f_-^{-1}(y)} \phi(z) dz dy.
\]
Exchanging the order of integration, one gets
\begin{equation}
\label{eq:step2}
    \phi(x)\leq C_2^2\int_0^{f_-^{-2}(x)} \int_{f_-(z)}^{f_-^{-1}(x)} \phi(z)dydz =
    C_2^2\int_0^{f_-^{-2}(x)} \left( f_-^{-1}(x)-f_-(z) \right) \phi(z)dz.
\end{equation}
From above, it follows that
\[
\phi(x) \leq \frac{C_2^2 f_-^{-1}(x)}{B_3(x)} \int_0^{f_-^{-2}(x)}p(f_-^{-1}(x)-f(y))\phi(y) dy = \frac{\varepsilon C_2^2 f_-^{-1}(x)}{B_3(x)} \phi(f_-^{-1}(x)),
\]
where 
\[
B_3(x) = \sup_{y\in[0,f_-^{-2}(x)]}
p(f^{-1}_-(x)-f(y)) \rightarrow p(-\varepsilon),\] 
as $x\rightarrow 0$. Therefore,
\[
    \frac{\phi(f_-^{-1}(x))}{\phi(x)} \geq \frac{B_3(x)}{\varepsilon C_2^2 f_-^{-1}(x)} \rightarrow \infty,
\]
as $x\rightarrow 0$. Hence, for sufficiently small $x$, \eqref{eq:cond_positive_derivative} holds, and the result follows.
\end{proof}

In order to prove Theorem~\ref{THM:main}, we assume for now that $f_-'$ is increasing near $0$, for which we can prove Lemma~\ref{LEMMA:main}. Otherwise, one can assume that $f_-'$ is either decreasing or constant around $0$ and formulate an analogous statement, and proceed with similar arguments.

\begin{proof}[Proof of Lemma~\ref{LEMMA:main}]
The rest of the proof follows closely \cite[Lemma 3.1(a)]{olicon24tail}.
Let $x_0$ be sufficiently small, and $x\in[0,x_0]$. Analogously as \eqref{eq:step2}, if $f_-^{-1}(x)< x_0$, we can iterate \eqref{eq:iterate1} to obtain, as a lower bound,
\[
    \phi(x)\geq C_1^2\int_0^{f_-^{-2}(x)} \int_{f_-(z)}^{f_-^{-1}(x)} \phi(z)dydz =
    C_1^2\int_0^{f_-^{-2}(x)} \left( f_-^{-1}(x)-f_-(z) \right) \phi(z)dz.
\]
By performing the change of variables $y=f_-(z)$, due to the monotonicity of $f_-'$, it yields
\begin{align*}
    \phi(x) & \geq C_1^2 \int_0^{f_-^{-1}(x)} \frac{f_-^{-1}(x)-y}{f_-'(f_-^{-1}(y))}\cdot \phi(f_-^{-1}(y))dy \\
    & \geq C_1^2 \left(\frac{1}{f_-'(f_-^{-2}(x))}\right) \int_0^{f_-^{-1}(x)} \left(f_-^{-1}(x)-y\right) \phi(f_-^{-1}(y))dy.
\end{align*}
If $f_-^{-2}(x)<x_0$, we can plug \eqref{eq:iterate1} in the integrand to obtain
\[
    \phi(x)\geq C_1^3 \left(\frac{1}{f_-'(f_-^{-2}(x))}\right)
    \int_0^{f_-^{-1}(x)}\int_0^{f_-^{-2}(y)}(f_-^{-1}(x)-y)\phi(z)dzdy.
\]
Again, by exchanging the order of integration, 
\begin{align*}
\phi(x) & \geq \frac{C_1^3}{1} \left(\frac{1}{f_-'(f_-^{-2}(x))}\right)
    \int_0^{f_-^{-3}(x)}\int_{f_-^2(z)}^{f_-^{-1}(x)}(f_-^{-1}(x)-y)\phi(z)dydz \\
    & \geq \frac{C_1^3}{1\cdot 2} \left(\frac{1}{f_-'(f_-^{-2}(x))}\right)
    \int_0^{f_-^{-3}(x)}(f_-^{-1}(x)-f_-^2(z))^2\phi(z)dz \\
    & \geq \frac{C_1^3}{1\cdot 2} \left(\frac{1}{f_-'(f_-^{-2}(x))}\right)
    \left(\frac{1}{f_-'(f_-^{-2}(x)) f_-'(f_-^{-3}(x))}\right)
    \int_0^{f_-^{-1}(x)}(f_-^{-1}(x)-y)^2\phi(f_-^{-2}(y))dy.
\end{align*}

Iterating this process for $n-1$ times, where $n=n_{x_0}^x$, the lower bound in \eqref{eq:main_ineqs} is obtained. The upper bound can be deduced in a similar fashion.
\end{proof}

\begin{proof}[Proof of Theorem~\ref{THM:main}]
We proceed by obtaining upper and lower bounds for $\log\phi(x)$. Let $x_0>0$ be as in the proof of Lemma~\ref{LEMMA:main}, and assume in particular that the extremal map $f_-$, its inverse $f_-^{-1}$, and its derivative $f_-'$, satisfy
\begin{equation}
    \label{eq:estimates_f-}
    \begin{split}
      \lambda x + \alpha_1 x^2 \leq  f_-(x) \leq\lambda x + \alpha_2 x^2 \\
      \frac{1}{\lambda}x +\beta_1 x^2 \leq  f_-^{-1}(x)  \leq \frac{1}{\lambda}x +\beta_2 x^2 \\
    \lambda +2\alpha_1 x \leq  f_-'(x) \leq \lambda + 2\alpha_2 x,
    \end{split}
\end{equation}
for all $x\in[0,x_0]$, where $\alpha_1,\alpha_2,\beta_1,\beta_2\in\mathbb{R}$.

\noindent\textit{Upper bound.} We start the proof by taking $\log$ in \eqref{eq:main_ineqs} so that
\begin{equation}
    \label{eq:main_ineq_ln}
    \log\phi(x)\leq n\log C_2-\log n! -\frac{n(n-1)}{2}\log\lambda+ n\log f_-^{-1}(x) + \log\phi_0,
\end{equation}
where $\phi_0:=\phi(x_0)$. Since $f_-^{-1}(x)=\frac{x}{\lambda}+o(x)$, due to \eqref{eq:scalinglaw_n}, it yields that
\begin{align*}
    \log\phi(x)\leq & -\log n!+ \frac{\log^2 x}{2\log \lambda} + \mathcal{O}(\log x).
\end{align*}
Hence, due to \eqref{eq:asymptotic_factorial},
\begin{equation}
\label{eq:upper_bound}
    \log\phi(x) \leq a_2\log^2x -2a_2 \log x\cdot \log\log\frac{1}{x}+\mathcal{O}(\log x). 
\end{equation}

\noindent\textit{Lower bound.}  We bound from below the integral in \eqref{eq:main_ineqs} by integrating on the subinterval $[x,f_-^{-1}(x)]$. Hence,
\begin{align*}
     \int_{0}^{f_-^{-1}(x)} (f_-^{-1}(x)-y)^{n-1}\phi(f_-^{-n+1}(y))dy \geq & \phi(f_-^{-n+1}(x)) \cdot\int_{x}^{f_-^{-1}(x)}(f_-^{-1}(x)-y)^{n-1}dy   \\
     \geq & \frac{(f_-^{-1}(x)-x)^n \cdot \phi(f_-(x_0))}{n},
\end{align*}
where in both inequalities the monotonicity of $\phi$ near $0$ was used. Therefore, we obtain from \eqref{eq:main_ineqs} the lower bound
\begin{equation}
    \label{eq:first_step}
    \phi(x) \geq \frac{\hat{C}_1^n x^n}{n!} \left[ \prod_{m=2}^n \prod_{j=2}^m \frac{1}{f_-'(f_-^{-j}(x))} \right]  \phi(f_-(x_0)),
\end{equation}
where $\hat{C}_1:= C_1\left( \frac{1-\lambda}{\lambda}-\delta\right)$, where $\delta>0$ is a sufficiently small constant. Taking $\log$ in \eqref{eq:first_step}, it follows that
\begin{equation}
    \label{eq:estimate_dummy}
    \begin{split}
    \log\phi(x) & \geq n\log x -\log n! -\sum_{m=2}^n\sum_{j=2}^m \log f_-'(f_-^{-j}(x))  + \mathcal{O}(\log x) \\
     & = \frac{\log^2 x}{\log \lambda} - \frac{\log x \cdot \log\log \frac{1}{x}}{\log \lambda} -\sum_{m=2}^n\sum_{j=2}^m \log f_-'(f_-^{-j}(x))  + \mathcal{O}(\log x),
    \end{split}
\end{equation}
 where the last equality follows from \eqref{eq:asymptotic_factorial} and\eqref{eq:scalinglaw_n}.

 From \eqref{eq:estimates_f-}, for all $j=2,\ldots, m$, the following estimate holds: 
 \[
    \log f_-'(f_-^{-j}(x))\leq \log\lambda + \log\left(1+ \frac{2\alpha_2}{\lambda} f_-^{-j}(x)\right).
 \]
This implies that
\begin{align*}
   \sum_{m=2}^n\sum_{j=2}^m \log f_-'(f_-^{-j}(x)) & \leq \frac{n(n-1)}{2}\log\lambda  + \sum_{m=2}^n \sum_{j=2}^m \log\left(1+ \frac{2\alpha_2}{\lambda} f_-^{-j}(x)\right)
   \\
    & \leq \frac{\log^2 x}{2\log \lambda} + \sum_{m=2}^n \sum_{j=2}^m \log\left(1+ \frac{2\vert \alpha_2\vert}{\lambda} f_-^{-j}(x)\right) + \mathcal{O}(\log x),
\end{align*}
since $n = \log x / \log \lambda + \mathcal{O}(\log x)$ in (\ref{eq:scalinglaw_n}). We show that the double sum above is $\mathcal{O}(\log x)$. 
For simplicity, we denote $\tilde{\alpha}:=2\vert\alpha_2 \vert/\lambda$, and $\Delta f_-^{-j}\equiv f_-^{-j-1}(x)-f_-^{-j}(x)$. Then, 
\begin{align*}
\sum_{j=2}^{m} \log \left(1+ \tilde{\alpha} f_-^{-j}(x)\right) & = \sum_{j=2}^{m} \frac{\log \left(1+ \tilde{\alpha} f_-^{-j}(x)\right)}{f_-^{-1}(f_-^{-j}(x)) -f_-^{-j}(x)} \Delta f_-^{-j} \leq 
\sum_{j=2}^{m} \frac{\log \left(1+ \tilde{\alpha} f_-^{-j}(x)\right)}{(\frac{1}{\lambda}-1)f_-^{-j}(x) + \beta_1 f_-^{-j}(x)^2} \Delta f_-^{-j}\\ 
& \leq 
\tilde{\alpha}\lambda \sum_{j=2}^m \frac{1}{1-\lambda- \lambda\vert \beta_1 \vert f_-^{-j}(x)} \Delta f_-^{-j},
\end{align*}
where we used in the last inequality that $\log(1+z) \leq z$, and the denominator is positive whenever $x_0$ is sufficiently small. By estimating the sum with a Riemann integral, we obtain that 
\begin{equation}
\label{eq:logestimate_step}
\begin{split}
\sum_{j=2}^{m} \log \left(1+ \tilde{\alpha} f_-^{-j}(x)\right) & \leq 
\tilde{\alpha} \lambda\int_{f_-^{-2}(x)}^{f_-^{-m-1}(x)} \frac{ds}{1-\lambda -\lambda\vert \beta_1\vert s} \\
& = \frac{\tilde{\alpha}}{\vert\beta_1 \vert} \left[ \log \left(1-\frac{\lambda \vert \beta_1\vert}{1-\lambda} f_-^{-2}(x)\right) - \log \left(1-\frac{\lambda \vert \beta_1\vert}{1-\lambda} f_-^{-m-1}(x) \right) \right].
\end{split}
\end{equation}

Therefore, 
\[
\sum_{m=2}^n\sum_{j=2}^{m} \log \left(1+ \tilde{\alpha} f_-^{-j}(x)\right) \leq A \left[  (n-1)\log \left(  1 - B f_-^{-1}(x)\right)- \sum_{m=2}^n \log \left( 1 - Bf_-^{-m-1}(x) \right) \right],
\]
where $A=\tilde{\alpha}/\vert\beta_1\vert$ and $B=\lambda\vert\beta_1\vert/(1-\lambda)$. Since $\log(1-Bf_-^{-m}(x))\geq \log(1- B f_-^2(x_0))$ for all $m\leq n= n_{x_0}^x$, then
\begin{align*}
\sum_{m=2}^n\sum_{j=2}^{m} \log \left(1+ \tilde{\alpha} f_-^{-j}(x)\right) & \leq A(n-1)\left[ \log \left(  1-B f_-^{-1}(x)\right) - \log(1- B f_-(x_0))\right] = \mathcal{O}(\log x),
\end{align*}
since $\log(1+Bf_-^{-1}(x)) = \mathcal{O}(x)$, and $n=\mathcal O(\log x)$.

Finally, by using the last inequality in \eqref{eq:estimate_dummy}, we obtain
\begin{equation}
    \label{eq:lower_bound}
    \log\phi(x) \geq \frac{\log^2 x}{2\log \lambda} - \frac{\log x \cdot \log \log\frac{1}{x}}{\log \lambda} + \mathcal{O}(\log x),
\end{equation}
and the result follows as the combination of \eqref{eq:upper_bound} and \eqref{eq:lower_bound}.
\end{proof}

\section{Optimum hyperparameters}\label{appen:optimum}

When approximating $\lambda$ using Algorithm~\ref{alg:tailfit}, three hyperparameters come into play, namely
\begin{enumerate}
    \item $n\in\mathbb{N}$, the total number of data points,
    \item $b< n$, the number of bins used in the histogram, and
    \item $q\in(0,1)$, the {quantile} used in the estimation of $\lambda$.
\end{enumerate}
A hyperparameter is said to be \emph{optimal} if it minimises the \emph{root mean square error (RMSE)},

\[\textup{RMSE}=\sqrt{\frac{1}{m}\sum_{i = 1}^m(\lambda_i-\hat{\lambda}_i)^2},\] 
over $m$ different parameter values $(\lambda_i)_{1\leq i\leq m}$, while holding the other hyperparameters constant.
Determining the optimal values of $b$ and $q$ for a given sample size $n$ is crucial for applying the proposed method effectively. To illustrate this, we focus on the case where the left boundary point $x_-$ is unknown and is estimated as described in Subsection~\ref{sec:unknown}. 

\subsection{Exploring different values of $b$ and $q$}
\label{sec:optimum}

We perform a case study on the nonlinear map (\ref{eq:non-linear_eq}) with uniform noise in $[-0.1,0.1]$ for $-0.5\leq a \leq 0.31$, i.e. before the bifurcation point. Following Algorithm~\ref{alg:tailfit}, we apply both the leading-order method (\ref{eq:quad_method}) and the higher-order method (\ref{eq:quad_method_strict}). Our first objective is to investigate the optimal number of histogram bins $b\in [20,500]$ for estimating $\lambda$ with $n = 10^5$ sample points generated from (\ref{eq:non-linear_eq}).
We consider three fixed quantiles, $q = 0.1, 0.35$, and $0.6$. For each pair $(b,q)$,
we compute $\hat{\lambda}$ and evaluate the resulting RMSE as a function of $b$ (see Figure~\ref{fig:optimum_finding}(a)). The dependence on $b$ differs greatly between the two methods:
for the higher-order method (\ref{eq:quad_method_strict}), small $b$ values yield the lowest RMSE, wheareas for the leading-order method (\ref{eq:quad_method}) the RMSE remains low across a wide range $b \in [80,500]$.

To explore the dependence on $q$, we fix $b=100, 200, 300$ and compute the RMSE of $\hat{\lambda}$ for $q \in [0.01, 0.8]$ (see Figure~\ref{fig:optimum_finding}(b)). For the leading-order method (\ref{eq:quad_method}), the RMSE is consistently small for $0.5 < q < 0.8$, while for the higher-order method (\ref{eq:quad_method_strict}) there is a minimum at $q \approx 0.2$. The resulting optimal choices are $(b,q) \approx (150,0.7)$ for the leading-order method and $(b,q) \approx (20,0.2)$ for the higher-order method.

We next consider the case where the noise distribution is truncated normal and repeat the numerical experiments. The results are shown in Figure~\ref{fig:optimum_finding_truncated}. In this setting, the RMSE as a function of $b$ in Figure~\ref{fig:optimum_finding_truncated}(a) exhibits a trend similar to the uniform noise case in Figure~\ref{fig:optimum_finding}(a). Likewise, the dependence of RMSE on $q$ for the higher-order method (\ref{eq:quad_method_strict}) in Figure~\ref{fig:optimum_finding_truncated}(b) follows the trend of the uniform noise results in Figure~\ref{fig:optimum_finding}(b), with a minimum near $q \approx 0.4$. In contrast, for the leading-order method (\ref{eq:quad_method}), the RMSE reaches its minimum around $q \approx 0.01$ in Figure~\ref{fig:optimum_finding_truncated}(b), which differs significantly from the uniform noise, where the optimum occurs at large $q$ in Figure~\ref{fig:optimum_finding}(b). The corresponding optimal hyperparameters  are $(b,q) \approx (>500,0.01)$ for the leading-order method, and $(b,q) \approx (20, 0.4)$ for the higher-order method.

We also examine the case where the true boundary $x_-$ is known. The RMSE across different hyperparameter values $b$ and $q$ is presented in Figure~\ref{fig:optimum_finding_true} for uniform noise and Figure~\ref{fig:optimum_finding_truncated_true} for truncated normal noise. 

\begin{figure}[!ht]
\begin{overpic}[width=.49\textwidth]{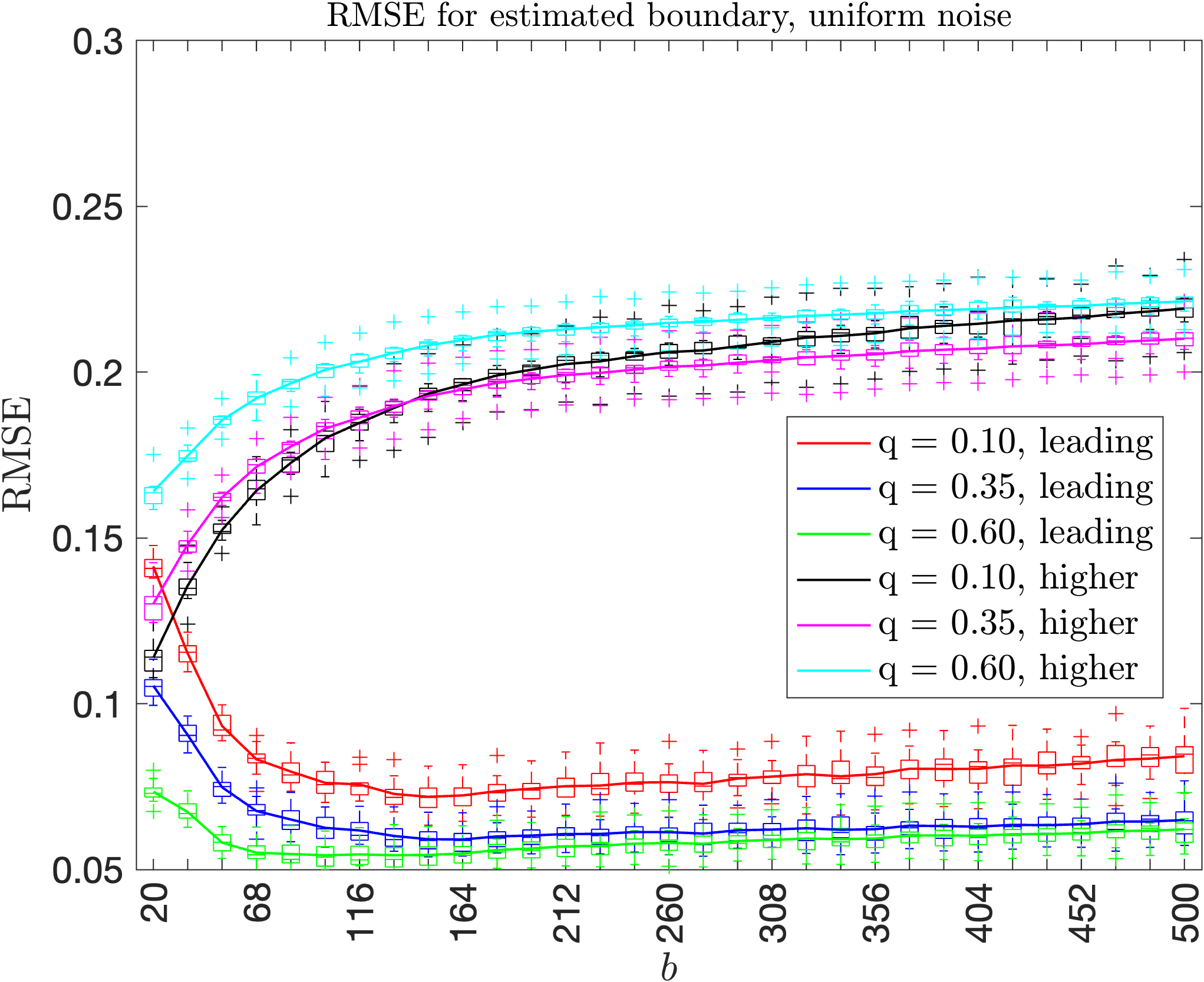}
    \put(0,0){
    (a)
    }
    \end{overpic}
    \begin{overpic}[width=.49\textwidth]{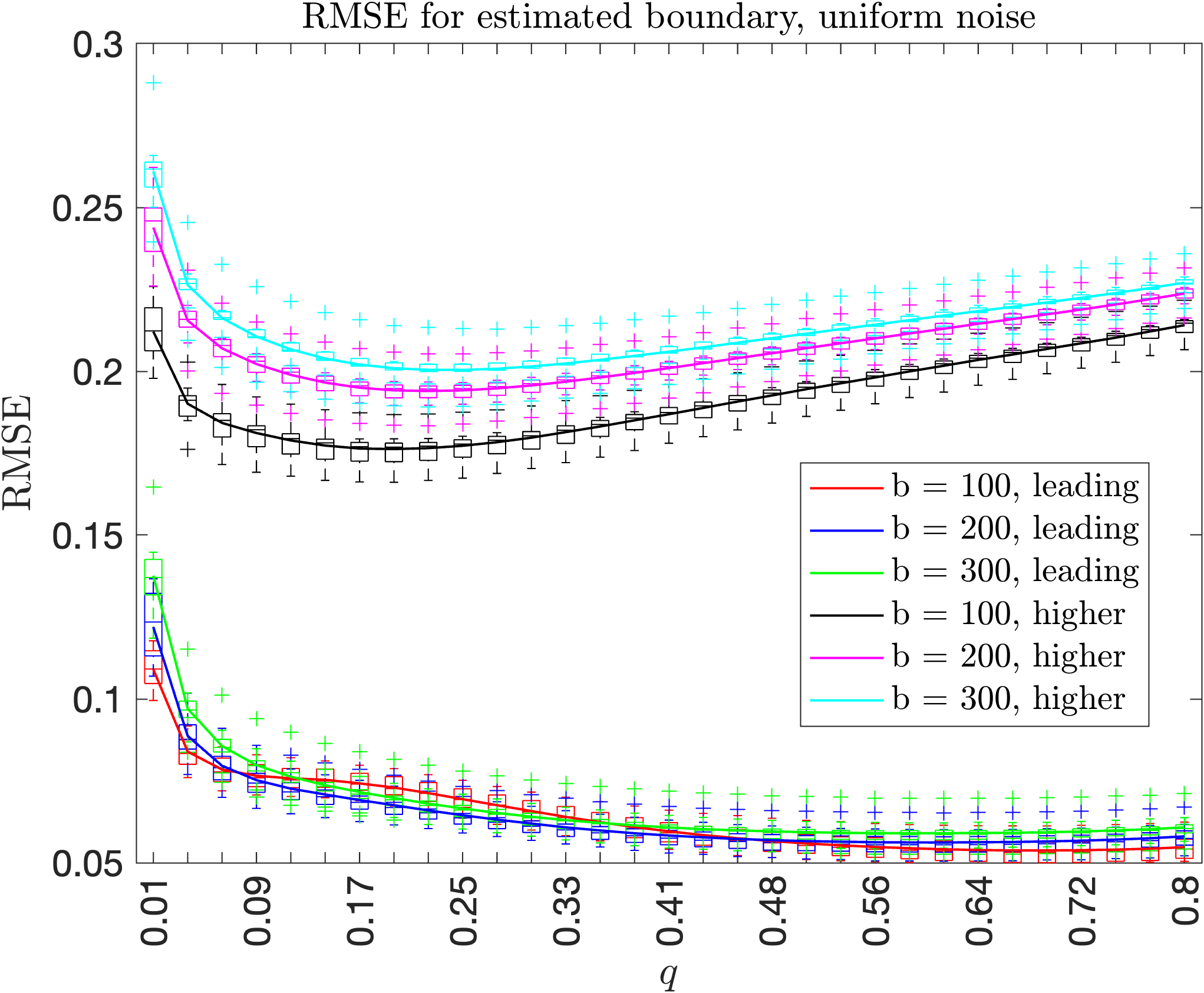}
    \put(0,0){
    (b)
    }
    \end{overpic}
\caption{Computation of root mean square errors (RMSE) of $\lambda$ approximations with $n = 10^5$ of the nonlinear map (\ref{eq:non-linear_eq}) with uniform noise $\xi_t \in [-0.1,0.1]$ for 10 distinct noise realisations, using both methods (\ref{eq:quad_method}) and (\ref{eq:quad_method_strict}) to find 1) the optimum number of histogram bins $b$ while fixing $q=0.1, 0.35$ or $0.6$ quantile in Figure~\ref{fig:optimum_finding}(a) and 2) the optimal $q$-quantile of data while fixing $b = 100, 200$ or $300$ in (b).}
\label{fig:optimum_finding}

\begin{overpic}[width=.49\textwidth]{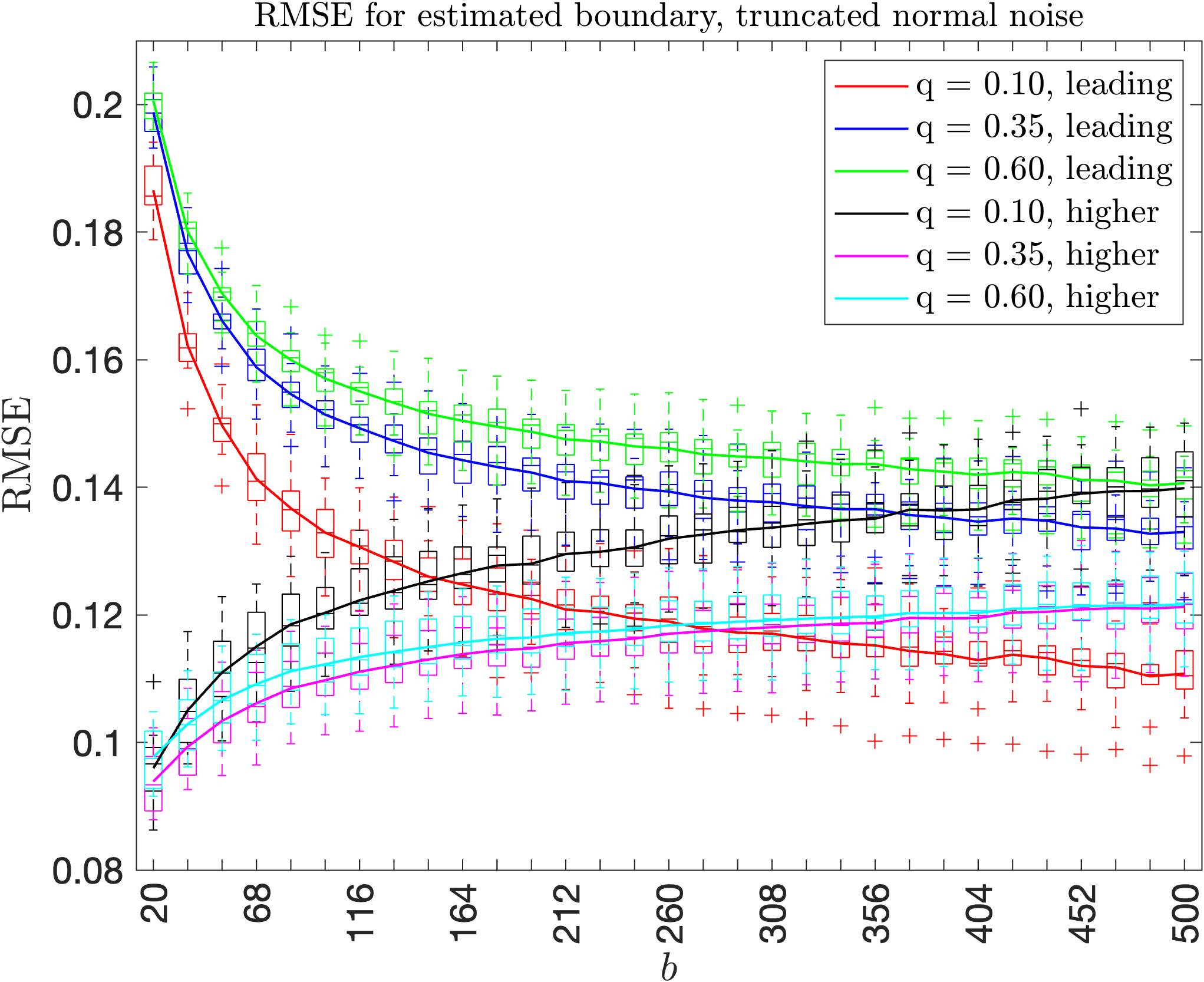}
    \put(0,0){
    (a)
    }
    \end{overpic}
    \begin{overpic}[width=.49\textwidth]{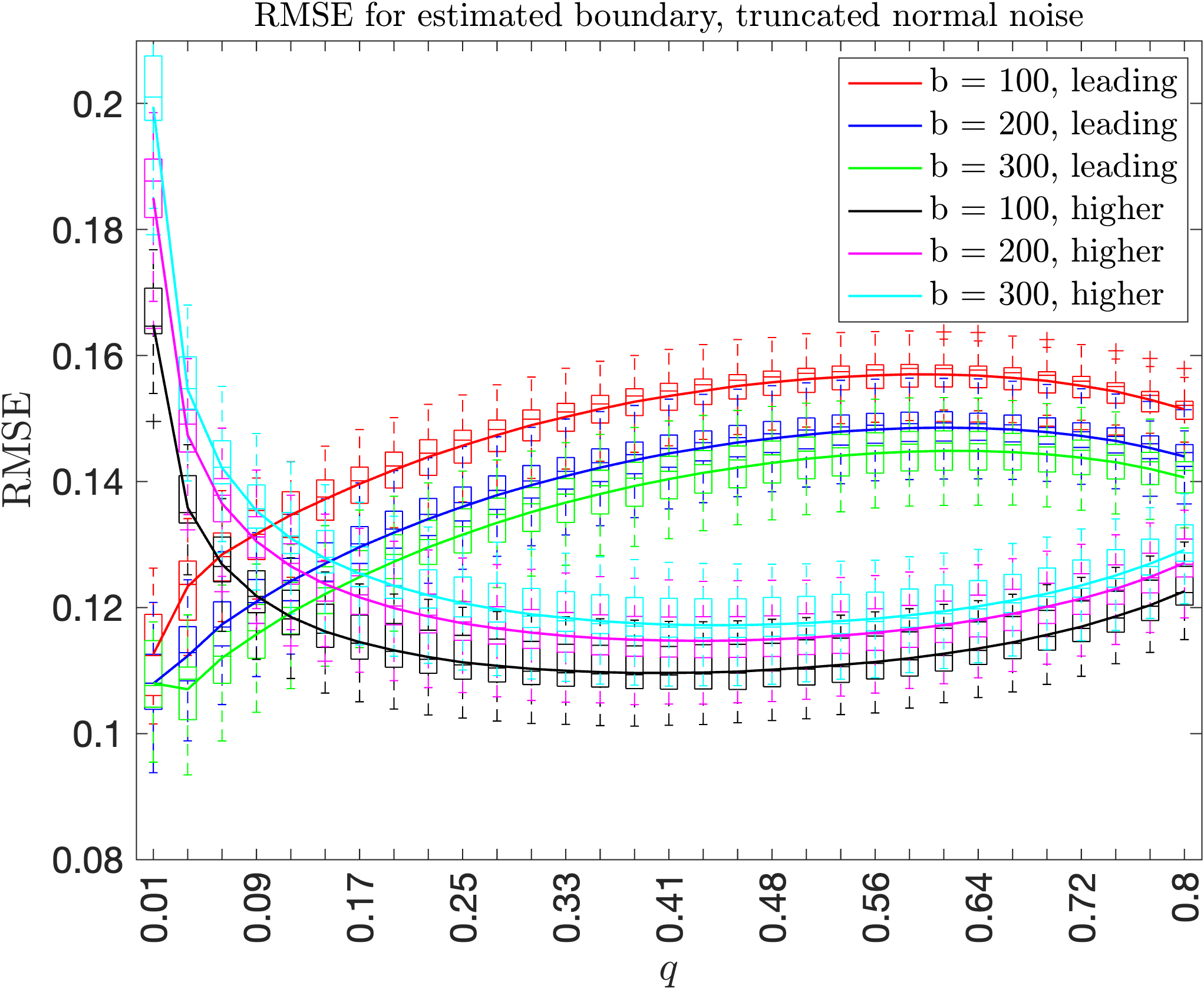}
    \put(0,0){
    (b)
    }
    \end{overpic}
\caption{RMSE of $\lambda$ approximations with $n = 10^5$ of the nonlinear map (\ref{eq:non-linear_eq}) with truncated normal noise $\xi_t \in [-0.1,0.1]$ following similar procedures to Figure~\ref{fig:optimum_finding}.}
\label{fig:optimum_finding_truncated}
\end{figure}

\begin{figure}[!h]
\begin{overpic}[width=.49\textwidth]{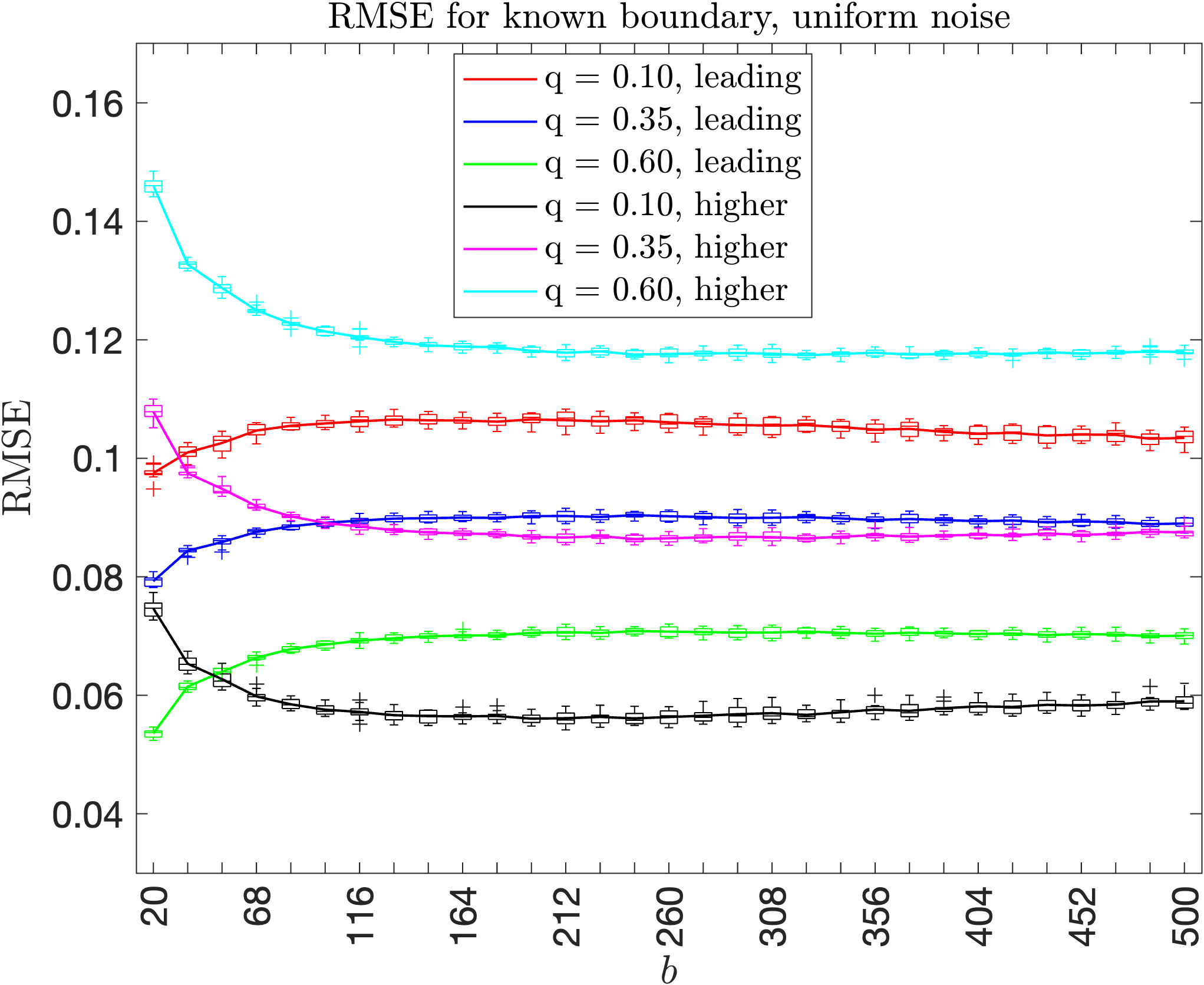}
\put(0,0){
(a)
}
\end{overpic}
\begin{overpic}[width=.49\textwidth]{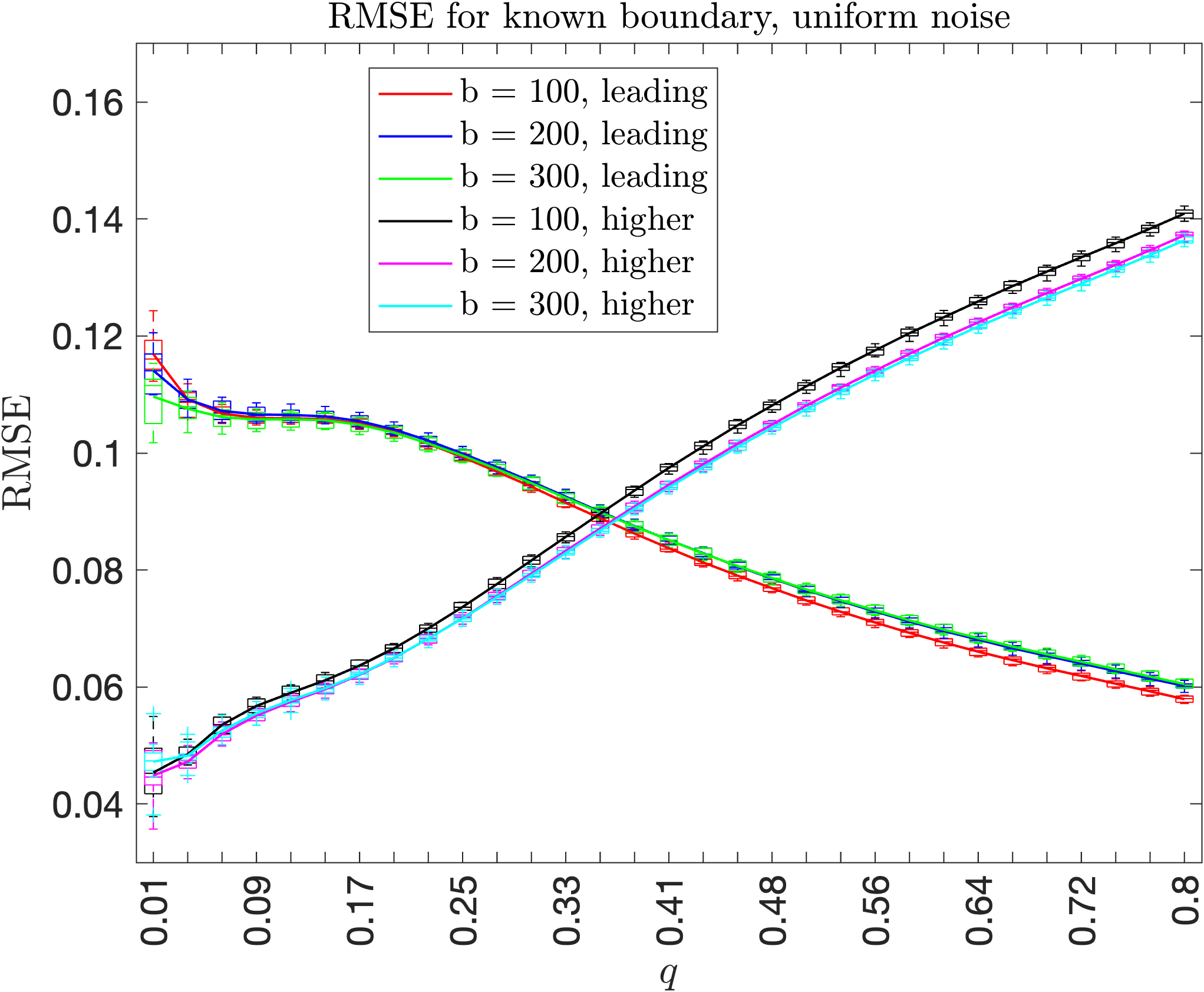}
\put(0,0){
(b)
}
\end{overpic}
\caption{RMSE of $\lambda$ approximations with $n = 10^5$ of the nonlinear map (\ref{eq:non-linear_eq}) with uniform noise $\xi_t \in [-0.1,0.1]$ for 10 distinct noise realisations, using both methods (\ref{eq:quad_method}) and (\ref{eq:quad_method_strict}) for different values of $b$ and $q$, where true boundary is used, i.e. $\hat{x}_- = x_-$. }
\label{fig:optimum_finding_true}

\begin{overpic}[width=.49\textwidth]{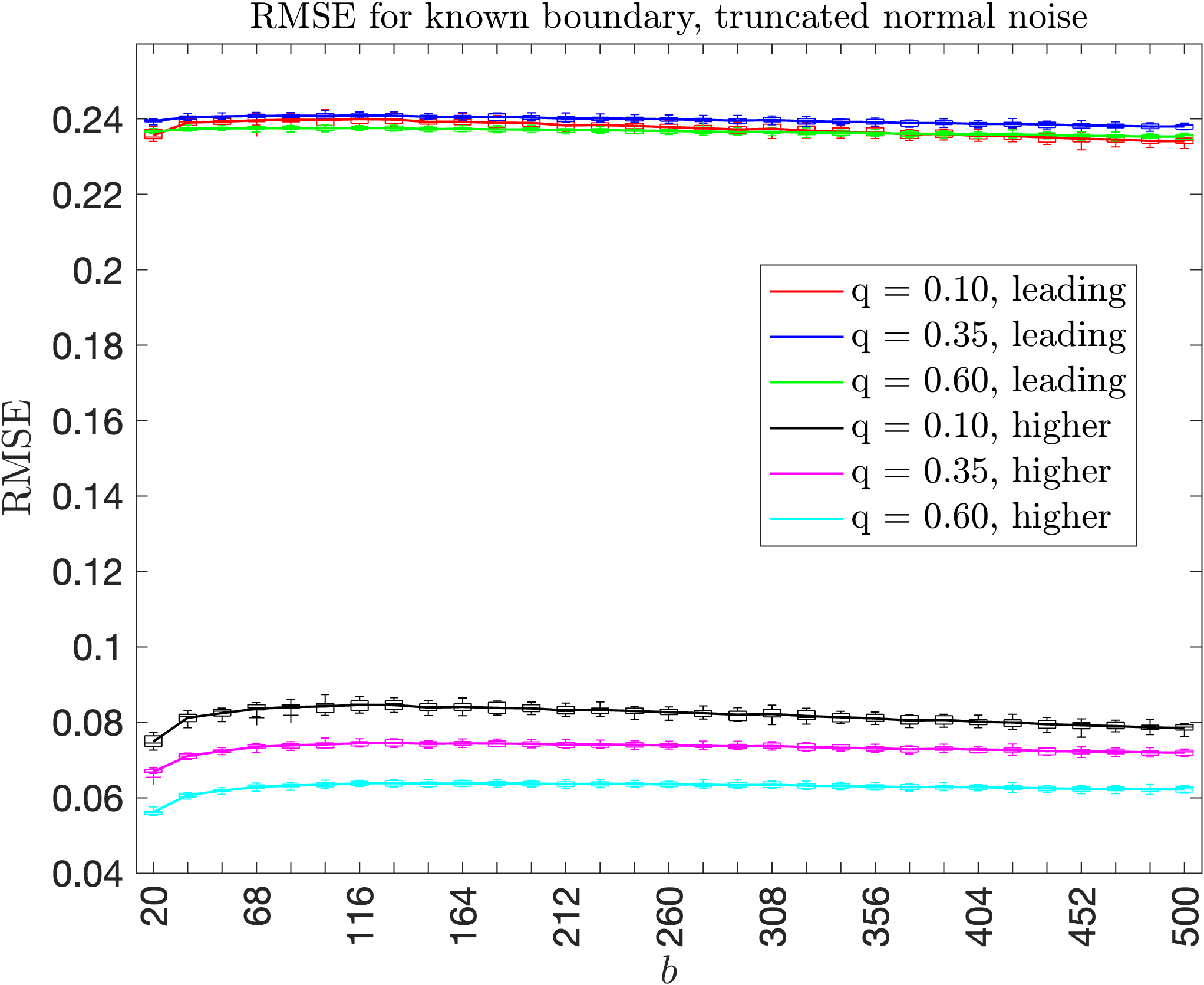}
\put(0,0){
(a)
}
\end{overpic}
\begin{overpic}[width=.49\textwidth]{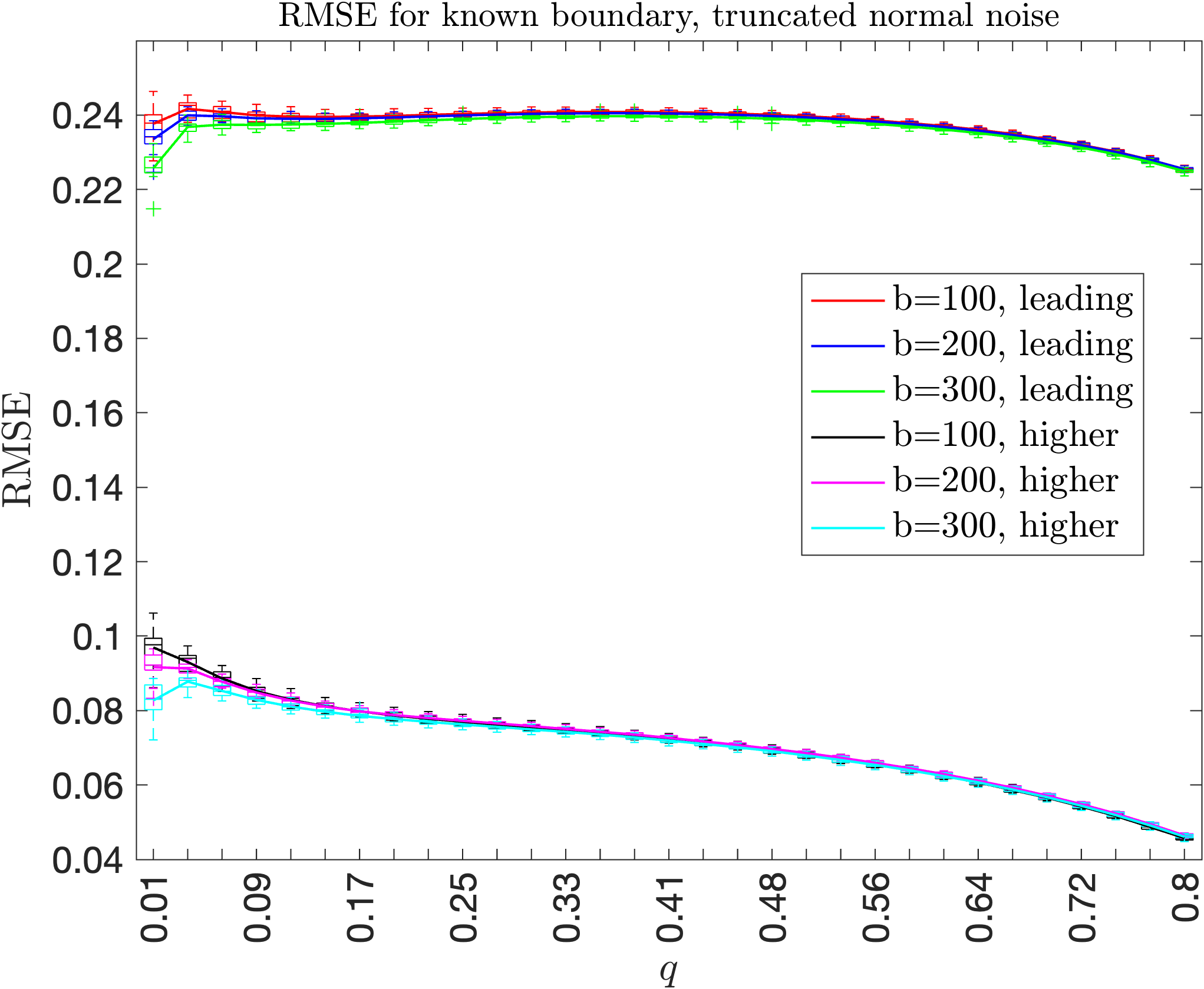}
\put(0,0){
(b)
}
\end{overpic}
\caption{RMSE of $\lambda$ approximations with $n = 10^5$ of the nonlinear map (\ref{eq:non-linear_eq}) with truncated normal noise $\xi_t \in [-0.1,0.1]$ following similar procedures to Figure~\ref{fig:optimum_finding_true}.}
\label{fig:optimum_finding_truncated_true}
\end{figure}

\begin{figure}[b!]
\begin{overpic}[width=.49\textwidth]{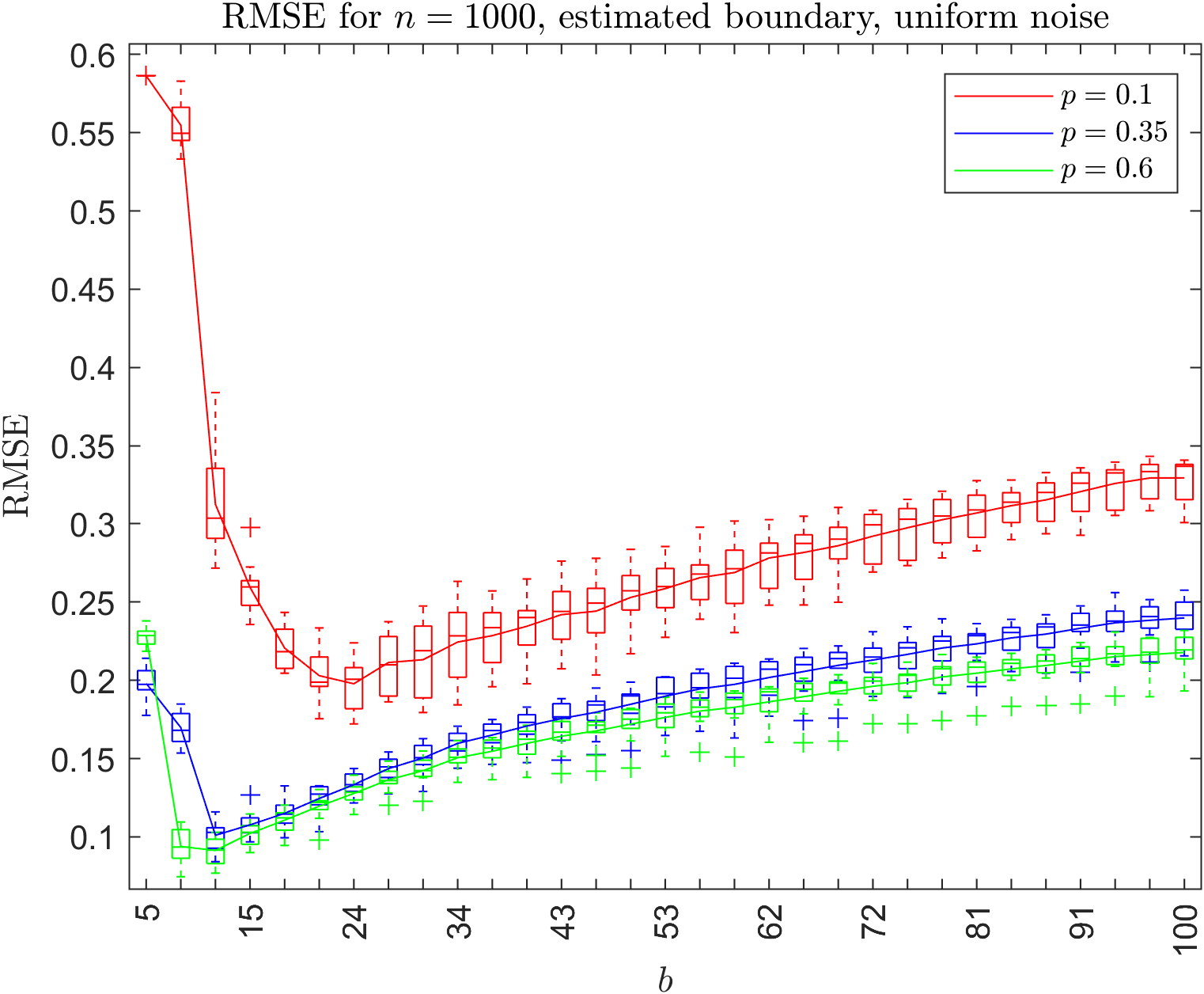}
\put(0,0){
(a)
}
\end{overpic}
\begin{overpic}[width=.49\textwidth]{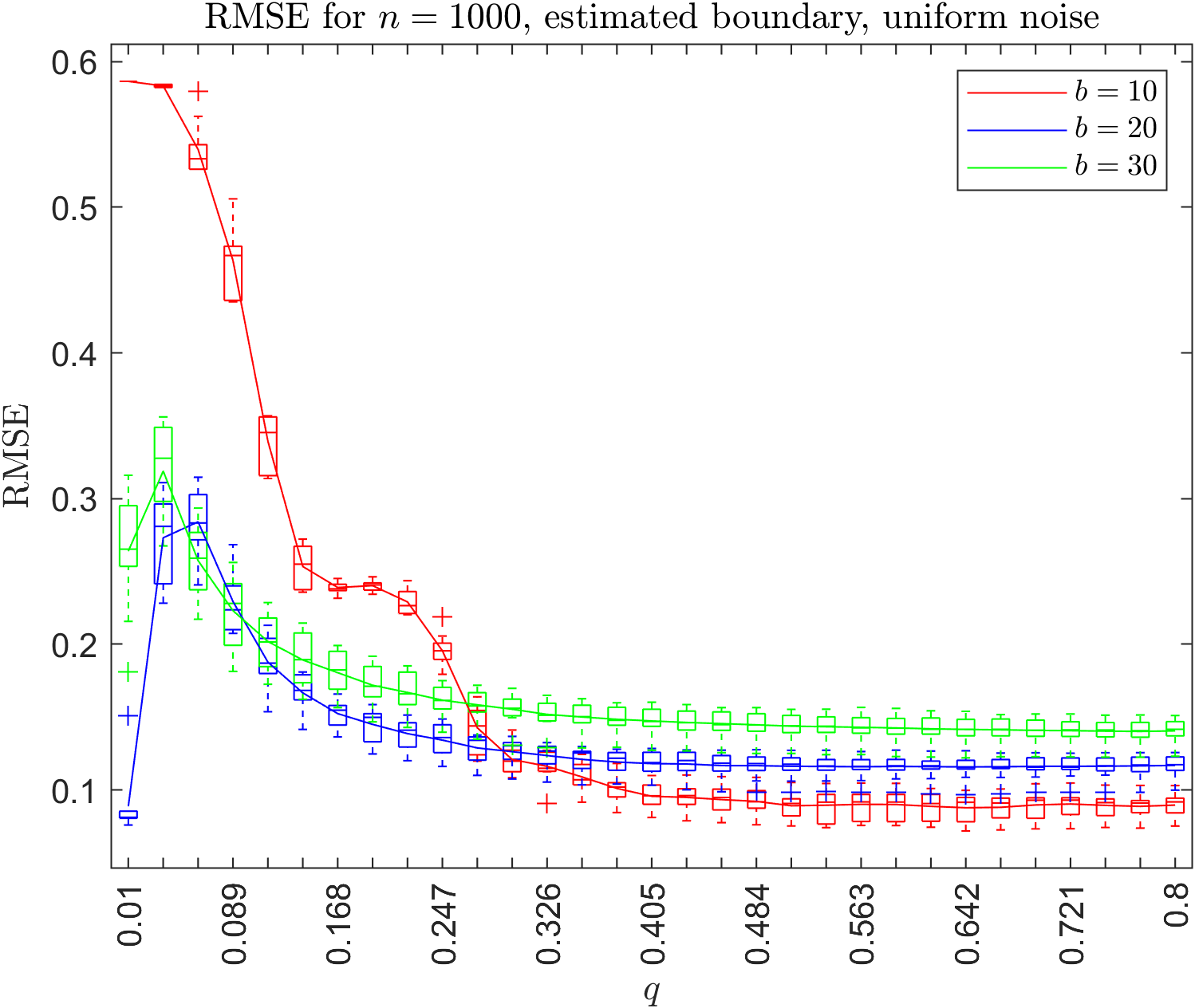}
\put(0,0){
(b)
}
\end{overpic}
\caption{RMSE computations for $n = 1000$ data points for the nonlinear map with uniform noise in (\ref{eq:non-linear_eq}), for various hyperparameter values $b\in [5,100]$ and $q\in[0.01,0.8]$, using the leading-order method (\ref{eq:quad_method}).}
\label{fig:optimum_1000}
\end{figure}

For uniform noise (Figure~\ref{fig:optimum_finding_true}(a)), small $b$ values yield smaller RMSE for the leading-order method, while the higher-order method achieves its minimum at $b \approx 250$. This pattern is reversed when the boundary is estimated, as in Figure~\ref{fig:optimum_finding}(a). Regarding the dependence of $q$ in Figure~\ref{fig:optimum_finding_true}(b), the leading-order method improves as $q$ increases, while the higher-order method worsens. For $q<0.35$, the higher-order method performs better, while for $q>0.35$ the leading-order method dominates. These results highlight the strong influence of $q$ on accuracy compared to $b$.

For truncated normal noise with known boundary, the choice of $b$ has little impact on the accuracy
as evident from Figure~\ref{fig:optimum_finding_truncated_true}(a). In contrast, increasing $q$ improves performance for both methods
as illustrated in Figure~\ref{fig:optimum_finding_truncated_true}(b). From both figures, it is evident that the higher-order method consistently outperforms the leading-order method across $b \in [20,500]$ and $q\in [0.01,0.8]$.

Overall, the results demonstrate that the optimal hyperparameters depend not only on the noise distributions but also on the chosen method and whether the true boundary is known. In practical settings where only the time series is available and the underlying system is unknown, we recommend choosing moderate values, such as $b = 200$ and $q = 0.3$ (used in the main text), provided that sufficiently large samples are available (e.g. $n = 10^5$). We discuss next on the performance of the proposed method when $n$ is small.

\begin{figure}[b!]
\begin{overpic}[width=.49\textwidth]{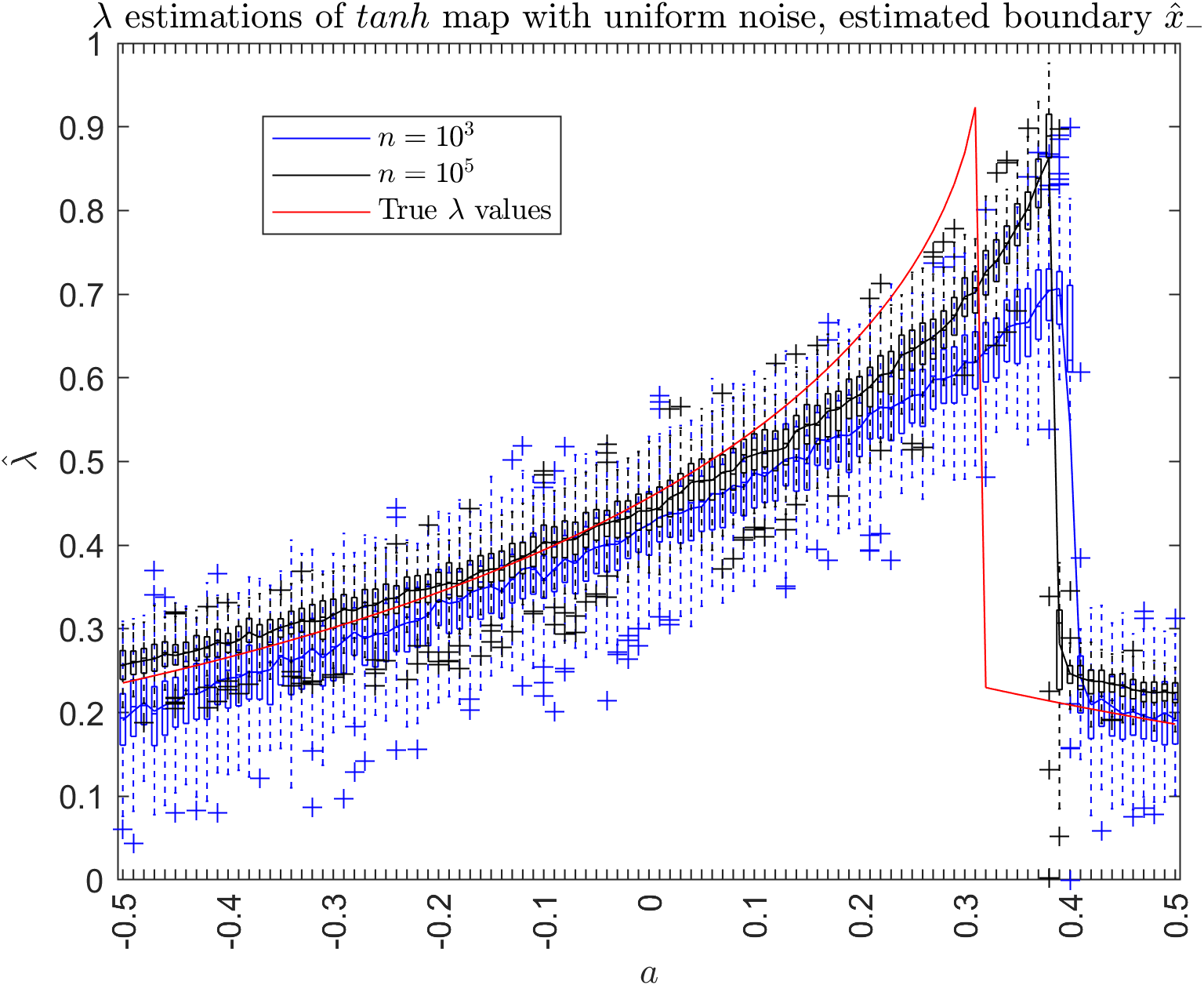}
\put(0,0){
(a)
}
\end{overpic}
\begin{overpic}[width=.49\textwidth]{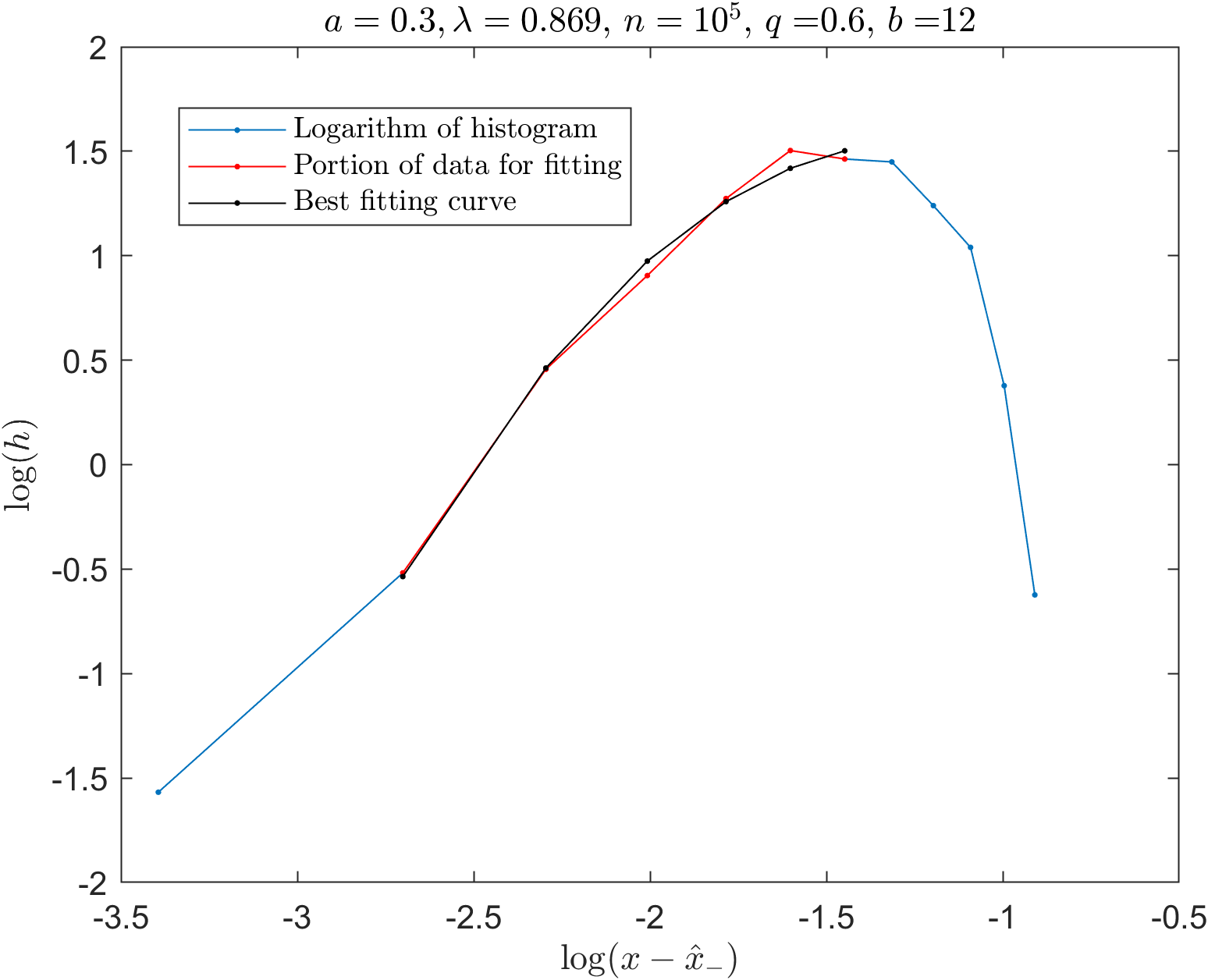}
\put(0,0){
(b)
}
\end{overpic}
\caption{Estimations of $\lambda$ using leading-order method (\ref{eq:quad_method}) following Algorithm~\ref{alg:tailfit} for $n = 1000$ using $b = 12,$ and $q = 0.6$ illustrated as blue box plot in (a), together with the results for $n = 10^5$ with $b = 200,$ and $q = 0.3$ as black box plot. The optimisation method (\ref{eq:quad_method}) for $n=1000$ at $a = 0.3$ only uses 5 histogram data points out of the $12$ total histogram bins as illustrated in (b) plotted in the logarithm scale.}
\label{fig:data1000}
\end{figure}

\subsection{Lower number of data points}\label{sec:low_number}

The observation of large optimal values of $q$ suggests that $\lambda$ can still be well approximated even when a large quantile of the data is chosen. To assess robustness, we validate that the approximations obtained with Algorithm~\ref{alg:tailfit} remain accurate even when a limited number of data $n$ are available and explore the limit of $n$ when this fails. 

We reduce the number of data points to $n = 10^3$ for the nonlinear system (\ref{eq:non-linear_eq}) with uniform noise. The optimal hyperparameters for the leading-order method (\ref{eq:quad_method}) are found to be $b = 12$ and $q = 0.6$ (see Figure~\ref{fig:optimum_1000}). Using these hyperparamters, the resulting $\lambda$ approximations are shown in Figure~\ref{fig:data1000}(a). They closely resemble those obtained for $n =10^5$, which are superimposed for comparison.

Because of the limited sample size,
the number of histogram bins $b$ must also be kept low. In this case, we choose $b = 12$, so only 12 histogram data points $(x_i,h_i)_{1\leq i\leq 12}$ are available. For example, when $a = 0.3$, only 5 histogram data points are used in the optimisation using (\ref{eq:quad_method}), as illustrated in Figure~\ref{fig:data1000}(b).

\begin{figure}[b!]
\begin{overpic}[width=.49\textwidth]{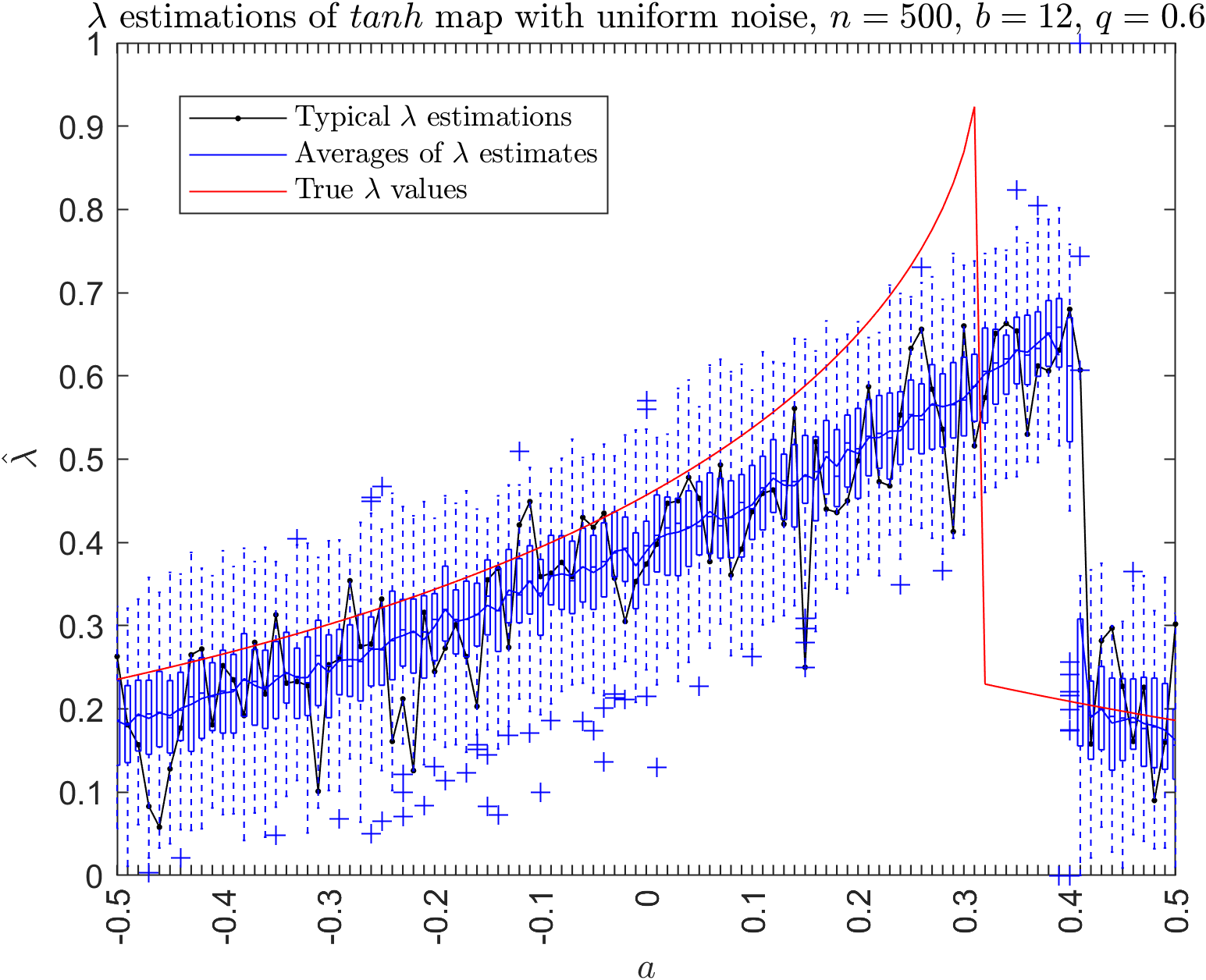}
\put(0,0){
(a)
}
\end{overpic}
\begin{overpic}[width=.49\textwidth]{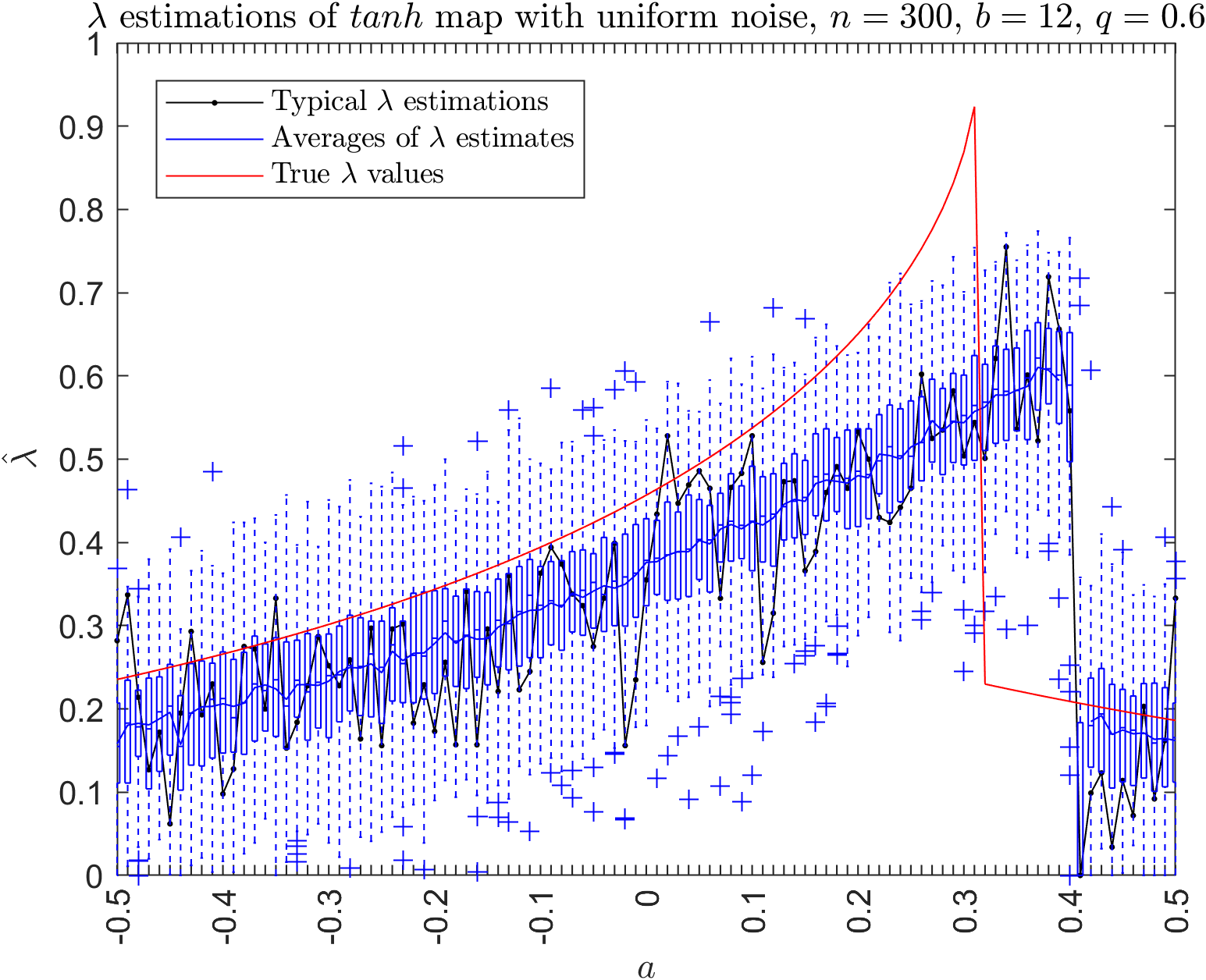}
\put(0,0){
(b)
}
\end{overpic}
\begin{minipage}{.007\textwidth}
    \text{}
\end{minipage}
\vspace{.3em}

\begin{overpic}[width=.49\textwidth]{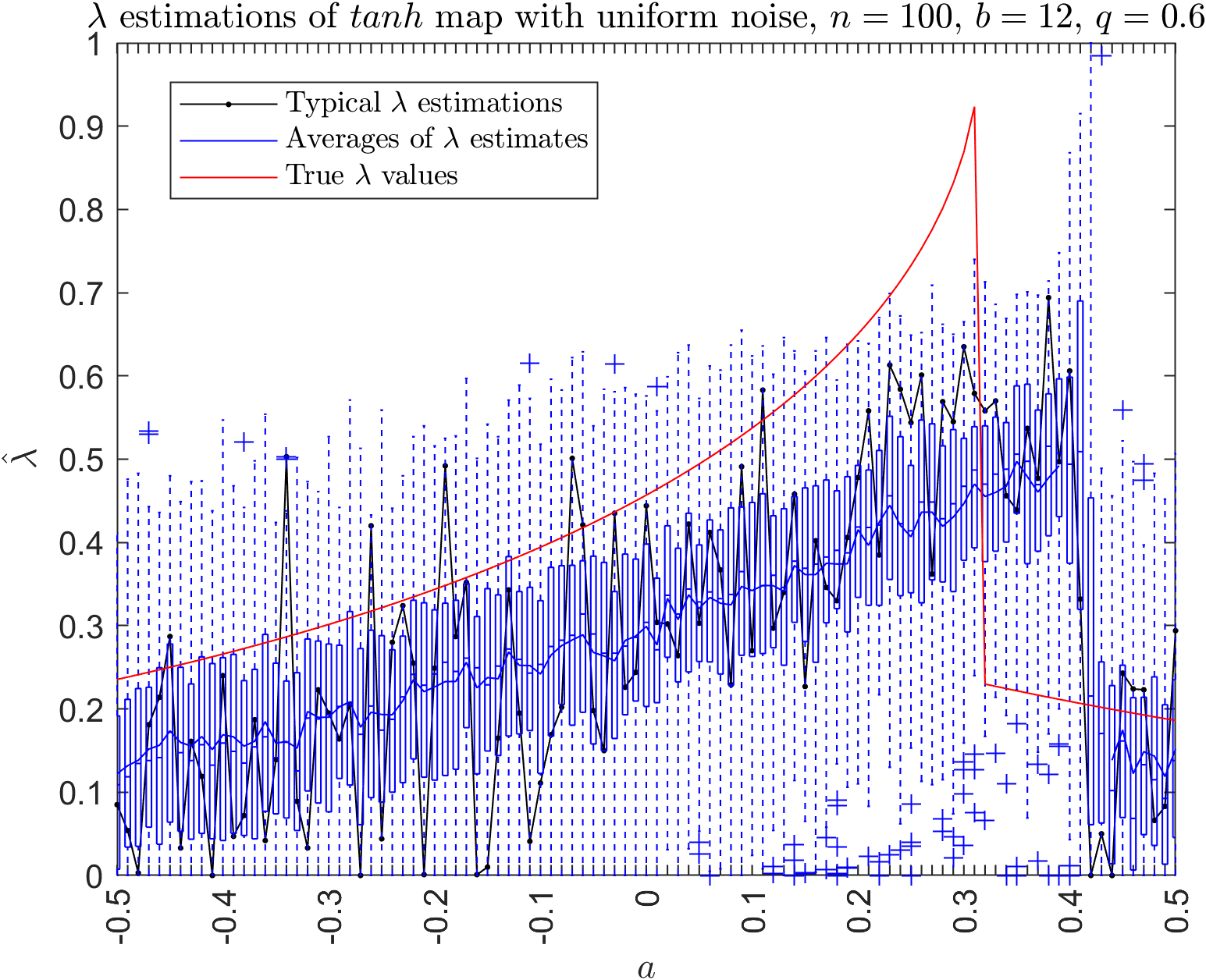}
\put(0,0){
(c)
}
\end{overpic}
\begin{overpic}[width=.49\textwidth]{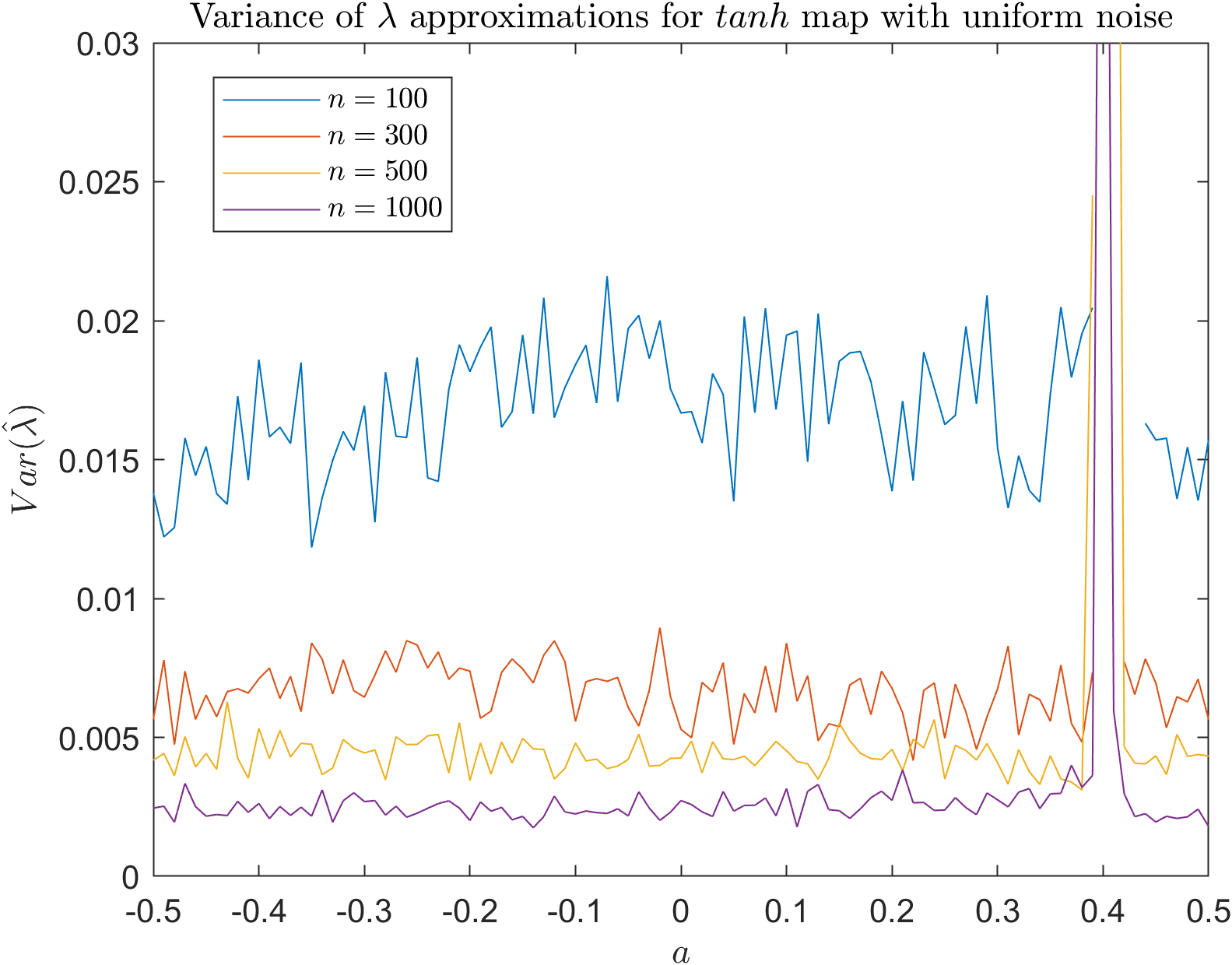}
\put(0,0){
(d)
}
\end{overpic}

\caption{Estimation of $\lambda$ from $n = 500$, $n = 300$ or $n = 100$ data points respectively. All estimations utilise $b = 12$ histogram bins and $q = 0.6$ quantile of data points. The results for 100 different sets of data points are plotted in (a), (b) and (c) respectively. The accuracy of the approximations decreases for a smaller number of data $n$ while the variance of $\lambda$ approximations for different sample data increases significantly as shown in (d).}
\label{fig:data100_300_500}
\end{figure}

To further test the reliability of our approach with small data sizes,
we repeat the experiments with lower values of $n$, namely $n = 500, n = 300$ and $n = 100$, using the same hyperparameters $b = 12$ and $q = 0.6$. The corresponding results are shown in Figures~\ref{fig:data100_300_500}(a), \ref{fig:data100_300_500}(b), and \ref{fig:data100_300_500}(c), where we include a typical $\lambda$ approximation for a specific noise realisation in each case. 
As expected, the accuracy of $\lambda$ estimations decreases with smaller $n$, and the variance of $\hat{\lambda}$ across different noise realisations increases significantly with decreasing $n$, as illustrated in Figure~\ref{fig:data100_300_500}(d).

Nevertheless, even with these low data points, the upward trend in the average estimations, represented by the blue line plot in Figure~\ref{fig:data100_300_500}(c), can still be observed and may provide an early warning signal for a bifurcation.

\section{Discussion on approximation errors}\label{APPENDIX:error_discussion}
\subsection{True boundary versus estimated boundary}
The quadratic function used for fitting in (\ref{eq:quad_method}) has the form $\psi(y)=a_2y^2 + a_1y$ for $y = \log(x-x_-)$, which admits two roots: $0$ and $r:=-a_1/a_2$. The points $x$, from the histogram tail for solving the optimisation (\ref{eq:quad_method}), must lie sufficiently close to the boundary $x_-$ so that 
\begin{equation}\label{eq:opt_cond}
    \log(x-\hat{x}_-) < \log(x-x_-) < 0.
\end{equation}
The coefficients $a_1$ and $a_2$ are determined by the root $r$ and the gradient of $\psi$ at $r$, denoted by $g=f'(r)=-a_1$. On the one hand, $a_2=g/r$, and on the other hand $a_2 = \tfrac{1}{2\log \lambda}$. Combining these gives the estimator $\hat{\lambda}_{\hat{x}_-} = \exp\left(\frac{r}{2g}\right)$.

\begin{figure}[!b]
    \centering
    \includegraphics[width=.5\textwidth]{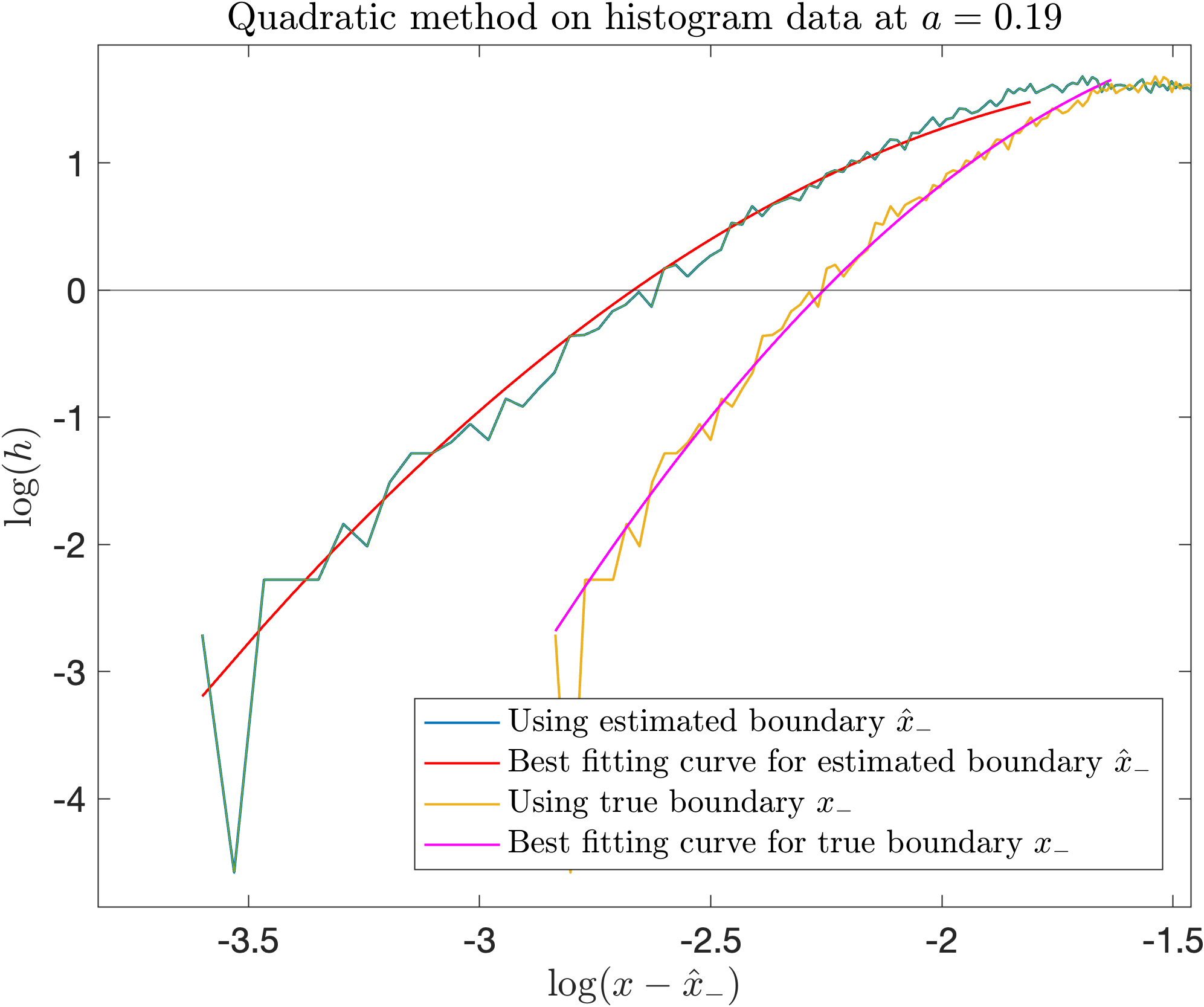}
    \caption{Comparing the best fitting curve where $x_-$ is the true boundary and estimated boundary from histogram data respectively, for $a = 0.19$ in the nonlinear example (\ref{eq:non-linear_eq}) with uniform noise. The negative root of the best-fitting quadratic curve and the gradient at the root both decrease when the boundary is estimated.}
    \label{fig:compare_known_unknown}
\end{figure}

We assume that the best fitting quadratic $\psi$ has a negative root, which holds whenever (\ref{eq:opt_cond}) is satisfied. Consider the same histogram data $(x_i)_{1\leq i\leq b_l}$ used for fitting for the leading-order method (\ref{eq:quad_method}). We distinguish between two cases as in the main text: (i) the true boundary $\hat{x}_- = x_-$ is known, and (ii) an estimated boundary $\hat{x}_- > x_-$. Denote $r(x_-), g(x_-)$ as the negative root and gradient at the root when using the true boundary, and $r(\hat{x}_-), g(\hat{x}_-)$ for the estimated boundary. From Figure~\ref{fig:compare_known_unknown}, we observe that the value of root decreases when we use the estimated boundary $\hat{x}_-$ compared to the true boundary $x_-$, and the gradient decreases i.e.

\begin{displaymath}r(\hat{x}_-) < r(x_-)<0, \text{and}\; 0<g(\hat{x}_-) < g(x_-).
\end{displaymath}

Consequently, the estimated $\lambda$ decreases, i.e. $\hat{\lambda}_{\hat{x}_-} < \hat{\lambda}_{x_-}$. We give an intuitive justification for this observation. Since the histogram data $(h_i,x_i)_{1\leq i \leq b_l}$ are the same but $\log(x_i-\hat{x}_-)<\log(x_i-x_-)$ for the same value of $\log(h_i)$ for all $1\leq i \leq b_l$, where the corresponding graph is shifted leftwards when we switch from true boundary $x_-$ to estimated boundary $\hat{x}_-$. This explains why the root decreases i.e. $r(\hat{x}_-) < r(x_-)$. 

Moreover, the shift is smaller for larger values of $x_i$, as we are in the logarithm scale, i.e. 
\begin{displaymath}
    0< \log(x_i-x_-)- \log(x_i-\hat{x}_-) < \log(x_{i+1}-x_-)- \log(x_{i+1}-\hat{x}_-).
\end{displaymath}
Hence, the data points above the horizontal line ($\log(h) = 0$) shift less than those below it. This leads to a flatter fitted curve and hence a smaller gradient: $g(\hat{x}_-) < g(x_-)$. The arguments for the higher-order method are not as clear, since the candidate function is not quadratic, but numerical experiments suggest a qualitatively similar behaviour.

\begin{Remark}
    The reasoning above is heuristic rather than rigorous. It provides intuition for the observation that the estimation $\hat{\lambda}$ decreases when the estimated boundary $\hat{x}_-$ is used instead of the true boundary $x_-$. A formal proof would require a detailed study of the optimisation problem in (\ref{eq:quad_method}), which can be tedious, and counterexamples may exist in pathological cases.
\end{Remark}

\subsection{Changes of $\hat{\lambda}$ estimations versus different boundary estimates}
We investigate how inaccuracies in boundary estimation affect the $\lambda$ estimations. Recall that since the boundary $x_-$ is unknown, we estimate it by taking the midpoint of an empty bin on the left end of the histogram, denoted as $\hat{x}_-$. For a general estimated boundary $\hat{x}'_- \in [x_-, \hat{x}_-]$, we denote the corresponding estimator by $\hat{\lambda}_{\hat{x}'_-}$. 
We compute the difference of estimators, $\hat{\lambda}_{x_-} - \hat{\lambda}_{\hat{x}'_-}$ for different boundary estimations, illustrated in Figure~\ref{fig:boundary_linear_estimate} as a function of the boundary errors $\hat{x}'_--x_-$ on a $\log-\log$ scale, for both methods (\ref{eq:quad_method}) and (\ref{eq:quad_method_strict}). We observe that the estimates $\hat{\lambda}_{\hat{x}'_-}$ decrease (i.e. the values of $\hat{\lambda}_{x_-} - \hat{\lambda}_{\hat{x}'_-}$ increase), as the errors of the boundary estimation $\hat{x}'_- - x_-$ increase.

More preceisely, Figure~\ref{fig:boundary_linear_estimate}, exhibits a linear relationship between between $\log(\hat{\lambda}_{x_-}-\hat{\lambda}_{\hat{x}'_-})$ and $\log(\hat{x}'_- -x_-)$, providing thus an indirect almost-affine relationship between $\hat{\lambda}_{x_-}-\hat{\lambda}_{\hat{x}'_-}$ and $x_--\hat{x}'_-$. Indeed, since $\log(\hat{\lambda}_{x_-}-\hat{\lambda}_{\hat{x}'_-})\approx \alpha + m\log(x_--\hat{x}'_-)$, for some constant $\alpha$ and $m\approx 1$, this implies that $\hat{\lambda}_{x_-}-\hat{\lambda}_{\hat{x}'_-} \approx A(x_--\hat{x}'_-)^m$, for some $A = \exp(\alpha)>0$. By adding and subtracting $\lambda$ on the left-hand side, we obtain that
\[
    \hat{\lambda}_{\hat{x}'_-}-\lambda \approx (\hat{\lambda}_{x_-}-\lambda) - A(x_--\hat{x}'_-)^m.
\]
This highlights that the constant $A$ governs the sensitivity of the estimator to boundary errors. While $A$ clearly depends on $\lambda$, it may also be influenced by higher-order terms or system-specific characteristics.

We repeat this procedure for the leading-order method on the nonlinear map with uniform noise as shown in Figure~\ref{fig:boundary_non_linear_estimate}(a) and the higher-order method on the nonlinear map with truncated normal noise illustrated in Figure~\ref{fig:boundary_non_linear_estimate}(b). Both cases reproduce the linear trend already seen in the linear example.

\begin{Remark}
    In addition to the boundary estimations, another factor that can affect the accuracy of the $\hat{\lambda}$ estimation is the regression error arising from the optimisation problems in \eqref{eq:quad_method} and \eqref{eq:quad_method_strict}. Understanding the role of these factors in greater depth is of practical interest, but beyond the scope of this paper.
\end{Remark}

\begin{figure}[!htbp]
    \centering
    \begin{overpic}[width=.49\textwidth]{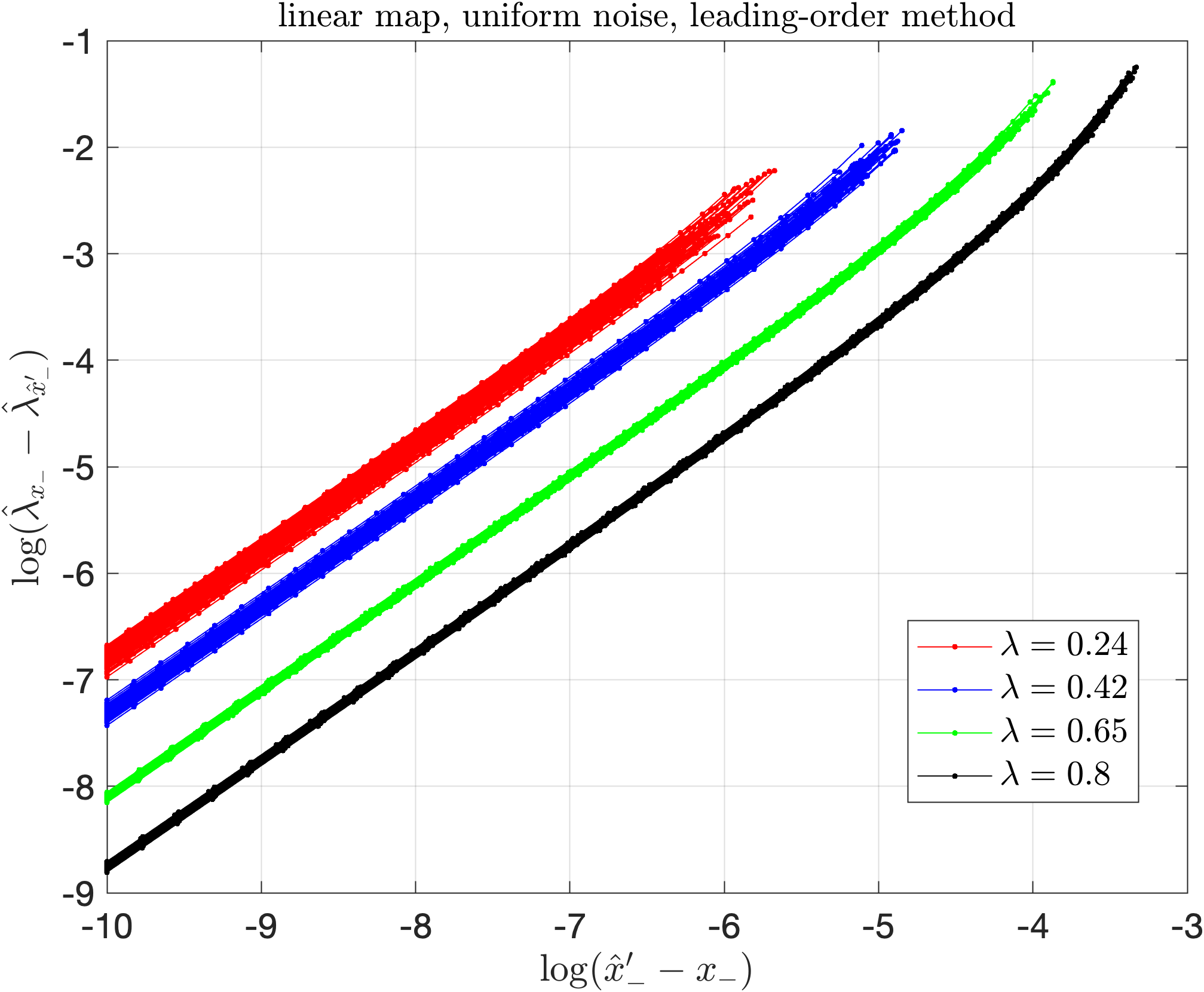}
    \put(0,0){
    (a)
    }
    \end{overpic}
    \begin{overpic}[width=.49\textwidth]{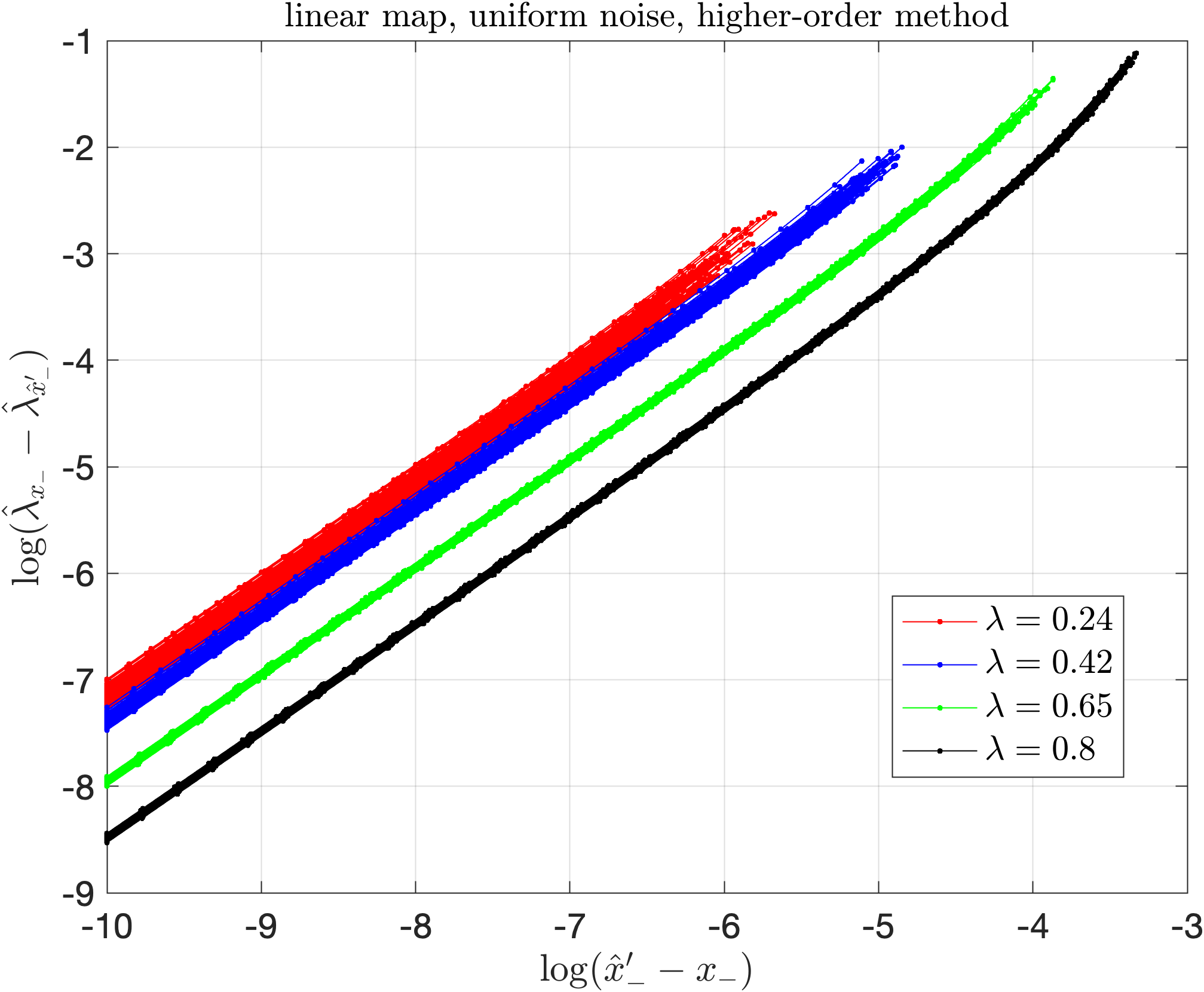}
    \put(0,0){
    (b)
    }
    \end{overpic}
    \caption{The differences of estimations, $\hat{\lambda}_{x_-} - \hat{\lambda}_{\hat{x}'_-}$, where $\hat{\lambda}_{x_-}$ is the estimation using the true boundary $x_-$, against the distance of estimated boundary $\hat{x}'_-$ and $x_-$ for 100 distinct noise realisations on the logarithm scale. They correspond to four distinct $\lambda$ values, using the leading-order method (\ref{eq:quad_method}) in (a) and the higher-order method (\ref{eq:quad_method_strict}) in (b). The average slopes for (a) are 1.03, 1.04, 1.06, 1.10, while for (b), the slopes are 1.02, 1.03, 1.05, 1.08 for $\lambda = 0.24, 0.42, 0.65, 0.8$ respectively. This indicates a linear relation with slope one.}
    \label{fig:boundary_linear_estimate}
\begin{overpic}[width=.49\textwidth]{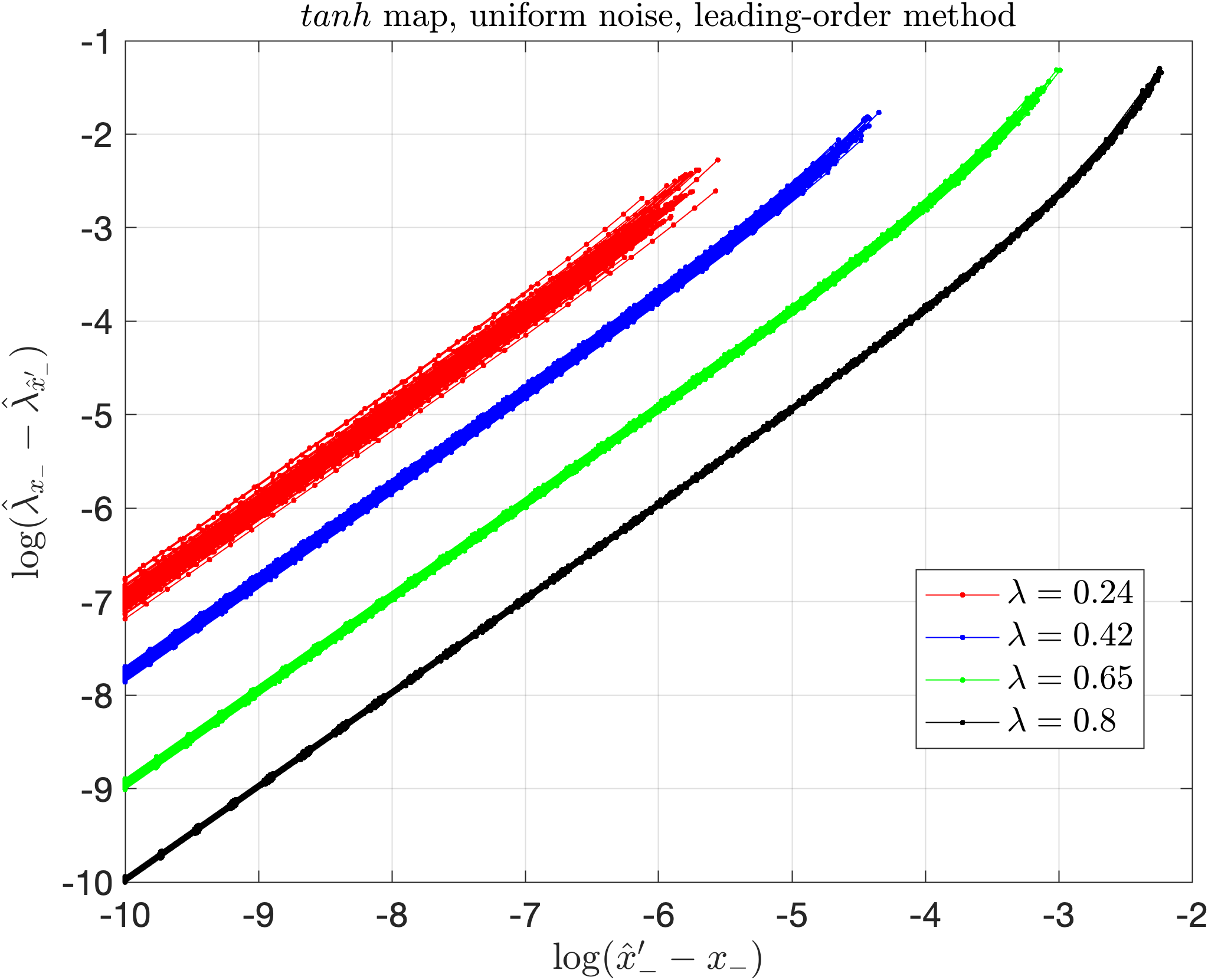}
    \put(0,0){
    (a)
    }
    \end{overpic}
    \begin{overpic}[width=.49\textwidth]{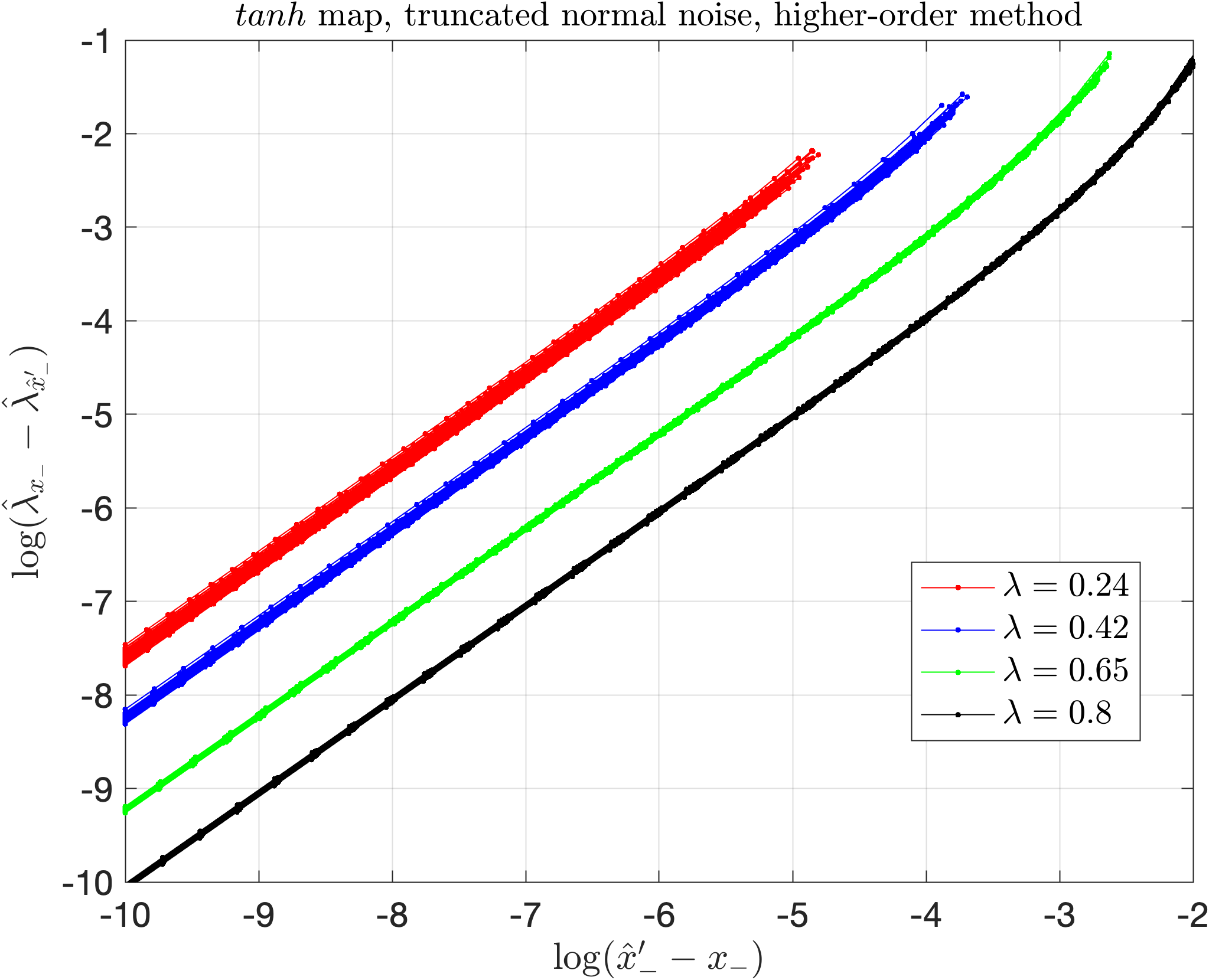}
    \put(0,0){
    (b)
    }
    \end{overpic}
    \caption{Illustration of the differences of estimations $\hat{\lambda}-\hat{\lambda}_{x_-}$, against the boundary discrepancy $\hat{x}_- - x_-$, in the logarithm scale for 100 different noise realisations, for different values of $a$ where $\lambda = 0.24, 0.42, 0.65, 0.8$, in the nonlinear map with bounded noise (\ref{eq:non-linear_eq}). In (a), the noise is uniform and we apply the leading-order method (\ref{eq:quad_method}), while in (b), truncated normal noise is taken and we apply the higher-order method (\ref{eq:quad_method_strict}). The average slopes for (a) are 1.03, 1.04, 1.07, 1.09, while for (b), the slopes are 1.02, 1.04, 1.07, 1.10 for $\lambda = 0.24, 0.42, 0.65, 0.8$ respectively. Similar to the linear case, this indicates a linear relation with slope one.}
    \label{fig:boundary_non_linear_estimate}
\end{figure}

\end{document}